\newtheorem{theorem}{Theorem}
\newtheorem{definition}[theorem]{Definition}
\newtheorem{lemma}[theorem]{Lemma}
\newtheorem{corollary}[theorem]{Corollary}
\newtheorem{proposition}[theorem]{Proposition}
\newtheorem{remark}[theorem]{Remark}
\newcommand{\R}{\mathbb R}
\numberwithin{theorem}{section}
\numberwithin{equation}{section}
\title[Quasi-local energy with respect to de Sitter/anti-de Sitter reference]{Quasi-local energy with respect to \\ de Sitter/anti-de Sitter reference}
\author{Po-Ning Chen, Mu-Tao Wang, and Shing-Tung Yau}
\date{\today}
\thanks{P.-N. Chen is supported by NSF grant DMS-1308164, M.-T. Wang is supported by NSF grants DMS-1105483 and DMS-1405152,  and S.-T. Yau is supported by NSF
grants  PHY-0714648 and DMS-1308244. This work was partially supported by a grant from the Simons Foundation (\#305519 to Mu-Tao Wang). Part of this work was carried out
when P.-N. Chen and M.-T. Wang were visiting the Department of Mathematics and the Center of Mathematical Sciences and Applications at Harvard  University.} 
\begin{document}
\maketitle
\begin{abstract}
This article considers the quasi-local conserved quantities with respect to a reference spacetime with a cosmological constant. We follow the approach developed by the authors in \cite{Wang-Yau1,Wang-Yau2,Chen-Wang-Yau3} and define the quasi-local energy as differences of  surface Hamiltonians. The ground state for the gravitational energy is taken to be a reference configuration in  the de Sitter (dS) or Anti-de Sitter (AdS) spacetime. This defines the quasi-local energy with respect to the reference spacetime and generalizes our previous definition with respect to the Minkowski spacetime.
Through an optimal isometric embedding into the reference spacetime, the Killing fields of the reference spacetime are transplanted back to the surface in the physical spacetime to complete the definitions of quasi-local conserved quantities.
We  also compute how the corresponding total conserved quantities evolve under the Einstein equation with a  cosmological constant.
\end{abstract}

\section{Introduction} In \cite{Wang-Yau1, Wang-Yau2, Chen-Wang-Yau3}, the authors developed the theory of quasi-local energy (mass) and quasi-local conserved quantities in general relativity with respect to the Minkowski spacetime reference. In view of recent astronomical observations, the current article embarks on the study of the corresponding theory with respect to a reference spacetime with a non-zero cosmological constant. In particular, the quasi-local energy and quasi-local conserved quantities with respect to the dS or AdS spacetime are defined in this article. The construction, similar to the Minkowski
reference case, is based on the Hamilton-Jacobi analysis of the gravitational action and optimal isometric embeddings.
However, the result, not only is more complicated, but also reveals new phenomenon due to the nonlinear nature of the reference spacetime. The construction employs ideas developed by the authors in \cite{Wang-Yau07} (see also \cite{Shi-Tam}) for quasi-local mass with respect to the hyperbolic reference.

In the following, we review the definition of the quasi-local energy-momentum in  \cite{Wang-Yau1,Wang-Yau2} with respect to the Minkowski spacetime. The main motivation of this definition is the rigidity property that surfaces in the Minkowski spacetime should have zero mass. As a result, all possible isometric embeddings $X$ of the surface into $\R^{3,1}$ are considered and an energy is assigned to each pair $(X, T_0)$ of an isometric embedding $X$ and a constant future timelike unit vector field $T_0$  in $\R^{3,1}$.

Let $\Sigma$ be a closed embedded spacelike 2-surface in a spacetime. We assume the mean curvature vector $H$ of $\Sigma$  is spacelike and the normal bundle of $\Sigma$ is oriented. The data used in the definition of the Wang-Yau quasi-local mass is the triple $(\sigma,|H|,\alpha_H)$, in which $\sigma$ is the induced metric on $\Sigma$, $|H|$ is the norm of the mean curvature vector and $\alpha_H$ is the connection one-form of the normal bundle with respect to the mean curvature vector
\[ \alpha_H(\cdot )=\langle \nabla^N_{(\cdot)}   \frac{J}{|H|}, \frac{H}{|H|}  \rangle  \]
where $J$ is the reflection of $H$ through the incoming  light cone in the normal bundle.

Given an isometric embedding $X:\Sigma\rightarrow \R^{3,1}$ and a constant future timelike unit vector field $T_0$  in $\R^{3,1}$, let $\widehat{X}$ be the projection of ${X}$ onto the orthogonal complement of $T_0$. We denote the induced metric, the second fundamental form, and the mean curvature of the image by $\hat{\sigma}_{ab}$, $\hat{h}_{ab}$, and $\widehat{H}$, respectively.  The Wang-Yau quasi-local energy with respect to $(X, T_0)$ is 
\[\begin{split}&E(\Sigma, X, T_0)=\frac{1}{8\pi} \int_{\widehat{\Sigma}}\widehat{H}d{\widehat{\Sigma}}-\frac{1}{8\pi}\int_\Sigma \left[\sqrt{1+|\nabla\tau|^2}\cosh\theta|{H}|-\nabla\tau\cdot \nabla \theta -\alpha_H ( \nabla \tau) \right]d\Sigma,\end{split}\] where $\theta=\sinh^{-1}(\frac{-\Delta\tau}{|H|\sqrt{1+|\nabla\tau|^2}})$, $\nabla$ and $\Delta$ are the gradient and Laplacian, respectively, with respect to $\sigma$ and $\tau= - \langle X ,T_0 \rangle$ is the time function.

In \cite{Wang-Yau1,Wang-Yau2}, it is proved that, $E(\Sigma, X, T_0) \ge 0$ if $\Sigma$ bounds a spacelike hypersurface in $N$, the dominant energy condition holds in $N$, and the pair $(X,T_0)$ is admissible. The Wang-Yau quasi-local mass is defined to be the minimum  of the quasi-local energy $E(\Sigma, X, T_0)$ among all admissible pairs $(X, T_0)$. In particular, for a surface in the Minkowski spacetime, its Wang-Yau mass is zero. However, for surfaces  in a general spacetime, it is not clear which isometric embedding would minimize the quasi-local energy. To find the isometric embedding that minimizes the quasi-local energy, we study the Euler-Lagrange equation for the critical point of the Wang-Yau energy. It is the following fourth order nonlinear elliptic equation (as an equation for $\tau$)

\begin{equation} \label{optimal}
 -(\widehat{H}\hat{\sigma}^{ab} -\hat{\sigma}^{ac} \hat{\sigma}^{bd} \hat{h}_{cd})\frac{\nabla_b\nabla_a \tau}{\sqrt{1+|\nabla\tau|^2}}+ div_\sigma (\frac{\nabla\tau}{\sqrt{1+|\nabla\tau|^2}} \cosh\theta|{H}|-\nabla\theta-\alpha_{H})=0
\end{equation}
coupled with the isometric embedding equation for $X$. \eqref{optimal} is referred to as the optimal isometric embedding equation.

The data for the image surface of the isometric embedding $X$ in the Minkowski spacetime can be used to simplify the expressions for the quasi-local energy and the optimal isometric embedding equation. Denote the norm of the mean curvature vector and the connection one-form in mean curvature gauge of $X(\Sigma)$ in $\R^{3,1}$ by $|H_0|$ and $\alpha_{H_0}$, respectively. Let $\theta_0=\sinh^{-1}(\frac{-\Delta\tau}{|H_0|\sqrt{1+|\nabla\tau|^2}})$. We have the following identities relating the geometry of the image of the isometric embedding $X$ and 
the image surface $\widehat{\Sigma}$ of $\widehat{X}$  \cite{Chen-Wang-Yau2}.
\[\sqrt{1+|\nabla\tau|^2}\widehat{H} =\sqrt{1+|\nabla\tau|^2}\cosh\theta_0|{H_0}|-\nabla\tau\cdot \nabla \theta_0 -\alpha_{H_0} ( \nabla \tau) \]
\[   -(\widehat{H}\hat{\sigma}^{ab} -\hat{\sigma}^{ac} \hat{\sigma}^{bd} \hat{h}_{cd})\frac{\nabla_b\nabla_a \tau}{\sqrt{1+|\nabla\tau|^2}}+ div_\sigma (\frac{\nabla\tau}{\sqrt{1+|\nabla\tau|^2}} \cosh\theta_0|{H_0}|-\nabla\theta_0-\alpha_{H_0})=0.
\]
The second identity states that a surface inside $\R^{3,1}$ is a critical point of the quasi-local energy with respect to other isometric embeddings back to $\R^{3,1}$. This can be proved by either the positivity of the quasi-local energy or a direct computation. We  substitute these relations into the expression for $E(\Sigma, X, T_0)$ and the optimal isometric embedding equation, and rewrite them in terms of a function $\rho$ and a one-form $j$ with

\[
\begin{split}
\rho &= \frac{\sqrt{|H_0|^2 +\frac{(\Delta \tau)^2}{1+ |\nabla \tau|^2}} - \sqrt{|H|^2 +\frac{(\Delta \tau)^2}{1+ |\nabla \tau|^2}} }{ \sqrt{1+ |\nabla \tau|^2}},
\\ 
j_a & =\rho {\nabla_a \tau }- \nabla_a [ \sinh^{-1} (\frac{\rho\Delta \tau }{|H_0||H|})]-(\alpha_{H_0})_a + (\alpha_{H})_a.\end{split}
\]

In terms of these, the quasi-local energy is 
\[E(\Sigma,X,T_0)=\frac{1}{8\pi}\int_\Sigma (\rho+j_a\nabla^a\tau)\]
and a pair $(X, T_0)$  of an embedding $X:\Sigma\hookrightarrow \mathbb{R}^{3,1}$ and an observer $T_0$ satisfies the optimal isometric embedding equation \eqref{optimal} if $X$ is an isometric embedding and 
\[
div_{\sigma} j=0.
\] 

In \cite{Chen-Wang-Yau3}, the quasi-local conserved quantity of $\Sigma$ with respect to a pair $(X, T_0)$ of optimal isometric embedding and a Killing field $K$ is defined to be
\[
E(\Sigma, X, T_0, K)=-\frac{1}{8\pi} \int_\Sigma
\left[ \langle K, T_0\rangle \rho+j(K^\top) \right]d\Sigma
\]
where $K^\top$ is the tangential part of $K$ to $X(\Sigma)$.

The article is organized as follows: in Section 2, we gather results for the geometry of surfaces in the reference spacetime (dS or AdS). In Section 3, we derive a conservation law for surfaces in the reference spacetime. The conservation law is used in Section 4 to define the quasi-local energy. In Section 5, the first variation of the quasi-local energy is derived. In Section 6, the second variation of the quasi-local energy is computed and we prove that a surface in the static slice of the reference spacetime is a local minimum of its own quasi-local energy. In Section 7, we define the quasi-local conserved quantities and evaluate their limits for an asymptotically AdS initial data, and compute how these conserved quantities evolve under the Einstein equation with a  cosmological constant. 
\section{Geometry of surfaces in the reference spacetime }
In this section, we gather results for the geometry of surfaces in the reference spacetime, which refers to the dS spacetime or AdS spacetime throughout the article. In a static chart $(t, x^1, x^2, x^3)$ of the reference spacetime, the metric is of the form
\begin{equation}\label{metric_form}
\check{g}= -\Omega^2 dt^2 +g_{ij} dx^i dx^j
\end{equation}
where $g_{ij}$ is the hyperbolic metric for the AdS spacetime, or the round metric on $S^3$ for the dS spacetime, and  $\Omega$ is the corresponding static potential. The metric is normalized such that the scalar curvature of $g_{ij}$ is $6 \kappa$ where $\kappa $ is  $1$ or $-1$. Denote the covariant derivative of the static slice by $\bar \nabla$ and that of the reference spacetime by $D$.

The static equation reads:
\[   (-\bar \Delta \Omega) g_{ij} + \bar \nabla_i \bar \nabla_j \Omega  -\Omega Ric_{ij}=0\]
where $Ric_{ij}$ is the Ricci curvature of the metric $g_{ij}$. In our case, a static slice is a space form and $g_{ij}$ and $\Omega$ satisfy
\begin{equation}  \label{static_potential_hessian}
\bar \nabla^2 \Omega  = -\kappa \Omega g. \end{equation}

Consider a surface $\Sigma$ in  the reference spacetime defined by an embedding $X$ from an abstract surface $\Sigma_0$ into the reference spacetime. In the static chart, we denote the components of $X$ by $(\tau,X^1,X^2,X^3)$ and refer to $\tau$ as the time function. Let $\sigma$ be the induced metric on $\Sigma$,  $H_0$ be the mean curvature vector of $\Sigma$  and $J_0$ be the reflection of $H_0$ through the light cone in the normal bundle of $\Sigma$. Denote the covariant derivative with respect to the induced metric $\sigma$ by $\nabla$. 

Given an orthonormal frame $\{ e_3,e_4\}$ of the normal bundle of $\Sigma$ in the reference spacetime where $e_3$ is spacelike and $e_4$ is future timelike, we define the connection one-form associated to the frame 
\begin{equation}\label{alpha_3}  \alpha_{e_3} (\cdot)=\langle D_{(\cdot)}  e_3, e_4 \rangle.\end{equation}
We assume the mean curvature vector of $\Sigma$ is spacelike and consider the following connection one-form of $\Sigma$ in the mean curvature gauge
\begin{equation}\label{alpha_h} \alpha_{H_0}(\cdot )=\langle D_{(\cdot)}   \frac{J_0}{|H_0|}, \frac{H_0}{|H_0|}   \rangle,  \end{equation}
where $J_0$ is the reflection of $H_0$ through the incoming  light cone in the normal bundle.

Let $\widehat \Sigma$ be the surface in the static slice $t=0$ given by $\widehat X=(0,X^1,X^2,X^3)$ which is assumed to be an embedding. The surfaces $\Sigma$ and $\widehat\Sigma$ are canonically diffeomorphic. Let $\hat \sigma$ be the induced metric on $\widehat \Sigma$, and $\widehat H$  and $\hat h_{ab}$ be the mean curvature and second fundamental form of $\widehat \Sigma$ in the static slice, respectively. Denote the covariant derivative with respect to the metric $\hat \sigma$ by $\hat \nabla$. 

The Killing vector field $\frac{\partial}{\partial t} $ generates a one-parameter family of isometries $\phi_t$ of the reference spacetime and we have from the form of the metric \eqref{metric_form}
\begin{equation}\label{christoffel} D_{\frac{\partial}{\partial t}} \frac{\partial}{\partial x^i}=\frac{\partial (\log \Omega)}{\partial x^i}\frac{\partial}{\partial t}.\end{equation}
Let $C$ be the image of $\Sigma$ under the one-parameter family $\phi_t$. The intersection of $C$ with the static slice $t=0$ is $\widehat \Sigma$. By a slight abuse of terminology, we refer to $\widehat \Sigma$ as the projection of $\Sigma$. We consider the following two vector fields on $C$. Let $v^a$ be a coordinate system on $\Sigma_0$ and consider the pushforward, $\widehat X_*(\frac{\partial}{\partial v^a})$, of $\frac{\partial}{\partial v^a}$ to $\widehat \Sigma$ by the embedding $\widehat X$. The pushforward of $\widehat X_*(\frac{\partial}{\partial v^a})$ to $C$ by the one-parameter family $\phi_t$ gives a vector field, still denoted by $\widehat X_*(\frac{\partial}{\partial v^a})$ on $C$. $\widehat X_*(\frac{\partial}{\partial v^a})$ is perpendicular to $\frac{\partial}{\partial t}$ everywhere on $C$. Similarly, we consider the pushforward of $X_*(\frac{\partial}{\partial v^a})$ to $C$ by the one-parameter family $\phi_t$.

The function $\tau$ can be viewed as a function on $\Sigma_0$ as well. $\tau_a=\frac{\partial \tau}{\partial v^a}$ is a one-form that lives on $\Sigma_0$, as well as $\Sigma$ and $\widehat \Sigma$, through the canonical diffeomorphism. 

 As tangent vector fields on $C$, we have
\begin{equation} X_*(\frac{\partial}{\partial v^a})=\widehat X_*(\frac{\partial}{\partial v^a}) + \tau_a  \frac {\partial}{\partial t}.  \end{equation}

Finally, let $\breve e_3$ be the outward unit normal of $\widehat \Sigma$ in the static slice $t=0$. Consider the pushforward of $\breve e_3$ by the one-parameter family  $\phi_t$, which is denoted by $\breve e_3$ again. Let $\breve e_4$ be the future directed unit normal of $\Sigma$ normal to $\breve e_3$ and extend it along $C$ in the same manner.  In particular, $X_*(\frac{\partial}{\partial v^a})$ is perpendicular to $\breve e_3$ and $\breve e_4$, and $\widehat X_*(\frac{\partial}{\partial v^a})$ is perpendicular to $\breve e_3$ and $\frac{\partial}{\partial t}$. 

We derive the formulae for comparing various geometric quantities on $\Sigma$ and $\widehat \Sigma$ in the remaining part of this section. Denote $\nabla\tau=\sigma^{ab}\tau_a\frac{\partial}{\partial v^b}$ and $\hat{\nabla}\tau= \hat \sigma^{ab}\tau_a\frac{\partial}{\partial v^b}$, which are identified with the corresponding tangent vector fields on $\Sigma$ and $\widehat{\Sigma}$, respectively.

We consider  $\sigma$ and $\hat \sigma$ as two Riemannian metrics on $\Sigma_0$, which are related as follows:
\begin{align}
\label{relation_lower_metric}\sigma_{ab} =& \hat \sigma_{ab} - \Omega^2 \tau_a \tau_b\\
\label{relation_upper_metric} \sigma^{ab} =& \hat \sigma^{ab} +\frac{ \Omega^2 \hat \nabla^a \tau  \hat \nabla^b  \tau}{1-\Omega^2|\hat \nabla \tau|^2}.
\end{align}
On $\Sigma_0$, $\nabla\tau$ and $\hat\nabla\tau$ are related as follows:
\begin{equation}
\label{relation_gradient}\nabla ^a \tau = \frac{\hat \nabla^a \tau}{1-\Omega^2|\hat \nabla \tau|^2}.
\end{equation}
This follows from a direct computation using equation \eqref{relation_lower_metric} and \eqref{relation_upper_metric}.

From \eqref{relation_gradient}, we derive
\begin{equation}\label{relation_area}
(1-\Omega^2|\hat \nabla \tau|^2)(1+\Omega^2| \nabla \tau|^2)=1.
\end{equation}

As before, we can extend $\nabla \tau$ and $\hat\nabla\tau$ along $C$. Along $C$,
\begin{equation}\label{grad_tau_C}\nabla\tau=(\nabla^a \tau) X_*(\frac{\partial}{\partial v^a}) \text{  and  } \hat \nabla\tau=(\hat \nabla^a \tau) \hat X_*(\frac{\partial}{\partial v^a}).\end{equation}
Note that along $C$, $\nabla \tau$ is perpendicular to $\breve{e}_3$ and $\breve{e}_4$ and $\hat\nabla\tau$ is perpendicular to $\breve{e}_3$ and $\frac{\partial}{\partial t}$. 
The following lemma expresses $\breve e_4$ and $ \frac {\partial}{\partial t}$ along $C$ in terms of each other.

\begin{lemma}
Along $C$, we have
\begin{align}\label{decompose_4} \breve e_4  = & \sqrt{1+\Omega^2| \nabla \tau|^2} \left(  \frac{\frac {\partial}{\partial t}}{\Omega} +  \Omega \hat \nabla \tau\right)\\
\label{decompose_t}  \frac {\partial}{\partial t} =& \Omega \sqrt{1+\Omega^2| \nabla \tau|^2} \breve e_4 - \Omega^2 \nabla \tau,
\end{align} where $\nabla\tau$ and $\hat\nabla\tau$ are given in \eqref{grad_tau_C}.
\end{lemma}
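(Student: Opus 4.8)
The plan is to pin down $\breve e_4$ along $C$ from its defining properties, read off \eqref{decompose_4}, and then obtain \eqref{decompose_t} by solving for $\frac{\partial}{\partial t}$ and converting $\hat\nabla\tau$ into $\nabla\tau$. For \eqref{decompose_4} I would set $V:=\frac{1}{\Omega}\frac{\partial}{\partial t}+\Omega\hat\nabla\tau$ and verify that $V$ is a future-directed timelike normal vector of $\Sigma$ orthogonal to $\breve e_3$; since the normal plane of the spacelike surface $\Sigma$ is Lorentzian and $\breve e_4$ is characterized as the future unit timelike normal orthogonal to $\breve e_3$, this identifies $V$ with a positive multiple of $\breve e_4$, and the multiple is fixed by computing $|V|$. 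Because $\phi_t$ is an isometry carrying $\{t=0\}$ to $\{t=\mathrm{const}\}$, both $\hat X_*(\frac{\partial}{\partial v^a})$ and $\breve e_3$ stay tangent to the static slices along $C$, hence orthogonal to $\frac{\partial}{\partial t}$; combining this with $\langle\frac{\partial}{\partial t},\frac{\partial}{\partial t}\rangle=-\Omega^2$ from \eqref{metric_form}, $\langle\hat X_*(\frac{\partial}{\partial v^a}),\hat X_*(\frac{\partial}{\partial v^b})\rangle=\hat\sigma_{ab}$, the stated relations $\langle\hat\nabla\tau,\breve e_3\rangle=0=\langle\hat\nabla\tau,\frac{\partial}{\partial t}\rangle$, and the splitting $X_*(\frac{\partial}{\partial v^a})=\hat X_*(\frac{\partial}{\partial v^a})+\tau_a\frac{\partial}{\partial t}$, one gets $\langle V,X_*(\frac{\partial}{\partial v^a})\rangle=-\Omega\tau_a+\Omega\tau_a=0$ and $\langle V,\breve e_3\rangle=0$, so $V$ is proportional to $\breve e_4$. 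Its norm is $\langle V,V\rangle=-1+\Omega^2|\hat\nabla\tau|^2$, which equals $-(1+\Omega^2|\nabla\tau|^2)^{-1}$ by \eqref{relation_area}, and $\langle V,\frac{\partial}{\partial t}\rangle=-\Omega<0$ shows $V$ lies in the future cone; therefore $\breve e_4=\sqrt{1+\Omega^2|\nabla\tau|^2}\,V$, which is \eqref{decompose_4}.

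For \eqref{decompose_t} I would first solve \eqref{decompose_4} for $\frac{\partial}{\partial t}$, obtaining $\frac{\partial}{\partial t}=\Omega(1+\Omega^2|\nabla\tau|^2)^{-1/2}\breve e_4-\Omega^2\hat\nabla\tau$, so the only task left is to replace $\hat\nabla\tau$ by $\nabla\tau$. Substituting $X_*(\frac{\partial}{\partial v^a})=\hat X_*(\frac{\partial}{\partial v^a})+\tau_a\frac{\partial}{\partial t}$ into $\nabla\tau=(\nabla^a\tau)X_*(\frac{\partial}{\partial v^a})$ and using \eqref{relation_gradient} gives $(1-\Omega^2|\hat\nabla\tau|^2)\nabla\tau=\hat\nabla\tau+|\hat\nabla\tau|^2\frac{\partial}{\partial t}$; feeding the resulting expression for $\hat\nabla\tau$ back into the previous display, collecting the $\frac{\partial}{\partial t}$ terms, and simplifying the scalar coefficients via \eqref{relation_area} (which turns $(1-\Omega^2|\hat\nabla\tau|^2)^{-1}(1+\Omega^2|\nabla\tau|^2)^{-1/2}$ into $(1+\Omega^2|\nabla\tau|^2)^{1/2}$) yields $\frac{\partial}{\partial t}=\Omega\sqrt{1+\Omega^2|\nabla\tau|^2}\,\breve e_4-\Omega^2\nabla\tau$. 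Alternatively, \eqref{decompose_t} can be checked directly: both sides have the same inner product with each of $X_*(\frac{\partial}{\partial v^a})$, $\breve e_3$, and $\breve e_4$ (the last using \eqref{decompose_4}), so they coincide by non-degeneracy of $\check g$.

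The one genuinely delicate point is the bookkeeping of the two gradients: $\nabla\tau$ and $\hat\nabla\tau$ are different vector fields along $C$ — tangent to $\Sigma$ and to $\widehat\Sigma$ respectively — and any passage between them, as well as between the scalars $1+\Omega^2|\nabla\tau|^2$ and $1-\Omega^2|\hat\nabla\tau|^2$, has to be routed through \eqref{relation_gradient}, \eqref{relation_area}, and the splitting $X_*=\hat X_*+\tau_a\frac{\partial}{\partial t}$. Once these are used consistently, every identity in the lemma reduces to a one-line expansion of inner products in the frame adapted to the static chart.
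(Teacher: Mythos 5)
Your proposal is correct and follows essentially the paper's own argument: the paper likewise identifies $\frac{\partial}{\partial t}+\Omega^2\hat\nabla\tau$ (your $\Omega V$) as normal to $X_*(\frac{\partial}{\partial v^a})$ and $\breve e_3$, computes its length via \eqref{relation_area} to get \eqref{decompose_4}, and then obtains \eqref{decompose_t} by reading off $\langle\frac{\partial}{\partial t},\breve e_4\rangle$ and $\langle\frac{\partial}{\partial t},X_*(\frac{\partial}{\partial v^b})\rangle$ — exactly your ``alternative'' check in the adapted frame. Your primary route for \eqref{decompose_t} (solving \eqref{decompose_4} for $\frac{\partial}{\partial t}$ and converting $\hat\nabla\tau$ to $\nabla\tau$ via \eqref{relation_gradient} and \eqref{relation_area}) is an equivalent algebraic rearrangement and is also fine.
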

\begin{proof}
We first prove \eqref{decompose_4}. It is easy to see that $\frac {\partial}{\partial t} + \Omega^2 \hat \nabla \tau$ is normal to both $X_*(\frac{\partial}{\partial v^a})=\widehat X_*(\frac{\partial}{\partial v^a}) + \tau_a  \frac {\partial}{\partial t}$ and $\breve e_3$, and thus in the direction of $\breve e_4$. Moreover, its length is 
\[
\begin{split}
   \sqrt{\Omega^2  - \Omega^4 |\hat \nabla \tau|^2}
=&  \Omega \sqrt {1-\Omega^2|\hat \nabla \tau|^2 }= \frac{ \Omega}{\sqrt{1+\Omega^2| \nabla \tau|^2}}
\end{split}
\]
where  \eqref{relation_area} is used in the last equality. This proves  \eqref{decompose_4}. From \eqref{decompose_4}, we derive $\langle \frac{\partial}{\partial t}, \breve e_4\rangle=-\Omega\sqrt{1+\Omega^2 |\nabla\tau|^2}$.
This together with $\langle \frac{\partial}{\partial t}, X_*(\frac{\partial}{\partial v^b})\rangle=-\tau_b \Omega^2$ implies \eqref{decompose_t}.

\end{proof}
In the following proposition, we derive a formula relating the mean curvature $\widehat H$ of $\widehat \Sigma$ to geometric quantities on $\Sigma$. All geometric quantities on $\Sigma$ and $\hat{\Sigma}$ are extended along $C$ by the integral curve of $\frac{\partial}{\partial t}$. For $\Omega=1$, this reduces to equation (3.5) of \cite{Wang-Yau2}. 
\begin{proposition}\label{proposition_mean_curvature_projection}
Along $C$,
\begin{equation}\label{relation_mean_curvature}
\widehat H = -\langle H_0, \breve e_3 \rangle - \frac{\Omega}{ \sqrt{1+\Omega^2| \nabla \tau|^2}} \alpha_{\breve e_3}(\nabla \tau).
\end{equation}
\end{proposition}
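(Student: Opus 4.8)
The plan is to compute the two terms $\langle H_0,\breve e_3\rangle$ and $\alpha_{\breve e_3}(\nabla\tau)$ on the right-hand side of \eqref{relation_mean_curvature} separately, reduce each of them to $\breve e_3(\Omega)$ and to the single quantity $\langle D_{\hat\nabla\tau}\hat\nabla\tau,\breve e_3\rangle$ (the second fundamental form of $\widehat\Sigma$ contracted twice with $\hat\nabla\tau$), and then observe that these contributions cancel when the right-hand side is assembled. Since every object is extended along $C$ by the flow $\phi_t$ of $\frac{\partial}{\partial t}$, which is an isometry, it is enough to verify \eqref{relation_mean_curvature} pointwise on $C$, where the relations of the preceding Lemma together with \eqref{relation_upper_metric} and \eqref{relation_area} are available. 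I will also use $D_{\frac{\partial}{\partial t}}\frac{\partial}{\partial t}=\Omega\bar\nabla\Omega$, which follows from \eqref{christoffel} and $\langle\frac{\partial}{\partial t},\frac{\partial}{\partial t}\rangle=-\Omega^2$, and the fact that $\breve e_3$ is tangent to the static slices and orthogonal to both $\frac{\partial}{\partial t}$ and $\hat\nabla\tau$, so that $\langle\bar\nabla\Omega,\breve e_3\rangle=\breve e_3(\Omega)$. Throughout I abbreviate $X_a=X_*(\frac{\partial}{\partial v^a})$ and $\widehat X_a=\widehat X_*(\frac{\partial}{\partial v^a})$.

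\emph{Step 1: the term $\langle H_0,\breve e_3\rangle$.} Starting from $\langle H_0,\breve e_3\rangle=\sigma^{ab}\langle D_{X_a}X_b,\breve e_3\rangle$ and substituting $X_a=\widehat X_a+\tau_a\frac{\partial}{\partial t}$, I would expand $\langle D_{X_a}X_b,\breve e_3\rangle$. By \eqref{christoffel} every resulting term is a multiple of $\frac{\partial}{\partial t}$, hence annihilated by pairing with $\breve e_3$, apart from $\langle D_{\widehat X_a}\widehat X_b,\breve e_3\rangle$ and $\tau_a\tau_b\langle D_{\frac{\partial}{\partial t}}\frac{\partial}{\partial t},\breve e_3\rangle=\tau_a\tau_b\,\Omega\,\breve e_3(\Omega)$. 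The first of these is the second fundamental form of $\widehat\Sigma$, whose $\hat\sigma$-trace is $-\widehat H$ (with the sign convention making $\widehat H>0$ for convex surfaces) and whose contraction with $\hat\nabla^a\tau\hat\nabla^b\tau$ is $\langle D_{\hat\nabla\tau}\hat\nabla\tau,\breve e_3\rangle$. Taking the $\sigma^{ab}$-trace, inserting \eqref{relation_upper_metric} for $\sigma^{ab}$, and replacing $(1-\Omega^2|\hat\nabla\tau|^2)^{-1}$ by $1+\Omega^2|\nabla\tau|^2$ via \eqref{relation_area}, I would obtain
\begin{equation*}
-\langle H_0,\breve e_3\rangle=\widehat H-\Omega^2(1+\Omega^2|\nabla\tau|^2)\,\langle D_{\hat\nabla\tau}\hat\nabla\tau,\breve e_3\rangle-\Omega|\nabla\tau|^2\,\breve e_3(\Omega).
\end{equation*}

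\emph{Step 2: the term $\alpha_{\breve e_3}(\nabla\tau)$, and conclusion.} Rewriting \eqref{decompose_t} as $\nabla\tau=\frac{\sqrt{1+\Omega^2|\nabla\tau|^2}}{\Omega}\,\breve e_4-\frac{1}{\Omega^2}\frac{\partial}{\partial t}$ and feeding it into $\alpha_{\breve e_3}(\nabla\tau)=\langle D_{\nabla\tau}\breve e_3,\breve e_4\rangle$ reduces the computation to $\langle D_{\frac{\partial}{\partial t}}\breve e_3,\breve e_4\rangle$ and $\langle D_{\breve e_4}\breve e_3,\breve e_4\rangle$. Expanding $\breve e_4$ through \eqref{decompose_4} as a combination of $\frac{\partial}{\partial t}$ and $\hat\nabla\tau$, moving $\breve e_3$ across each covariant derivative using $\langle\breve e_3,\frac{\partial}{\partial t}\rangle=\langle\breve e_3,\hat\nabla\tau\rangle=0$, and using $D_{\frac{\partial}{\partial t}}\frac{\partial}{\partial t}=\Omega\bar\nabla\Omega$ together with $D_{\widehat X_a}\frac{\partial}{\partial t}=(\partial_a\log\Omega)\frac{\partial}{\partial t}$ (again from \eqref{christoffel}), every surviving term is again expressible through $\breve e_3(\Omega)$ and $\langle D_{\hat\nabla\tau}\hat\nabla\tau,\breve e_3\rangle$; a final simplification with \eqref{relation_area} would give
\begin{equation*}
-\frac{\Omega}{\sqrt{1+\Omega^2|\nabla\tau|^2}}\,\alpha_{\breve e_3}(\nabla\tau)=\Omega|\nabla\tau|^2\,\breve e_3(\Omega)+\Omega^2(1+\Omega^2|\nabla\tau|^2)\,\langle D_{\hat\nabla\tau}\hat\nabla\tau,\breve e_3\rangle.
\end{equation*}
Adding the two displayed identities, the $\breve e_3(\Omega)$ terms and the $\langle D_{\hat\nabla\tau}\hat\nabla\tau,\breve e_3\rangle$ terms cancel in pairs, which leaves exactly $\widehat H=-\langle H_0,\breve e_3\rangle-\frac{\Omega}{\sqrt{1+\Omega^2|\nabla\tau|^2}}\,\alpha_{\breve e_3}(\nabla\tau)$, as claimed.

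I expect Step 2 to be the main obstacle. Because $\breve e_4$ is normal, not tangent, to $\Sigma$, the one-form $\alpha_{\breve e_3}(\nabla\tau)$ cannot be read off the induced normal connection directly, and one must track carefully which of the numerous terms produced by $D_{\breve e_4}\breve e_3$ and $D_{\frac{\partial}{\partial t}}\breve e_3$ survive after pairing with $\breve e_4$; the precise way the leftover $\breve e_3(\Omega)$ and $\langle D_{\hat\nabla\tau}\hat\nabla\tau,\breve e_3\rangle$ terms line up with those of Step 1 is dictated by the area relation \eqref{relation_area}. The remaining manipulations are routine.
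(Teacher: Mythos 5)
Your argument is correct, and both of your intermediate identities check out (I verified the Step 2 display independently: it is exactly what one gets by contracting \eqref{connection_reference_one} of Proposition \ref{connection_reference} with $\nabla^a\tau$ and using \eqref{relation_gradient}--\eqref{relation_area}, with the sign convention $\hat h_{ab}\hat\nabla^a\tau\hat\nabla^b\tau=-\langle D_{\hat\nabla\tau}\hat\nabla\tau,\breve e_3\rangle$). However, your route is genuinely different from the paper's. The paper never expands $\langle H_0,\breve e_3\rangle$ or $\alpha_{\breve e_3}(\nabla\tau)$ separately: it observes that $\breve e_3$ is the unit normal of the timelike hypersurface $C$, introduces its second fundamental form $\pi(\cdot,\cdot)=\langle D_{(\cdot)}\breve e_3,\cdot\rangle$, and equates the trace of $\pi$ computed in the two frames $\{\hat X_*(\partial_{v^a}),\Omega^{-1}\partial_t\}$ and $\{X_*(\partial_{v^a}),\breve e_4\}$; the identities $\pi(\partial_t,\hat X_*(\partial_{v^a}))=0$ and the decompositions \eqref{decompose_4}, \eqref{decompose_t} then give \eqref{relation_mean_curvature} in a few lines, with no appearance of $\breve e_3(\Omega)$ or $\hat h_{ab}\hat\nabla^a\tau\hat\nabla^b\tau$ and no cancellation to arrange. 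Your direct expansion is longer and requires the bookkeeping you flag in Step 2 (which does go through, since every term produced by derivatives of the coefficients in \eqref{decompose_4} is tangent to the slice or proportional to $\partial_t$ and dies against $\breve e_3$), but it buys something: as a by-product you essentially re-derive the $\nabla\tau$-contraction of Proposition \ref{connection_reference}, which the paper has to prove separately afterwards, and your computation makes explicit how the $\breve e_3(\Omega)$ and second-fundamental-form contributions of $\widehat\Sigma$ conspire to drop out of \eqref{relation_mean_curvature}. The paper's trace trick is the more economical proof of this one proposition; your version packages two of the section's computations into one.
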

\begin{proof} Note that $\breve e_3$ is the unit outward normal of the timelike hypersurface $C$. Denote by $\pi(\cdot, \cdot)=\langle  D_{(\cdot)} \breve e_3 , \cdot \rangle$ the second fundamental form of $C$ with 
respect to $\breve e_3$.
The idea of the proof is to compute the trace of $\pi$ along $C$ in two different tangent frames of $C$, $\{\hat X_*(\frac{\partial}{\partial v^a}), \frac{\frac{\partial}{\partial t}}{\Omega} \}$ and $\{X_*(\frac{\partial}{\partial v^a}),  \breve e_4 \}$. Thus
\[  \hat{\sigma}^{ab}\pi(\hat X_*(\frac{\partial}{\partial v^a}), \hat X_*(\frac{\partial}{\partial v^b})) -  \frac{1}{ \Omega^2}  \pi( \frac{\partial}{\partial t }, \frac{\partial}{\partial t })=
  {\sigma}^{ab}\pi(X_*(\frac{\partial}{\partial v^a}), X_*(\frac{\partial}{\partial v^b})) -  \pi( \breve  e_4 ,\breve  e_4 ).   \\
\]

By definition,  $ \hat{\sigma}^{ab}\pi(\hat X_*(\frac{\partial}{\partial v^a}), \hat X_*(\frac{\partial}{\partial v^b}) )= \widehat H$ and ${\sigma}^{ab}\pi(X_*(\frac{\partial}{\partial v^a}), X_*(\frac{\partial}{\partial v^b}) )=-\langle H_0, \breve e_3 \rangle $. On the other hand, by \eqref{christoffel}

\begin{equation}\label{2nd_C} \pi(\frac{\partial}{\partial t }, \frac{\partial}{\partial t })=-\frac{\breve{e}_3(\Omega)}{\Omega}, \pi(\hat X_*(\frac{\partial}{\partial v^a}), \hat X_*(\frac{\partial}{\partial v^b}) )=\hat h_{ab}, \text{  and  }   \pi(\frac{\partial}{\partial t}, \hat X_*(\frac{\partial}{\partial v^a}))=0.\end{equation}

We use \eqref{decompose_4} and  \eqref{decompose_t} to compute:
\[
\begin{split}
 &  \frac{1}{ \Omega^2}  \pi( \frac{\partial}{\partial t }, \frac{\partial}{\partial t })-  \pi( \breve  e_4 ,\breve  e_4)    \\
= &  \frac{1}{ \Omega^2}  \pi(\frac{\partial}{\partial t },\frac{\partial}{\partial t }) - \frac{\Omega}{\sqrt{1+\Omega^2| \nabla \tau|^2}}\pi({\nabla \tau} ,\breve  e_4)   - \frac{1}{\Omega\sqrt{1+\Omega^2| \nabla \tau|^2}}\pi(\frac{\partial}{\partial t } ,\breve  e_4 ) \\
=& - \frac{\Omega}{ \sqrt{1+\Omega^2| \nabla \tau|^2}} \langle D_{\nabla \tau} \breve e_3  , \breve e_4 \rangle  -  \pi({\frac{\partial}{\partial t }}, \hat \nabla \tau ).
\end{split}\] $\pi({\frac{\partial}{\partial t }}, \hat \nabla \tau)$ vanishes by \eqref{2nd_C}.

\end{proof}
In addition, we derive an identity for the connection one-form $\alpha_{\breve e_3}$ on $\Sigma$ that relates it to the second fundamental form of $\widehat \Sigma$.

\begin{proposition} \label{connection_reference}
Along $C$, the connection one-form $\alpha_{\breve e_3}$ on $\Sigma$ satisfies
\begin{equation}\label{connection_reference_one}
(\alpha_{\breve e_3})_a =  \sqrt{1+\Omega^2| \nabla \tau|^2} ( \Omega \hat \nabla^b \tau  \hat h_{ab} - \breve e_3 (\Omega) \tau_a )\end{equation}
where $\hat h_{ac}$ on the right hand side is the extension of the second fundamental form of $\widehat \Sigma$ to $C$ by the one-parameter family $\phi_t$.
\end{proposition}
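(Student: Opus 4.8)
\emph{Proof strategy.}
The starting point is that $(\alpha_{\breve e_3})_a$ is, by its definition \eqref{alpha_3} applied to the frame $\{\breve e_3,\breve e_4\}$, exactly $\langle D_{X_*(\partial/\partial v^a)}\breve e_3,\breve e_4\rangle$, that is, the component $\pi\big(X_*(\tfrac{\partial}{\partial v^a}),\breve e_4\big)$ of the second fundamental form $\pi(\cdot,\cdot)=\langle D_{(\cdot)}\breve e_3,(\cdot)\rangle$ of the timelike hypersurface $C$ already used in the proof of Proposition \ref{proposition_mean_curvature_projection}; note $\breve e_4$ is tangent to $C$ since $\breve e_4\perp\breve e_3$. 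The plan is to evaluate this single component by expanding each slot in an adapted frame of $C$, the same device as in Proposition \ref{proposition_mean_curvature_projection} but without taking a trace.

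In the second slot I substitute the formula \eqref{decompose_4} for $\breve e_4$, and recall from \eqref{grad_tau_C} that $\hat\nabla\tau=\hat\nabla^b\tau\,\widehat X_*(\partial/\partial v^b)$ along $C$; this gives
\[
(\alpha_{\breve e_3})_a=\sqrt{1+\Omega^2|\nabla\tau|^2}\left(\frac{1}{\Omega}\,\pi\Big(X_*(\tfrac{\partial}{\partial v^a}),\tfrac{\partial}{\partial t}\Big)+\Omega\,\hat\nabla^b\tau\,\pi\Big(X_*(\tfrac{\partial}{\partial v^a}),\widehat X_*(\tfrac{\partial}{\partial v^b})\Big)\right).
\]
For the first slot I substitute the identity $X_*(\tfrac{\partial}{\partial v^a})=\widehat X_*(\tfrac{\partial}{\partial v^a})+\tau_a\,\tfrac{\partial}{\partial t}$ for tangent vector fields on $C$, and invoke the values of $\pi$ recorded in \eqref{2nd_C}: $\pi(\widehat X_*(\partial/\partial v^a),\widehat X_*(\partial/\partial v^b))=\hat h_{ab}$, $\pi(\tfrac{\partial}{\partial t},\widehat X_*(\partial/\partial v^a))=0$, and the value of $\pi(\tfrac{\partial}{\partial t},\tfrac{\partial}{\partial t})$. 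This last one I read off from \eqref{christoffel}: since $\breve e_3$ is $\phi_t$-invariant along $C$, its components in the $\tfrac{\partial}{\partial x^i}$ are constant along the flow, so $D_{\partial/\partial t}\breve e_3=\breve e_3(\log\Omega)\,\tfrac{\partial}{\partial t}$, hence $\pi(\tfrac{\partial}{\partial t},\tfrac{\partial}{\partial t})=\langle D_{\partial/\partial t}\breve e_3,\tfrac{\partial}{\partial t}\rangle=-\Omega\,\breve e_3(\Omega)$ after inserting $\langle\tfrac{\partial}{\partial t},\tfrac{\partial}{\partial t}\rangle=-\Omega^2$ from \eqref{metric_form}. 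Putting these together,
\[
\pi\Big(X_*(\tfrac{\partial}{\partial v^a}),\tfrac{\partial}{\partial t}\Big)=\tau_a\,\pi\Big(\tfrac{\partial}{\partial t},\tfrac{\partial}{\partial t}\Big)=-\Omega\,\breve e_3(\Omega)\,\tau_a,\qquad \pi\Big(X_*(\tfrac{\partial}{\partial v^a}),\widehat X_*(\tfrac{\partial}{\partial v^b})\Big)=\hat h_{ab},
\]
and substituting into the display above, with the factor of $\Omega$ in the first term cancelling, produces exactly \eqref{connection_reference_one}.

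This is a short computation with no real obstacle; the work is purely bookkeeping. The one place demanding care is the $\pi(\tfrac{\partial}{\partial t},\tfrac{\partial}{\partial t})$ term: it is the contribution that vanished by cancellation in the proof of Proposition \ref{proposition_mean_curvature_projection} but is precisely what produces the $\breve e_3(\Omega)\tau_a$ in the statement, so one must track the timelike normalization $\langle\tfrac{\partial}{\partial t},\tfrac{\partial}{\partial t}\rangle=-\Omega^2$ carefully and be consistent that $\breve e_3$, $\breve e_4$, $\hat h_{ab}$ and $\hat\nabla\tau$ all denote their $\phi_t$-extensions to $C$, as fixed earlier in this section. (Equivalently, one can differentiate the orthogonality relations $\langle\breve e_3,\tfrac{\partial}{\partial t}\rangle=0=\langle\breve e_3,\widehat X_*(\partial/\partial v^b)\rangle$ along $C$ to move the covariant derivative off $\breve e_3$; this leads to the same terms.)
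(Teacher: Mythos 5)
Your proof is correct and takes essentially the same route as the paper: the paper likewise evaluates $(\alpha_{\breve e_3})_a=\pi\big(X_*(\tfrac{\partial}{\partial v^a}),\breve e_4\big)$ by substituting \eqref{decompose_4} together with $X_*(\tfrac{\partial}{\partial v^a})=\widehat X_*(\tfrac{\partial}{\partial v^a})+\tau_a\tfrac{\partial}{\partial t}$ and then using \eqref{christoffel}, \eqref{relation_gradient} and the values in \eqref{2nd_C}. Your explicitly derived value $\pi(\tfrac{\partial}{\partial t},\tfrac{\partial}{\partial t})=-\Omega\,\breve e_3(\Omega)$ is exactly what the paper's final line uses, so the bookkeeping checks out.
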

\begin{proof}
By definition, $(\alpha_{\breve e_3})_a$ is
\[
\begin{split}
&  \pi( X_*(\frac{\partial}{\partial v^a}) , \breve e_4) \\
=&  \pi( \hat X_*(\frac{\partial}{\partial v^a}) + \tau_a \frac{\partial}{\partial t}  , \frac{ \sqrt{1+\Omega^2| \nabla \tau|^2}}{\Omega}\frac{\partial}{\partial t} + \Omega \sqrt{1+\Omega^2| \nabla \tau|^2} \hat \nabla \tau)\\
=& - \breve e_3 (\Omega) \tau_a  \sqrt{1+\Omega^2| \nabla \tau|^2} + \Omega  \sqrt{1+\Omega^2| \nabla \tau|^2} (\hat \nabla^b \tau)  \hat h_{ab}.
\end{split}
\]
\eqref{decompose_4} is used in  the first equality, and \eqref{christoffel} and \eqref{relation_gradient} are used in the second equality. \end{proof}
We have the following lemma for the restriction of the static potential to surfaces in the static slice. 
\begin{lemma}
Let $\Sigma$ be a surface in the static slice. Let $\Delta$ be the Laplace operator of the induced metric, $\breve e_3$ be the unit outward normal, $H_0$ be the mean curvature, and $h_{ab}$ be the second fundamental form. We have
\begin{align}
\label{induced_laplace}(\Delta + 2 \kappa) \Omega = & - H_0 \breve e_3(\Omega)\\
\label{mix_derivative}\nabla_a \breve e_3(\Omega)  = & h_{ab} \nabla^b \Omega.
\end{align}
\end{lemma}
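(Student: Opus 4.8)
The plan is to obtain both identities directly from the static equation \eqref{static_potential_hessian}, namely $\bar\nabla^2\Omega = -\kappa\Omega g$ on the static slice, by restricting the ambient Hessian of $\Omega$ to $\Sigma$ and invoking the Gauss–Weingarten formulas. Throughout, I fix a local orthonormal frame $\{e_a\}$ tangent to $\Sigma$, so that $\{e_a,\breve e_3\}$ frames the static slice along $\Sigma$, and I use the convention $h_{ab}=\langle \bar\nabla_{e_a}\breve e_3, e_b\rangle$ for the second fundamental form with respect to the outward normal $\breve e_3$, so that $H_0=\sigma^{ab}h_{ab}$ and the Gauss formula reads $\bar\nabla_{e_a}e_b=\nabla_{e_a}e_b-h_{ab}\breve e_3$.

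For \eqref{mix_derivative}, I would write $\breve e_3(\Omega)=\langle \bar\nabla\Omega,\breve e_3\rangle$ and differentiate it along $e_a$:
$$\nabla_a\big(\breve e_3(\Omega)\big)=\langle \bar\nabla_{e_a}\bar\nabla\Omega,\breve e_3\rangle+\langle \bar\nabla\Omega,\bar\nabla_{e_a}\breve e_3\rangle=\bar\nabla^2\Omega(e_a,\breve e_3)+\langle \bar\nabla\Omega,\bar\nabla_{e_a}\breve e_3\rangle.$$
The first term vanishes because \eqref{static_potential_hessian} gives $\bar\nabla^2\Omega(e_a,\breve e_3)=-\kappa\Omega\, g(e_a,\breve e_3)=0$. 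Since $|\breve e_3|^2=1$, the Weingarten relation yields $\bar\nabla_{e_a}\breve e_3=h_{ab}\sigma^{bc}e_c$, so the second term is precisely $h_{ab}\nabla^b\Omega$, which is \eqref{mix_derivative}.

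For \eqref{induced_laplace}, I would take the trace over $\Sigma$ of the ambient Hessian in two ways. From the Gauss formula, $\bar\nabla^2\Omega(e_a,e_b)=\nabla^2\Omega(e_a,e_b)+h_{ab}\,\breve e_3(\Omega)$, hence $\sigma^{ab}\bar\nabla^2\Omega(e_a,e_b)=\Delta\Omega+H_0\,\breve e_3(\Omega)$. On the other hand, since the static slice is three-dimensional, $\sigma^{ab}\bar\nabla^2\Omega(e_a,e_b)=\bar\Delta\Omega-\bar\nabla^2\Omega(\breve e_3,\breve e_3)$, and both terms are read off from \eqref{static_potential_hessian}: $\bar\Delta\Omega=-3\kappa\Omega$ and $\bar\nabla^2\Omega(\breve e_3,\breve e_3)=-\kappa\Omega$, so the expression equals $-2\kappa\Omega$. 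Equating the two evaluations gives $(\Delta+2\kappa)\Omega=-H_0\,\breve e_3(\Omega)$.

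There is no genuine analytic obstacle here; once \eqref{static_potential_hessian} is in hand the computation is short. The only point that needs care is the bookkeeping of sign conventions — the sign of $h_{ab}$ relative to the choice of $\breve e_3$ as the outward normal, and the corresponding orientation of $H_0$ — so that the minus signs in \eqref{induced_laplace} and \eqref{mix_derivative} come out exactly as stated; I would therefore fix these conventions at the outset and carry them consistently through the trace computation.
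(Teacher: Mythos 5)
Your proof is correct and follows the same route as the paper, which simply notes that both identities are direct consequences of the static equation \eqref{static_potential_hessian} together with the definition of the second fundamental form; you have merely written out the Gauss--Weingarten bookkeeping that the paper leaves implicit. The sign conventions you fix ($h_{ab}=\langle\bar\nabla_{e_a}\breve e_3,e_b\rangle$ with $\breve e_3$ outward) are consistent with the paper's usage, so the computation stands as written.
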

\begin{proof}
Both equations are simple consequences of  \eqref{static_potential_hessian} and the definition of the second fundamental form.
\end{proof}
\section{A conservation law}
Proposition \ref{proposition_mean_curvature_projection} leads to the following conservation law for surfaces in the dS or AdS spacetime. This generalizes Proposition 3.1 of \cite{Wang-Yau2}.
\begin{proposition}\label{conservation}
For any surface $\Sigma$ in the reference spacetime, we have the following conservation law:
\[
\int \Omega \widehat H d \widehat \Sigma = \int  \left[ - \Omega\sqrt{1+\Omega^2| \nabla \tau|^2}  \langle  H_0, \breve e_3 \rangle - \Omega^2  \langle D_{\nabla \tau} \breve e_3  , \breve e_4 \rangle \right] d \Sigma.
\]
\end{proposition}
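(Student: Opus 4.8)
The plan is to integrate the pointwise identity of Proposition \ref{proposition_mean_curvature_projection} over the surface, converting the measure $d\widehat\Sigma$ into $d\Sigma$ along the way. I would start from \eqref{relation_mean_curvature}, namely
\[
\widehat H = -\langle H_0, \breve e_3 \rangle - \frac{\Omega}{\sqrt{1+\Omega^2|\nabla\tau|^2}}\,\alpha_{\breve e_3}(\nabla\tau),
\]
multiply both sides by the static potential $\Omega$, and prepare to integrate. The key bookkeeping step is the relation between the area forms of $\Sigma$ and $\widehat\Sigma$: from \eqref{relation_lower_metric} one has $\det\sigma = (1-\Omega^2|\hat\nabla\tau|^2)\det\hat\sigma$, so that $d\Sigma = \sqrt{1-\Omega^2|\hat\nabla\tau|^2}\, d\widehat\Sigma$, and by \eqref{relation_area} this is exactly $d\widehat\Sigma = \sqrt{1+\Omega^2|\nabla\tau|^2}\, d\Sigma$. (All quantities are extended along $C$ by $\phi_t$, so the integrals over $\widehat\Sigma$ and over $\Sigma$ are both genuinely integrals over $\Sigma_0$, which is what makes the two sides comparable.)

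Next I would carry out the substitution. Multiplying \eqref{relation_mean_curvature} by $\Omega$ and by the area-form factor $\sqrt{1+\Omega^2|\nabla\tau|^2}$ gives
\[
\Omega\widehat H\,\sqrt{1+\Omega^2|\nabla\tau|^2} = -\Omega\sqrt{1+\Omega^2|\nabla\tau|^2}\,\langle H_0,\breve e_3\rangle - \Omega^2\,\alpha_{\breve e_3}(\nabla\tau),
\]
and then recognize, directly from the definition \eqref{alpha_3} of $\alpha_{\breve e_3}$, that $\Omega^2\alpha_{\breve e_3}(\nabla\tau) = \Omega^2\langle D_{\nabla\tau}\breve e_3,\breve e_4\rangle$. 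Integrating the left-hand side against $d\Sigma$ and using $d\widehat\Sigma = \sqrt{1+\Omega^2|\nabla\tau|^2}\,d\Sigma$ turns $\int \Omega\widehat H\sqrt{1+\Omega^2|\nabla\tau|^2}\,d\Sigma$ into $\int \Omega\widehat H\,d\widehat\Sigma$, while the right-hand side is already expressed against $d\Sigma$, yielding precisely the claimed identity.

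I expect the only genuine subtlety, rather than an obstacle, to be keeping the extensions along $C$ consistent so that the two area measures can be legitimately identified on $\Sigma_0$; the rest is the area-form computation \eqref{relation_area} together with unwinding the definition \eqref{alpha_3}. In particular, one should note that $\breve e_3$ and $\breve e_4$ here are the frame adapted to $\widehat\Sigma\subset\{t=0\}$ (the "$\breve{}$" frame), so $\alpha_{\breve e_3}$ in \eqref{relation_mean_curvature} and the term $\langle D_{\nabla\tau}\breve e_3,\breve e_4\rangle$ in the conclusion refer to the same one-form, and no reconciliation with the mean-curvature gauge $\alpha_{H_0}$ is needed at this stage. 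No positivity or elliptic input is required; this is a pure change-of-variables identity.
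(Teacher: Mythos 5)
Your proof is correct and is essentially the paper's own argument: multiply \eqref{relation_mean_curvature} by $\Omega$, use the area-form relation $d\widehat\Sigma=\sqrt{1+\Omega^2|\nabla\tau|^2}\,d\Sigma$ coming from \eqref{relation_lower_metric} and \eqref{relation_area}, and unwind the definition \eqref{alpha_3} of $\alpha_{\breve e_3}$. The extra care you take with the determinant computation and the extensions along $C$ is consistent with, and merely fills in, the paper's one-line proof.
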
 
\begin{proof}
Multiply \eqref {relation_mean_curvature} by $\Omega$ and integrate over $\Sigma$. By  \eqref{relation_lower_metric}, the two area forms satisfy
\begin{equation}\label{area_form}  d \widehat \Sigma =  \sqrt{1+\Omega^2| \nabla \tau|^2}  d  \Sigma . \end{equation}
\end{proof}
To define the quasi-local energy, the right hand side of the conservation law is rewritten in terms of the mean curvature gauge in the following proposition. 
\begin{proposition} \label{total_mean_mean_gauge}
In terms of the connection one-form in mean curvature gauge $\alpha_{H_0}$, the conservation law in Proposition \ref{conservation} reads
\[
\begin{split}
& \int \Omega \widehat H d \widehat \Sigma = \int  \Big [ \sqrt{(1+\Omega^2| \nabla \tau|^2) |H_0|^2  \Omega^2 + div(\Omega^2 \nabla \tau)^2 } + div(\Omega^2 \nabla \tau) \theta  -  \alpha_{H_0} (\Omega^2 \nabla \tau)  \Big ] d \Sigma,
\end{split}
\] where \begin{equation}\label{gauge_angle}
\theta = - \sinh^{-1}  \frac{ div(\Omega^2 \nabla \tau) }{|H_0|\Omega \sqrt{1+\Omega^2| \nabla \tau|^2} }.
\end{equation}
\end{proposition}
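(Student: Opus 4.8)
The plan is to take the conservation law of Proposition~\ref{conservation} and convert its right-hand side---which is written through the frame $\{\breve e_3,\breve e_4\}$ adapted to the projection $\widehat\Sigma$---into the mean curvature gauge $\{\frac{H_0}{|H_0|},\frac{J_0}{|H_0|}\}$. Since both are orthonormal frames of the Lorentzian normal bundle of $\Sigma$ with compatible orientations, they are related by a boost through some hyperbolic angle $\theta$; under this boost $\langle H_0,\breve e_3\rangle$ and $\langle H_0,\breve e_4\rangle$ become $-|H_0|\cosh\theta$ and $|H_0|\sinh\theta$ (with the signs fixed by the orientation and time-orientation conventions of Section~2), and the connection one-forms are related by $\alpha_{\breve e_3}=\alpha_{H_0}+d\theta$. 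Granting this, the only genuinely new input needed is the timelike mean-curvature component $\langle H_0,\breve e_4\rangle$; the proposition then follows by substitution plus one integration by parts on the closed surface $\Sigma$.

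The heart of the argument is the computation of $\langle H_0,\breve e_4\rangle$, which is where the divergence term $\mathrm{div}(\Omega^2\nabla\tau)$ enters, and I expect it to be the main obstacle. I would first compute $\langle H_0,\frac{\partial}{\partial t}\rangle$. For any Killing field $V$ of the reference spacetime, $\langle H_0,V\rangle=\mathrm{div}_\Sigma(V^\top)$, where $V^\top$ is the tangential part of $V$ along $\Sigma$: writing $V=V^\top+V^\perp$, the trace $\sigma^{ab}\langle D_{\partial_a}V,\partial_b\rangle$ vanishes since $\langle D_{\partial_a}V,\partial_b\rangle$ is antisymmetric in $a,b$ by the Killing equation, so $\mathrm{div}_\Sigma(V^\top)=-\sigma^{ab}\langle D_{\partial_a}V^\perp,\partial_b\rangle=\sigma^{ab}\langle V^\perp,D_{\partial_a}\partial_b\rangle=\langle V^\perp,H_0\rangle=\langle V,H_0\rangle$. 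Applying this to $V=\frac{\partial}{\partial t}$, and reading off from \eqref{decompose_t} that the tangential part of $\frac{\partial}{\partial t}$ along $\Sigma$ is exactly $-\Omega^2\nabla\tau$ (as $\breve e_4\perp\Sigma$ and $\nabla\tau$ is tangent), gives $\langle H_0,\frac{\partial}{\partial t}\rangle=-\mathrm{div}(\Omega^2\nabla\tau)$. Feeding \eqref{decompose_t} in once more, $\langle H_0,\frac{\partial}{\partial t}\rangle=\Omega\sqrt{1+\Omega^2|\nabla\tau|^2}\,\langle H_0,\breve e_4\rangle$, whence
\[
\langle H_0,\breve e_4\rangle=\frac{-\mathrm{div}(\Omega^2\nabla\tau)}{\Omega\sqrt{1+\Omega^2|\nabla\tau|^2}}.
\]
(Alternatively, $\langle H_0,\breve e_4\rangle$ can be obtained by a trace computation in the spirit of the proof of Proposition~\ref{proposition_mean_curvature_projection}, using \eqref{decompose_4}, \eqref{decompose_t} and \eqref{christoffel}, but the Killing-field argument is shorter.)

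Since the mean curvature vector is spacelike, the normal plane is Lorentzian and $\langle H_0,\breve e_3\rangle^2-\langle H_0,\breve e_4\rangle^2=|H_0|^2$; with the convention (as in the Minkowski-reference case) that $-\langle H_0,\breve e_3\rangle>0$ this yields
\[
-\Omega\sqrt{1+\Omega^2|\nabla\tau|^2}\,\langle H_0,\breve e_3\rangle=\Omega\sqrt{1+\Omega^2|\nabla\tau|^2}\sqrt{|H_0|^2+\langle H_0,\breve e_4\rangle^2}=\sqrt{(1+\Omega^2|\nabla\tau|^2)|H_0|^2\Omega^2+\mathrm{div}(\Omega^2\nabla\tau)^2},
\]
the first term on the right-hand side of the proposition; moreover $\sinh\theta=\langle H_0,\breve e_4\rangle/|H_0|$ is precisely the formula \eqref{gauge_angle} for $\theta$. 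For the connection one-form, expanding $\alpha_{\breve e_3}(\cdot)=\langle D_{(\cdot)}\breve e_3,\breve e_4\rangle$ in terms of $\frac{H_0}{|H_0|}$, $\frac{J_0}{|H_0|}$ and using \eqref{alpha_h} (the cross terms assembling into $d\theta$, the $\cosh^2-\sinh^2$ combination collapsing to $1$) gives $\alpha_{\breve e_3}=\alpha_{H_0}+d\theta$, so $-\Omega^2\langle D_{\nabla\tau}\breve e_3,\breve e_4\rangle=-\Omega^2\langle\nabla\theta,\nabla\tau\rangle-\Omega^2\alpha_{H_0}(\nabla\tau)$. Integrating the first summand by parts over $\Sigma$ turns $-\int\langle\nabla\theta,\Omega^2\nabla\tau\rangle\,d\Sigma$ into $\int\theta\,\mathrm{div}(\Omega^2\nabla\tau)\,d\Sigma$, while $-\int\Omega^2\alpha_{H_0}(\nabla\tau)\,d\Sigma=-\int\alpha_{H_0}(\Omega^2\nabla\tau)\,d\Sigma$; adding these to the first term gives the claimed identity. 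The only delicate bookkeeping is the sign tracking in the boost between the two gauges and the justification that $-\langle H_0,\breve e_3\rangle>0$; both are controlled by the conventions already fixed in Section~2, and specializing to $\Omega=1$ should reduce the statement to the Wang--Yau identity $\sqrt{1+|\nabla\tau|^2}\,\widehat H=\sqrt{1+|\nabla\tau|^2}\cosh\theta_0|H_0|-\nabla\tau\cdot\nabla\theta_0-\alpha_{H_0}(\nabla\tau)$ quoted in the introduction, which serves as a consistency check.
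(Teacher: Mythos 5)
Your proof is correct and follows essentially the same route as the paper: compute $\langle H_0,\breve e_4\rangle$ using the Killing field $\frac{\partial}{\partial t}$ together with \eqref{decompose_t} (the paper's starting point $\langle D_{e_a}\frac{\partial}{\partial t},e_a\rangle=0$ is exactly your antisymmetry argument), identify $\theta$ via the boost \eqref{gauge}, convert $\alpha_{\breve e_3}$ into $\alpha_{H_0}$ plus an exact term, and integrate by parts on the closed surface. One remark: your relation $\alpha_{\breve e_3}=\alpha_{H_0}+d\theta$ is the correct one — the third identity of \eqref{gauge_change} as printed has the opposite sign, but your version is the one consistent with a direct computation from \eqref{gauge}, with \eqref{gauge_angle} and the stated conservation law, and with how the relation is actually used in the proof of Theorem \ref{thm_own_critical}.
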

\begin{proof}
Let $\theta$ be the angle between the oriented frames $\{ -\frac{H}{|H|}, \frac{J}{|H|} \}$ and $ \{ \breve e_3,\breve e_4\}$, i.e.
\begin{equation}
\begin{split} \label{gauge}
- \frac{H_0}{|H_0|} = & \cosh \theta \breve e_3 + \sinh \theta \breve e_4 \\
\frac{J_0}{|H_0|} = & \sinh \theta \breve e_3 + \cosh \theta \breve e_4. \\
\end{split}
\end{equation}

In particular, we have
\begin{equation} \label{gauge_change}   \langle H_0 , \breve e_4 \rangle  = |H_0| \sinh \theta, \quad
  - \langle H_0 , \breve e_3 \rangle  = |H_0| \cosh \theta, 
 \text{ \,\, and  \,\,} 
 \alpha_{H_0} = \alpha_{\breve e_3} + d \theta
.\end{equation} 
To compute $\langle H_0 , \breve e_4 \rangle$, we start with $\langle D_{e_a} \frac{\partial}{\partial t} ,  e_a \rangle =0$ and then use \eqref{decompose_t} to derive
\[ 
\Omega \sqrt{1+\Omega^2| \nabla \tau|^2} \langle D_{e_a}  \breve e_4,  e_a \rangle = \langle D_{e_a}  \Omega^2 \nabla \tau ,  e_a \rangle.
\]
The right hand side is precisely $ div(\Omega^2 \nabla \tau)$. As a result,
\[  - \langle H_0 , \breve e_4 \rangle  =   \frac{ div(\Omega^2 \nabla \tau) }{\Omega \sqrt{1+\Omega^2| \nabla \tau|^2} }\] and $\theta$ is given by \eqref{gauge_angle}.
The proposition now follows from a direct computation.
\end{proof}
\section{Definition of the Quasi-local energy}
Now we consider a surface $\Sigma$ in a general spacetime $N$.  As in \cite{Wang-Yau2,Wang-Yau3}, a quasi-local energy is assigned to each pair of an isometric embedding $X$ of $\Sigma$ into the reference spacetime, and an observer $T_0$ (a future timelike Killing field).  Isometric embeddings into the dS spacetime and the AdS spacetime are studied in \cite{Lin-Wang}. The set of observers is simply the orbit of $\frac{\partial}{\partial t}$ under the isometry group of the reference spacetime. 
See Section 7.2 for more details in the AdS case.

Let $\Sigma$ be  a surface  in a  spacetime $N$. We assume the mean curvature vector $H$ of $\Sigma$ is spacelike and the normal bundle of $\Sigma$ is oriented. The data we use for defining the quasi-local energy is the triple $(\sigma,|H|,\alpha_H)$ where $\sigma$ is the induced metric, $|H|$ is the norm of the mean curvature vector, and $\alpha_H$ is the connection one-form of the normal bundle with respect to the mean curvature vector
\[ \alpha_H(\cdot )=\langle \nabla^N_{(\cdot)}   \frac{J}{|H|}, \frac{H}{|H|}   \rangle.  \]
Here $J$ is the reflection of $H$ through the incoming  light cone in the normal bundle. For an isometric embedding $X$ into the reference spacetime, we write $X= (\tau ,X^1,X^2,X^3)$ with respect to a fixed static chart of the reference spacetime. The quasi-local energy associated to the pair $(X,\frac{\partial}{\partial t})$ is defined to be
\begin{equation}\label{energy_fix_chart_base}
\begin{split}
  E(\Sigma, X,\frac{\partial}{\partial t})
= & \frac{1}{8 \pi}  \Big \{  \int \Omega \widehat H d \widehat \Sigma -
 \int  \Big [ \sqrt{(1+\Omega^2| \nabla \tau|^2) |H|^2  \Omega^2 + div(\Omega^2 \nabla \tau)^2 }  \\
& \qquad -   div(\Omega^2 \nabla \tau)  \sinh^{-1} \frac{ div(\Omega^2 \nabla \tau) }{\Omega |H|\sqrt{1+\Omega^2| \nabla \tau|^2} }
  - \Omega^2 \alpha_{H} (\nabla \tau)  \Big ] d \Sigma
\Big \}.
\end{split}
\end{equation}
Using Proposition \ref{total_mean_mean_gauge}, we have
\begin{equation}\label{energy_fix_chart_graph} 
\begin{split}
   E(\Sigma, X,\frac{\partial}{\partial t}) 
=& \frac{1}{8 \pi}  \Big \{  
 \int  \Big [ \sqrt{(1+\Omega^2| \nabla \tau|^2) |H_0|^2 \Omega^2 + div(\Omega^2 \nabla \tau)^2 }  \\
& \qquad -   div(\Omega^2 \nabla \tau)  \sinh^{-1} \frac{ div(\Omega^2 \nabla \tau) }{\Omega |H_0|\sqrt{1+\Omega^2| \nabla \tau|^2} }
  - \Omega^2 \alpha_{H_0} (\nabla \tau)  \Big ] d \Sigma\\
& -  \int  \Big [ \sqrt{(1+\Omega^2| \nabla \tau|^2) |H|^2  \Omega^2 + div(\Omega^2 \nabla \tau)^2 }  \\
& \qquad -   div(\Omega^2 \nabla \tau)  \sinh^{-1} \frac{ div(\Omega^2 \nabla \tau) }{\Omega |H|\sqrt{1+\Omega^2| \nabla \tau|^2} }
  - \Omega^2 \alpha_{H} (\nabla \tau)  \Big ] d \Sigma
\Big \}.
\end{split}
\end{equation}
\begin{remark} \label{Brown-York_positive}
For an isometric embedding into the static slice of the AdS spacetime, 
\[ E(\Sigma,X,\frac{\partial}{\partial t}) = \int \Omega(H_0-|H|) d\Sigma.\] 
Such an expression was studied in \cite{Wang-Yau07, Shi-Tam}. In particular, the positivity of the above expression was proved in \cite{Shi-Tam}.
\end{remark}
While the above expression seems to depend on the choice of the static chart, we can rewrite it purely in terms of the isometric embedding $X$ and the observer $T_0$. In fact, $\Omega^2 = - \langle T_0,T_0 \rangle$ and  $- \Omega^2  \nabla \tau= T_0^\top $, the tangential component of $T_0$ to $X(\Sigma)$.  Thus
\begin{definition}\label{energy_invariant} 
The quasi-local energy $E(\Sigma, X,T_0)$ of $\Sigma$ with respect to the pair $(X,T_0)$ of an isometric embedding $X$ and an observer $T_0$ is  
\[ 
\begin{split}
   & 8 \pi E(\Sigma, X,T_0) \\
=&  
 \int_{\Sigma}  \Big [ \sqrt{  - \langle T_0^\perp,T_0^\perp \rangle |H_0|^2  + div(T_0^\top)^2 }  -   div(T_0^\top)  \sinh^{-1} \frac{ div(T_0^\top) }{|H_0|\sqrt{-  \langle T_0^\perp,T_0^\perp \rangle} }   + \alpha_{H_0} (T_0^\top)  \Big ]  d\Sigma\\
   & - \int_{\Sigma}  \Big [ \sqrt{ -  \langle T_0^\perp,T_0^\perp \rangle |H|^2  + div(T_0^\top)^2 }  -   div(T_0^\top)  \sinh^{-1} \frac{ div(T_0^\top) }{|H|\sqrt{-  \langle T_0^\perp,T_0^\perp \rangle} }   + \alpha_{H} (T_0^\top)  \Big ]d\Sigma .
\end{split}
\]
where $T_0^\perp$ is the normal part of $T_0$ to $X(\Sigma)$.
\end{definition}

The quasi-local energy is invariant with respect to the isometry of the reference spacetime if an isometry is applied to both $X$ and $T_0$.  As a result, in studying the variation of $E$, it suffices to consider the quasi-local energy with respect to a fixed $T_0 =  \frac{\partial}{\partial t}$. 

The quasi-local energy is expressed in terms of the difference of two integrals. We refer to the first integral in \eqref{energy_fix_chart_base} as the reference Hamiltonian and the second integral in \eqref{energy_fix_chart_base} as the physical Hamiltonian.
\section{First variation of the quasi-local energy}
In this section, we compute the first variation of the quasi-local energy. It suffices to consider the variation of the isometric embedding $X$ while fixing $T_0=\frac{\partial}{\partial t}$.

\begin{definition}
An optimal isometric embedding for the data $(\sigma, |H|, \alpha_H)$ is an isometric embedding $X_0$ of $\sigma$ into the reference spacetime (dS or AdS) that is a critical point of the quasi-local energy $E(\Sigma, X, \frac{\partial}{\partial t})$ among all nearby isometric embeddings $X$ of $\sigma$.
\end{definition}

For the Wang-Yau quasi-local energy with the Minkowski  reference, the first variation of  the quasi-local energy is computed in Section 6 of \cite{Wang-Yau2}. The computation of the variation of the physical Hamiltonian is straightforward  and the main difficulty is to evaluate the variation of the reference Hamiltonian. In \cite{Wang-Yau2}, this is done by computing the variation of the total mean curvature of a surface in $\R^3$ with respect to a variation of the metric. 
This becomes more complicated here since the isometric embedding equation also involves the static potential when the reference is either the dS or AdS spacetime. Instead of following the approach in  \cite{Wang-Yau2}, we derive the first variation by an alternative approach used in \cite{Chen-Wang-Yau1}. The idea there is to consider the image  $X(\Sigma)$ in the reference spacetime as a new physical surface and show that it is naturally a critical point of the quasi-local energy with respect to other isometric embeddings into the reference spacetime. We first derive the following result for surfaces in the reference spacetime. 

\begin{theorem} \label{thm_own_critical}
The identity isometric embedding for a surface $\Sigma$ in the reference spacetime is a critical point of its own quasi-local energy. Namely, suppose $\Sigma$ is in the reference spacetime defined by an embedding $X_0$. Consider a family of isometric embeddings $X(s)$, $-\epsilon<s<\epsilon$ such that $X(0)=X_0$. Then we have
\[  \frac{d}{ds}|_{s=0} E(\Sigma, X(s), \frac{\partial}{\partial t})= 0. \]
\end{theorem}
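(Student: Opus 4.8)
The plan is to exploit the structure of the quasi-local energy as a difference of a reference Hamiltonian and a physical Hamiltonian, and to observe that for a surface already sitting in the reference spacetime these two Hamiltonians are built from the \emph{same} geometric data evaluated via two different isometric embeddings. Concretely, when $\Sigma$ is embedded by $X_0$ into the reference spacetime, its induced metric is $\sigma$, the norm of its mean curvature vector is $|H_0|$, and its connection one-form in the mean curvature gauge is $\alpha_{H_0}$. For the family $X(s)$, write $X(s) = (\tau(s), X^1(s), X^2(s), X^3(s))$ and let $\widehat\Sigma(s)$ be the projection to the static slice. By the conservation law of Proposition~\ref{conservation} (equivalently its mean-gauge form, Proposition~\ref{total_mean_mean_gauge}), the reference Hamiltonian $\int \Omega \widehat H(s)\, d\widehat\Sigma(s)$ equals exactly the expression
\[
\int \Big[ \sqrt{(1+\Omega^2|\nabla\tau(s)|^2)|H_0|^2\Omega^2 + \mathrm{div}(\Omega^2\nabla\tau(s))^2} + \mathrm{div}(\Omega^2\nabla\tau(s))\,\theta(s) - \alpha_{H_0}(\Omega^2\nabla\tau(s))\Big] d\Sigma,
\]
which is precisely the physical Hamiltonian with $|H|$ replaced by $|H_0|$ and $\alpha_H$ replaced by $\alpha_{H_0}$. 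Comparing with \eqref{energy_fix_chart_graph}, this shows $E(\Sigma, X(s), \frac{\partial}{\partial t})$ is the difference of two integrals that agree identically in $s$ as functionals of $(\sigma, |H_0|, \alpha_{H_0})$ and the time function $\tau(s)$ — that is, the energy vanishes identically along the family, not merely to first order.

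First I would set up notation carefully: fix the static chart, record that the physical data $(\sigma, |H_0|, \alpha_{H_0})$ is $s$-independent because $X(s)$ is an isometric embedding of the \emph{same} abstract surface with the \emph{same} induced metric, and that all the $s$-dependence in both Hamiltonians enters only through $\tau(s)$ and hence through $\nabla\tau(s)$, $\mathrm{div}(\Omega^2\nabla\tau(s))$, and $\theta(s)$. Then I would invoke Proposition~\ref{total_mean_mean_gauge} applied to each $X(s)$ — since $X(s)$ maps into the reference spacetime, the proposition applies verbatim with $H_0$, $\alpha_{H_0}$ the mean curvature norm and connection one-form of $X(s)(\Sigma)$. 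The crux is that these equal the intrinsic-to-$\Sigma$ quantities $|H_0|$, $\alpha_{H_0}$ for every $s$ (this is where being in the reference spacetime, as opposed to a general $N$, is used). Substituting into \eqref{energy_fix_chart_graph} cancels the reference Hamiltonian against the first physical integral, leaving $E(\Sigma, X(s), \frac{\partial}{\partial t}) \equiv 0$, whence the derivative at $s=0$ vanishes trivially.

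The main obstacle is justifying the identification ``the mean curvature gauge data of the image surface $X(s)(\Sigma)$ coincides with $|H_0|$ and $\alpha_{H_0}$,'' i.e.\ that applying an isometric \emph{embedding} of the fixed intrinsic metric $\sigma$ into the reference spacetime does not change the norm of the mean curvature vector or the mean-curvature-gauge connection one-form. In the Minkowski case this is the content of the identities quoted from \cite{Chen-Wang-Yau2} in the introduction; here one must check that the analogous statement survives the presence of the static potential $\Omega$ — in other words, that Proposition~\ref{total_mean_mean_gauge} really does produce the \emph{physical} Hamiltonian form with the image surface's own $|H_0|, \alpha_{H_0}$ for an arbitrary isometric embedding into the reference spacetime, uniformly in $s$. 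If a subtlety arises (for instance if $|H_0|$ or $\alpha_{H_0}$ of the image do depend on the embedding), the fallback is the differential version: compute $\frac{d}{ds}|_{s=0}$ of both Hamiltonians separately, using Proposition~\ref{connection_reference} and Proposition~\ref{proposition_mean_curvature_projection} to express the variation of $\int \Omega\widehat H\, d\widehat\Sigma$ in terms of $\delta\tau$ and the geometry of $\Sigma$, and match it against the (elementary) first variation of the physical Hamiltonian, integrating by parts and using the static equation \eqref{static_potential_hessian} together with \eqref{induced_laplace}–\eqref{mix_derivative} to close the computation. I expect the clean ``identically zero'' argument to work, with the differential version available as a safety net.
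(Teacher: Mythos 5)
Your main argument contains a genuine gap: the claim that the mean-curvature-gauge data of the image $X(s)(\Sigma)$ coincides with the fixed physical data $(|H_0|,\alpha_{H_0})$ of $X_0(\Sigma)$ for every $s$ is false. An isometric embedding preserves only the induced metric $\sigma$; the norm of the mean curvature vector and the connection one-form are extrinsic and do change as the embedding varies (e.g.\ a family with $\tau(s)\neq 0$ tilts the surface in time and alters $|H_0(s)|$, $\alpha_{H_0}(s)$). Consequently the two integrals in \eqref{energy_fix_chart_graph} do not cancel for $s\neq 0$, and $E(\Sigma,X(s),\frac{\partial}{\partial t})$ is \emph{not} identically zero along the family --- it is zero at $s=0$ (that is exactly Proposition \ref{total_mean_mean_gauge}), but vanishing at a single point does not give criticality, and no positivity statement is available here to promote $E\geq 0$ plus $E(0)=0$ into a minimum (unlike the Minkowski case alluded to in the introduction; in the dS/AdS setting positivity is only known in special situations such as Remark \ref{Brown-York_positive}). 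Indeed, if your ``identically zero'' claim were true, the second-variation result Theorem \ref{minimize_self}, with its nontrivial rigidity characterization of the equality case, would be vacuous.

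Your fallback sketch is the correct route and is what the paper actually does: one varies the reference Hamiltonian $\frak{H}_1=\int\Omega\widehat H\,d\widehat\Sigma$ and the expression $\frak{H}_2$ (in which $|H_0|$ and $\alpha_{H_0}$ are \emph{held fixed} at their $s=0$ values, only $\tau$ and $\Omega$ varying) and proves $\delta\frak{H}_1=\delta\frak{H}_2$. But in your proposal this is only named as a safety net, not executed, and it is where all the substance lies: rewriting $\delta\frak{H}_2$ via Propositions \ref{proposition_mean_curvature_projection} and \ref{connection_reference} as an integral over $\widehat\Sigma$, decomposing $\delta\widehat X=\alpha^a\partial_a\widehat X+\beta\nu$, using the variation formulas \eqref{first_variation_metric}--\eqref{second_variation_metric} for $\hat\sigma$ and $\widehat H$, and closing the argument with the integration-by-parts identities \eqref{equality_b}--\eqref{equality_a}, which rely on the static equation through \eqref{induced_laplace} and \eqref{mix_derivative}. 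As it stands, the proposal's primary argument fails and the viable alternative is left undeveloped.
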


\begin{proof} Denote $\frac{d}{ds}|_{s=0}$ by $\delta$ and set 
\[\frak{H}_1= \int \Omega \widehat H d \widehat \Sigma \]
and 
\[\begin{split} \frak{H}_2&= \int  \Big [ \sqrt{(1+\Omega^2| \nabla \tau|^2) |H_0|^2  \Omega^2 + div(\Omega^2 \nabla \tau)^2 } \\
&-   div(\Omega^2 \nabla \tau)  \sinh^{-1} \frac{ div(\Omega^2 \nabla \tau) }{\Omega |H_0|\sqrt{1+\Omega^2| \nabla \tau|^2} }   - \Omega^2 \alpha_{H_0} (\nabla \tau)  \Big ] d \Sigma\end{split}.\]
It suffices to prove that $\delta \frak{H}_1=\delta \frak{H}_2$,
where for the variation of $\frak{H}_2$, it is understood that $H_0$ and $\alpha_{H_0}$ are fixed at their values at the initial surface  $X_0(\Sigma)$ and only $\tau$ and $\Omega$ are varied. We compute the variation of $\frak{H}_2$, rewrite it as an integral on the projection $\widehat \Sigma$, and then compare with the variation of $\frak{H}_1$ using the identities in Section 2.  

It is convenient to rewrite   $\frak{H}_1$  and $\frak{H}_2$ in terms of the following two quantities: $A = \Omega \sqrt{1+\Omega^2|\nabla \tau|^2}$ and $B= div(\Omega^2 \nabla \tau)$. In terms of $A$ and $B$ 
\[
\begin{split}
\frak{H}_1= &\int \widehat H A \,d \Sigma\\
 \frak{H}_2=&\int \left[ \sqrt{|H_0|^2A^2+B^2} - B \sinh^{-1} \frac{B}{|H_0| A}  - \alpha_{H_0}(\Omega^2 \nabla \tau) \right] d \Sigma. 
\end{split}
\] 
As a result, we have
\[
\begin{split}
 \delta \frak{H}_2 
=&\int \left [\delta A(\frac{|H_0|^2A}{\sqrt{|H_0|^2A^2+B^2}} + \frac{B^2}{A\sqrt{|H_0|^2A^2+B^2}}) \right]d\Sigma 
-\int \left[ (\delta B)  \sinh^{-1} \frac{B}{|H_0|A}+\alpha_{H_0}(\delta( \Omega^2 \nabla \tau)) \right] d \Sigma\\
=&\mbox{I}-\mbox{II}
\end{split}
\]

By \eqref{gauge_change} and $\sinh \theta = -\frac{B}{|H_0|A}$, integrating by parts gives
\[\mbox{II}=\int [\delta B (-\theta)+\alpha_{H_0}(\delta( \Omega^2 \nabla \tau))] d\Sigma =\int \left[ \delta(\Omega^2 \nabla\tau)\cdot \nabla\theta+\alpha_{H_0}(\delta( \Omega^2 \nabla \tau))\right] d\Sigma=\int \alpha_{\breve e_3} (\delta( \Omega^2 \nabla \tau)) d\Sigma.
\]

On the other hand, we simplify the integrand of $\mbox{I}$ using \eqref{gauge_angle}, \[\frac{|H_0|^2A}{\sqrt{|H_0|^2A^2+B^2}} + \frac{B^2}{A\sqrt{|H_0|^2A^2+B^2}}=\frac{\sqrt{|H_0|^2A^2+B^2}}{A}=-\langle H_0, \breve{e}_3\rangle.\]

Therefore, by  \eqref{relation_mean_curvature}, $\mbox{I}$ is equal to 
\[
\begin{split}
& \int   (-\langle H_0, \breve e_3\rangle)\delta A d\Sigma \\
=& \int   [\widehat H + \frac{\Omega \, \alpha_{\breve e_3} (\nabla \tau)  }{ \sqrt{1+\Omega^2| \nabla \tau|^2}}]\delta A d\Sigma\\
= & \int \widehat H\delta Ad\Sigma + \int   \left[\frac{(\delta \Omega) \Omega^3|\nabla \tau|^2 + \Omega^4 \nabla \tau \nabla \delta \tau}{1+\Omega^2|\nabla \tau|^2}( \alpha_{\breve{e}_3}(\nabla\tau)) +( \delta \Omega)  \Omega  \alpha_{\breve{e}_3}(\nabla\tau) \right] d\Sigma.
\end{split}
\]
and 
\begin{equation} \label{del_H2}
\begin{split}
 \delta\frak{H}_2&=  \int \widehat H\delta Ad\Sigma  + \int \left[  \frac{(\delta \Omega) \Omega^3|\nabla \tau|^2 + \Omega^4 \nabla \tau \nabla \delta \tau}{1+\Omega^2|\nabla \tau|^2}( \alpha_{\breve{e}_3}(\nabla\tau)) 
- \alpha_{\breve{e}_3}(\Omega\delta \Omega  \nabla\tau + \Omega^2 \nabla \delta \tau)\right] d\Sigma\\
&=  \int \widehat H\delta Ad\Sigma  - \int  (\alpha_{\breve{e}_3})_a (\sigma^{ac} -\frac{\Omega^2 \nabla^a \tau \nabla^c \tau}{1+\Omega^2|\nabla \tau|^2})(\Omega \delta \Omega  \tau_c+ \Omega^2 \delta \tau_c) d\Sigma \\
&=  \int \widehat H\delta Ad\Sigma  +\int - (\alpha_{\breve{e}_3})_a \hat \sigma^{ac}( \Omega\delta \Omega  \tau_c+ \Omega^2 \delta \tau_c) d\Sigma.
\end{split}
\end{equation}
Applying  Proposition \ref{connection_reference}, the second integral in the last line can be rewritten as

\[
\begin{split}
& \int   \sqrt{1+\Omega^2| \nabla \tau|^2}( \breve e_3 (\Omega) \tau_a - \Omega \hat  \nabla^b \tau \hat h_{ab}) \hat \sigma^{ac}(\Omega \delta \Omega  \tau_c+ \Omega^2 \delta \tau_c) d \Sigma  \\
= & \int [\breve e_3(\Omega)\hat \sigma^{ab}- \Omega \hat h^{ab} ]  ( \Omega \delta \Omega  \tau_a \tau_b + \Omega^2 \tau_a \delta \tau_b)  d \widehat \Sigma \\
= &  \frac{1}{2}\int [\breve e_3(\Omega)\hat \sigma^{ab}- \Omega \hat h^{ab} ]  (\delta \hat  \sigma)_{ab}  d \widehat \Sigma.
\end{split}
\]

On the other hand,  as $\Omega d\widehat\Sigma=A d\Sigma$ and $\delta d\Sigma=0$,
\begin{equation}\label{del_H1}
\begin{split}
\delta\frak{H}_1=   \int \widehat H\delta Ad\Sigma  +  \int \Omega \delta \widehat H   d \widehat \Sigma . \\
\end{split}
\end{equation}

To prove $\delta \frak{H}_1=\delta\frak{H}_2$, by  \eqref{del_H2} and \eqref{del_H1}, it suffices to show
\begin{equation}\label{eq_hat}\int \Omega \left[ \delta \widehat H +\frac{1}{2}  \hat h^{ab} (\delta \hat  \sigma)_{ab}\right]d \widehat \Sigma=\frac{1}{2}\int \left[\breve e_3(\Omega)\hat \sigma^{ab}  (\delta \hat  \sigma)_{ab}\right]  d \widehat \Sigma.\end{equation}

We decompose $\delta \widehat X$ into tangential and normal parts to $\widehat \Sigma$. Let
\[ \delta \widehat X = \alpha^a \frac{\partial \widehat X}{\partial v^a} + \beta \nu.  \]
For the first and second variations of the induced metric (see \cite[Section 6]{Wang-Yau2} for the Euclidean case), we have
\begin{align}
\label{first_variation_metric} (\delta \hat \sigma)_{ab}   =&2 \beta \hat h_{ab} + \hat \nabla_a( \alpha^c \hat \sigma_{cb}) + \hat \nabla_b (\alpha^c \hat \sigma_{ca}) \\
\label{second_variation_metric_1}  \delta \widehat H =& - \hat h^{ab}  (\delta \hat \sigma)_{ab} - \widehat  \Delta \beta  - 2 \kappa \beta +  \hat h_{ac} \hat \nabla^a \alpha^c +\beta \hat \sigma^{ab} \hat \sigma^{dc} \hat h_{ac} \hat h_{bd}+\hat \nabla^b (\alpha^c \hat h_{bc}).
\end{align}

We derive from \eqref{first_variation_metric} and \eqref{second_variation_metric_1}
\begin{equation}
\label{second_variation_metric}  \delta \widehat H +\frac{1}{2} \hat h^{ab}  (\delta \hat \sigma)_{ab} =- \widehat \Delta \beta - 2 \kappa \beta +   \hat \nabla^b( \alpha^c \hat h_{cb}).\end{equation}
\eqref{eq_hat} is thus equivalent to 
\[
\begin{split}
\int  \Omega  [- \widehat \Delta \beta - 2 \kappa \beta +  \hat \nabla^b( \alpha^c \hat h_{cb})] d \widehat \Sigma=\int \breve e_3(\Omega) [ \beta \widehat H + \hat \nabla^b( \alpha^c \hat \sigma_{cb})]  d \widehat \Sigma . 
\end{split}
\]
The above equality follows from the following two identities:
\begin{align}
\label{equality_b}\int \breve e_3(\Omega)  \beta \widehat H  d \widehat \Sigma = & \int  \Omega  [- \widehat \Delta \beta - 2 \kappa \beta]  d \widehat \Sigma \\
\label{equality_a}\int \breve e_3(\Omega) \hat \nabla^b( \alpha^c \hat \sigma_{cb}) d \widehat \Sigma = & \int  \Omega    \hat \nabla^b ( \alpha^c \hat h_{cb}) d \widehat \Sigma,
   \end{align} which can be derived by integrating by parts and applying \eqref{induced_laplace} and \eqref{mix_derivative}. \end{proof}
   
\begin{definition} The quasi-local energy density with respect to $(X, T_0)$ is defined to be 
\begin{equation} \label{rho} \begin{split}\rho &= \frac{\sqrt{|H_0|^2 +\frac{(div \Omega^2 \nabla \tau)^2}{\Omega^2+\Omega^4 |\nabla \tau|^2}} - \sqrt{|H|^2 +\frac{(div \Omega^2 \nabla \tau)^2}{\Omega^2+\Omega^4 |\nabla \tau|^2}} }{ \Omega\sqrt{1+ \Omega^2|\nabla \tau|^2}}. \end{split}\end{equation}
\end{definition}
We derive the following formula for the first variation of the quasi-local energy.
\begin{theorem} \label{thm_first_variation_graph}
Let $\Sigma$ be a surface in a physical spacetime with the data  $(\sigma,|H|, \alpha_H)$. Let $X_0$ be an isometric embedding of $\sigma$ into the reference spacetime and let $(|H_0|, \alpha_{H_0})$
 be the corresponding data on $X_0(\Sigma)$. Consider a family of isometric embeddings $X(s)$, $-\epsilon<s<\epsilon$ such that $X(0)=X_0$. Then we have
\begin{equation}\label{first_variation_graph}
\begin{split}
   & \frac{d}{ds}|_{s=0} E(\Sigma, X(s),\frac{\partial}{\partial t}) \\
=& \frac{1}{8 \pi} \int_{\Sigma} (\delta \tau)   div\left [  \Omega^2 \nabla \sinh^{-1} \frac{\rho div (\Omega^2 \nabla \tau)}{|H_0||H|}  - \rho \Omega^4 \nabla \tau +\Omega^2(\alpha_{H_0} - \alpha_H) \right ]   d \Sigma \\
 & +\frac{1}{8 \pi} \int_{\Sigma} \delta X^i \bar\nabla_i \Omega \left [ \rho  \Omega (1+ 2 \Omega^2|\nabla \tau|^2)  -2  \Omega \nabla \tau \nabla \sinh^{-1} \frac{\rho div (\Omega^2 \nabla \tau)}{|H_0||H|}  + (\alpha_{H} - \alpha_{H_0})(2 \Omega \nabla \tau)  \right ]   d \Sigma,
\end{split}  
\end{equation} where $\delta\tau=\frac{d}{ds}|_{s=0} \tau(s)$ and $\delta X^i=\frac{d}{ds}|_{s=0} X^i(s)$.
\end{theorem}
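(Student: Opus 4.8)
The plan is to split the quasi-local energy into the reference Hamiltonian $\frak{H}_1 = \int \Omega \widehat H\, d\widehat\Sigma$ and the physical Hamiltonian $\frak{H}_2^{phys}$ (the second integral in \eqref{energy_fix_chart_base}, written with $|H|, \alpha_H$), and to compute $\delta\frak{H}_1$ and $\delta\frak{H}_2^{phys}$ separately, then combine. The essential point is that the variation of the reference Hamiltonian has \emph{already} been packaged by Theorem \ref{thm_own_critical}: there, $\delta\frak{H}_1 = \delta\frak{H}_2$ where $\frak{H}_2$ is the \emph{reference} version of the physical Hamiltonian (built from $|H_0|, \alpha_{H_0}$), with $|H_0|, \alpha_{H_0}$ held fixed and only $\tau, \Omega$ varied. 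So $\delta\frak{H}_1$ can be replaced by the variation of an explicit integrand in $(\tau,\Omega)$, and $8\pi\,\delta E = \delta\frak{H}_1 - \delta\frak{H}_2^{phys}$ becomes the variation of
\[
\int \Big[\sqrt{(1+\Omega^2|\nabla\tau|^2)\Omega^2}\big(\cdots|H_0|\cdots\big) - \big(\cdots|H|\cdots\big)\Big] d\Sigma
\]
where the $|H_0|$-block is varied with $|H_0|,\alpha_{H_0}$ fixed and the $|H|$-block has $|H|,\alpha_H$ genuinely fixed (since those come from the physical spacetime and do not depend on $X(s)$). In other words both blocks are now varied the same way, and one differentiates term by term in $A=\Omega\sqrt{1+\Omega^2|\nabla\tau|^2}$, $B=\operatorname{div}(\Omega^2\nabla\tau)$, and $\Omega^2\alpha(\nabla\tau)$, exactly as in the proof of Theorem \ref{thm_own_critical} but keeping both the $H_0$- and $H$-contributions.

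Concretely, I would first record $\delta A = \Omega\,\delta\Omega(1+2\Omega^2|\nabla\tau|^2)\,/\sqrt{1+\Omega^2|\nabla\tau|^2}\cdot(\cdots)$ — more usefully, express $\delta A$ and $\delta B$ in terms of $\delta\tau$ and $\delta\Omega = \delta X^i\,\bar\nabla_i\Omega$ (the latter because $\Omega$ is a fixed function on the static slice, so its variation along $\Sigma$ comes only through the variation of the spatial components $X^i$). Then, writing $\theta_0, \theta$ for the two gauge angles as in \eqref{gauge_angle}, the variation of each block has the schematic form $\int[(-\langle H_\bullet,\breve e_3\rangle)\delta A - B_{,\text{eff}}\,\delta(\ )\cdots]$, and after integrating by parts in the $\delta B$ and $\alpha$ terms (using $\sinh\theta_\bullet = -B/(|H_\bullet|A)$ and $\alpha_{H_\bullet} = \alpha_{\breve e_3}+d\theta_\bullet$ from \eqref{gauge_change}) the $\alpha_{\breve e_3}$ pieces cancel between the two blocks, leaving only terms involving $\rho$, $\sinh^{-1}\frac{\rho B}{|H_0||H|}$, and $\alpha_{H_0}-\alpha_H$. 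Collecting the coefficient of $\delta\tau$ (after one more integration by parts to produce a divergence) yields the first line of \eqref{first_variation_graph}; collecting the coefficient of $\delta X^i$, which arises only through $\delta\Omega = \delta X^i\bar\nabla_i\Omega$ and its derivative $\nabla\delta\Omega$, yields the second line after integrating by parts to remove derivatives of $\delta\Omega$ — this is where the factor $\Omega(1+2\Omega^2|\nabla\tau|^2)$ and the $2\Omega\nabla\tau$ combinations appear, tracking the $\delta A/\delta\Omega$ and $\delta B/\delta\Omega$ dependence. The identities \eqref{relation_lower_metric}, \eqref{relation_upper_metric}, \eqref{relation_gradient}, \eqref{relation_area} will be needed to pass between $\sigma$- and $\hat\sigma$-quantities and to simplify $\delta A$.

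The main obstacle I anticipate is bookkeeping rather than conceptual: the reference block and the physical block are varied under slightly different conventions (in the reference block $|H_0|$ and $\alpha_{H_0}$ are frozen but are \emph{themselves} the data of $X_0(\Sigma)$, so one must be careful that the cancellation of the $\alpha_{\breve e_3}$-terms is legitimate and that no hidden variation of $|H_0|$ sneaks in), and the $\delta\Omega$-terms require keeping track of both $\delta\Omega$ and $\nabla\delta\Omega$ through several integrations by parts without dropping curvature terms. A secondary subtlety is that, unlike the Minkowski case, $\delta\Omega \ne 0$, so the second line of \eqref{first_variation_graph} is genuinely new and one cannot simply quote \cite{Wang-Yau2}; verifying that all $\delta\Omega$-contributions from the term-by-term differentiation assemble into exactly the stated bracket is the delicate step. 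Once the $\delta\tau$- and $\delta X^i$-coefficients are isolated and shown to be divergences (resp. multiples of $\bar\nabla_i\Omega$), the theorem follows.
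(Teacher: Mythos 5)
Your proposal follows essentially the same route as the paper's proof: Theorem \ref{thm_own_critical} is used to trade the variation of the reference Hamiltonian for that of the frozen $(|H_0|,\alpha_{H_0})$-block, so that $8\pi\,\delta E$ becomes the variation of the difference of two explicit blocks in $A=\Omega\sqrt{1+\Omega^2|\nabla\tau|^2}$ and $B=\operatorname{div}(\Omega^2\nabla\tau)$ with $|H_0|,|H|,\alpha_{H_0},\alpha_H$ all held fixed, followed by term-by-term differentiation with $\delta\Omega=\delta X^i\bar\nabla_i\Omega$ and integration by parts to isolate the $\delta\tau$- and $\delta X^i$-coefficients. One small correction: after this reduction no $\breve e_3$-frame bookkeeping is needed (and a cancellation of $\alpha_{\breve e_3}$ pieces between the two blocks is not the mechanism, since the physical block carries no $\breve e_3$); the $\rho$ and $\sinh^{-1}\frac{\rho B}{|H_0||H|}$ combinations arise directly from subtracting the two blocks, via the identity $\sinh^{-1}\frac{B}{|H|A}-\sinh^{-1}\frac{B}{|H_0|A}=\sinh^{-1}\bigl[\frac{B}{|H||H_0|A^{2}}\bigl(\sqrt{A^{2}|H_0|^{2}+B^{2}}-\sqrt{A^{2}|H|^{2}+B^{2}}\bigr)\bigr]$.
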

\begin{proof}
 Let $A = \Omega \sqrt{1+\Omega^2|\nabla \tau|^2}$ and $B= div(\Omega^2 \nabla \tau)$.
In terms of $A$ and $B$, 
\[  \rho = \frac{\sqrt{A^2 |H_0|^2+B^2}-\sqrt{A^2|H|^2+B^2}}{A^2}.\]

Write 
\[\begin{split}
 & 8 \pi  E(\Sigma, X(s),\frac{\partial}{\partial t})- 8\pi E( X_0(\Sigma), X(s),\frac{\partial}{\partial t})\\
=& \int  \Big [ \sqrt{A^2(s) |H_0|^2+B^2(s)} -   B(s)  \sinh^{-1} \frac{B(s)}{ |H_0| A(s) }
  - \Omega^2(s) \alpha_{H_0} (\nabla \tau(s))  \Big ] d \Sigma\\
-&\int  \Big [ \sqrt{A^2(s)  |H|^2+B^2(s) } -  B(s)  \sinh^{-1} \frac{B(s) }{ |H| A(s) }
  - \Omega^2(s) \alpha_{H} (\nabla \tau(s))  \Big ] d \Sigma,\\
\end{split}\] where $A(s) = \Omega(s) \sqrt{1+\Omega^2(s)|\nabla \tau(s)|^2}$ and $B(s)= div(\Omega^2(s) \nabla \tau(s))$. By Theorem \ref{thm_own_critical},  $\delta  E( X_0(\Sigma), X,\frac{\partial}{\partial t})=0$. Therefore,  in terms of $A$ and $B$, $8 \pi  \delta E(\Sigma, X,\frac{\partial}{\partial t})$ is equal to
\[
\begin{split}
&\int (\delta A)(\frac{\sqrt{A^2 |H_0|^2  + B^2 }-\sqrt{A^2|H|^2  \Omega^2 + B)^2}}{A}) d \Sigma\\
+& \int (\delta B) ( \sinh^{-1} \frac{B}{|H|A} -\sinh^{-1} \frac{B}{|H_0|A} ) + (\alpha_H-\alpha_{H_0})(2 \Omega \delta \Omega \nabla \tau + \Omega^2 \nabla \delta \tau) d \Sigma  
\end{split}
\]
A direct computation shows that 
\[  \sinh^{-1} \frac{B}{|H|A} -\sinh^{-1} \frac{B}{|H_0|A}  = \sinh^{-1}[\frac{B}{|H||H_0|A^2} (\sqrt{A^2 |H_0|^2+B^2}-\sqrt{A^2|H|^2+B^2}) ].\]

On the other hand, 
\[  
\begin{split}
\delta A = &(\delta \Omega) \frac{1+ 2\Omega^2 |\nabla \tau|^2}{\sqrt{1+ \Omega^2 |\nabla \tau|^2}} + \frac{\Omega^3 \nabla \tau \nabla \delta \tau}{\sqrt{1+ \Omega^2 |\nabla \tau|^2}}\\
\delta B= & div(2 \Omega \delta \Omega \nabla \tau + \Omega^2 \nabla \delta \tau).
\end{split}
\]
The theorem follows from integration by parts, collecting terms, and $ \delta \Omega = \delta X^i \bar \nabla_i \Omega$.
\end{proof}
$\delta X^i$ and $\delta \tau$ are constrained by the linearized isometric embedding equation 
\[  \delta X^i \bar \nabla_i \Omega^2 \tau_a \tau_b+ \Omega^2 (\tau_a\delta \tau_b + \tau_b \delta \tau_a) = g_{ij}X^i_a \delta X^j_b + \delta X^k \partial_k g_{ij}X^i_a  X^j_b .\]

\section{Second variation and local minimum of the quasi-local energy}
First, we prove the following lemma about surfaces in the static slice of the reference spacetime. A similar and related inequality was obtained in \cite{Kwong-Miao}. 
\begin{lemma} \label{Reilly-Positivity}
Let $\Sigma$ be a  convex surface  in the static slice of the reference spacetime. Let $H_0$ and $h_{ab}$  be the mean curvature and second fundamental form of $\Sigma$. Then for any smooth function $f$ on $\Sigma$, the integral 
\begin{equation}  \label{second_variation_positive}\int \left \{ \frac{[div(\Omega^2 \nabla f)]^2}{H_0 \Omega} - \Omega^3 h^{ab} f_af_b + \Omega^2 |\nabla f|^2 e_3 (\Omega) \right \} d\Sigma\end{equation} is non-negative and vanishes if and only if $f$ can be smoothly extended to a smooth function $\bar{f}$ in the region enclosed by $\Sigma$
that satisfies
\begin{equation}\label{static} \bar \nabla^2 (\bar{f}\Omega) + \kappa (\bar{f}\Omega) g=0.\end{equation}
In particular, $\bar{f}\Omega$ is another static potential \eqref{static_potential_hessian}.
\end{lemma}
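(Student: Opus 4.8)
The plan is to establish the inequality by a Reilly-type argument, solving an auxiliary boundary value problem in the region $\Omega_\Sigma$ enclosed by $\Sigma$ and integrating the Bochner/Reilly formula for the static metric $g$. Let $u$ be the solution of the Dirichlet problem
\[
\bar\nabla^2(u) + \kappa\, u\, g = 0 \text{ in a suitable reformulation}, \qquad u|_\Sigma = f\Omega,
\]
more precisely $u$ solving $\bar\Delta u = -3\kappa u$ with boundary value $f\Omega$ (so that solutions of \eqref{static} are exactly the ones where equality will hold). First I would recall the Reilly formula on the space form $(\Omega_\Sigma, g)$: for any function $u$,
\[
\int_{\Omega_\Sigma} \left[ (\bar\Delta u)^2 - |\bar\nabla^2 u|^2 - Ric(\bar\nabla u, \bar\nabla u) \right] dV = \int_\Sigma \left[ H_0 (\partial_\nu u)^2 + 2(\partial_\nu u)\Delta_\Sigma u + h^{ab} u_a u_b \right] dA,
\]
where $\nu = e_3$ is the outward normal, $\Delta_\Sigma$ the boundary Laplacian, and $Ric = 2\kappa g$ since the static slice is a space form of curvature $\kappa$.

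The key algebraic step is to rewrite $\int_\Sigma[\,\cdot\,]$ from \eqref{second_variation_positive} in a form matching the Reilly boundary term. I would set $u$ so that $u|_\Sigma = \Omega f$ and use \eqref{induced_laplace}, \eqref{mix_derivative} to convert derivatives of $\Omega f$ on $\Sigma$ into the combination appearing in \eqref{second_variation_positive}; writing $\partial_\nu u = e_3(\Omega) f + \Omega\, e_3(f)$ where $e_3(f)$ is the yet-undetermined normal derivative, and noting $\nabla_a(\Omega f) = f\nabla_a\Omega + \Omega\nabla_a f$, one expands $h^{ab}(\Omega f)_a(\Omega f)_b$, uses $\nabla_a e_3(\Omega) = h_{ab}\nabla^b\Omega$ and $\Delta_\Sigma\Omega = -2\kappa\Omega - H_0 e_3(\Omega)$ to integrate by parts. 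After choosing $u$ to solve $\bar\Delta u = -3\kappa u$ in the interior (so that on the boundary the normal derivative is determined by $f\Omega$ and the solvability of this PDE; here convexity of $\Sigma$ and the sign of $\kappa$ enter to guarantee solvability and the correct sign), the left-hand side of Reilly becomes $\int_{\Omega_\Sigma}[9\kappa^2 u^2 - |\bar\nabla^2 u|^2 - 2\kappa|\bar\nabla u|^2]$. Completing the square, $|\bar\nabla^2 u|^2 \ge \frac{1}{3}(\bar\Delta u)^2 = 3\kappa^2 u^2$ is not sharp enough; instead I use $|\bar\nabla^2 u + \kappa u g|^2 \ge 0$, i.e. $|\bar\nabla^2 u|^2 \ge -2\kappa u\bar\Delta u - 3\kappa^2 u^2 = 6\kappa^2 u^2 - 3\kappa^2 u^2 = 3\kappa^2 u^2$, which gives $9\kappa^2 u^2 - |\bar\nabla^2 u|^2 - 2\kappa|\bar\nabla u|^2 \le 6\kappa^2 u^2 - 2\kappa|\bar\nabla u|^2$, and a further integration by parts $\int 2\kappa|\bar\nabla u|^2 = \int(-2\kappa u\bar\Delta u) + \text{bdry} = \int 6\kappa^2 u^2 + \text{bdry}$ cancels the bulk, leaving a nonpositive bulk term $-\int_{\Omega_\Sigma}|\bar\nabla^2 u + \kappa u g|^2$ plus boundary contributions. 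Tracking the boundary contributions carefully, these reassemble into exactly $-$ (the integral \eqref{second_variation_positive}), after also using that the first term $\frac{[div(\Omega^2\nabla f)]^2}{H_0\Omega}$ comes from completing a square in $\partial_\nu u$ against the Reilly term $H_0(\partial_\nu u)^2 + 2\partial_\nu u\,\Delta_\Sigma u$; one chooses the interior solution precisely so that $H_0\Omega\,\partial_\nu u + (\text{the } div(\Omega^2\nabla f) \text{ term})$ is a perfect square, which is where the factor $\frac{1}{H_0\Omega}$ and the hypothesis $H_0 > 0$ (convexity) are used.

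Putting these together yields
\[
\int_\Sigma\left\{\frac{[div(\Omega^2\nabla f)]^2}{H_0\Omega} - \Omega^3 h^{ab}f_af_b + \Omega^2|\nabla f|^2 e_3(\Omega)\right\} dA = \int_{\Omega_\Sigma}|\bar\nabla^2 u + \kappa u g|^2\, dV + (\text{a nonneg. boundary square}) \ge 0,
\]
and equality forces $\bar\nabla^2 u + \kappa u g = 0$ throughout $\Omega_\Sigma$ with $u|_\Sigma = \Omega f$, i.e. setting $\bar f = u/\Omega$ we get \eqref{static} and $\bar f\Omega = u$ is a static potential. Conversely, if $f$ extends to $\bar f$ with $\bar f\Omega$ satisfying \eqref{static}, then taking $u = \bar f\Omega$ makes both the bulk square and the boundary square vanish, so the integral is zero.

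The main obstacle I expect is the bookkeeping in matching the Reilly boundary term to \eqref{second_variation_positive}: one must choose the right interior extension $u$ (not simply harmonic, but adapted to $\kappa$ and to making the $\partial_\nu u$-terms a perfect square against $H_0\Omega$), and then carefully integrate by parts on $\Sigma$ using both \eqref{induced_laplace} and \eqref{mix_derivative} to turn $\Delta_\Sigma(\Omega f)$ and $h^{ab}(\Omega f)_a(\Omega f)_b$ into the stated $div(\Omega^2\nabla f)$, $h^{ab}f_af_b$, and $|\nabla f|^2 e_3(\Omega)$ terms. The convexity hypothesis is what makes the non-negativity genuine (it controls both the solvability of the auxiliary problem and the sign of the remaining boundary square); without it the argument only gives an identity, not an inequality.
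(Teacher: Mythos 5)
Your skeleton is the right one — substitute $F=\Omega f$, extend by solving $\bar\Delta \bar F+3\kappa \bar F=0$ with Dirichlet data $F$, and express \eqref{second_variation_positive} as a nonnegative bulk square plus a nonnegative boundary square — but the key identity you rely on is the wrong one, and the step ``tracking the boundary contributions carefully, these reassemble into exactly \eqref{second_variation_positive}'' is where the argument breaks. With the \emph{classical, unweighted} Reilly formula applied to $u$ with $u|_\Sigma=\Omega f$ and your interior equation, the only locally determined quantity you can extract (after isolating the globally determined normal derivative $\partial_\nu u$ in the unique completed square $H_0\bigl(\partial_\nu u+\frac{\Delta u+\kappa u}{H_0}\bigr)^2$) is $\int_\Sigma\bigl[\frac{(\Delta u+\kappa u)^2}{H_0}-h^{ab}u_au_b\bigr]d\Sigma$, and this is \emph{not} equal to \eqref{second_variation_positive}. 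Concretely, take $\kappa=-1$ and $\Sigma$ a geodesic sphere of radius $s_0$ in $\mathbb H^3$ centered where $\Omega$ is minimal, so $\Omega\equiv\cosh s_0$ on $\Sigma$: the coefficient of $(\Delta f)^2$ in \eqref{second_variation_positive} is $\Omega^3/H_0$, while in your boundary remainder it is $\Omega^2/H_0$; testing on high-frequency spherical harmonics shows the two integrals differ by the factor $\cosh s_0$, and no tangential integration by parts can repair a mismatch in this top-order coefficient. What your argument proves is a different (Kwong--Miao type) inequality for the unweighted form, which does not imply the lemma; note also that the $e_3(\Omega)$-term and all the $\Omega$-weights in \eqref{second_variation_positive} have no source in the unweighted boundary term.

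The missing ingredient is the \emph{weighted} Reilly formula with static potential $\Omega$ (Qiu--Xia, see also Li--Xia), which is what the paper uses:
\[
\begin{split}
&\int_M \Omega\bigl[(\bar\Delta\bar F+3\kappa\bar F)^2-|\bar\nabla^2\bar F+\kappa\bar F g|^2\bigr]\\
&\quad=\int_\Sigma\Bigl\{\Omega\bigl(2e_3(F)\Delta F+H_0e_3(F)^2+h^{ab}F_aF_b+4\kappa Fe_3(F)\bigr)+e_3(\Omega)\bigl(|\nabla F|^2-2\kappa F^2\bigr)\Bigr\}d\Sigma.
\end{split}
\]
The weight $\Omega$ in both the bulk and the boundary is precisely what produces the $\Omega$-weighted terms and the $e_3(\Omega)$-term of \eqref{second_variation_positive}; combining this with the substitutions from \eqref{induced_laplace} and \eqref{mix_derivative} (namely $e_3(\Omega)=-(\Delta\Omega+2\kappa\Omega)/H_0$ and $h^{ab}\Omega_b=\nabla^a e_3(\Omega)$), the paper arrives at \eqref{second_variation_positive} being bounded below by $\int_\Sigma\frac{\Omega}{H_0}\bigl[\Delta F+2\kappa F+H_0e_3(F)\bigr]^2 d\Sigma\ge 0$, with equality forcing $\bar\nabla^2\bar F+\kappa\bar F g=0$, i.e.\ \eqref{static}; your equality discussion is fine once this identity is in place. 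Two smaller points: convexity enters chiefly through $H_0>0$ (the sign of the completed square and of the $1/(H_0\Omega)$ term), not through making ``the boundary square'' nonnegative by itself; and the solvability of the Dirichlet problem for $\bar\Delta\bar F+3\kappa\bar F=0$ is not a consequence of convexity as you suggest but is the issue addressed by the citation to Kwong--Miao (it is automatic for $\kappa=-1$, and needs the eigenvalue discussion for $\kappa=1$).
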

\begin{proof}
Let $f =\frac{F}{\Omega}$ and $\nabla f = \frac{\nabla F}{\Omega} - \frac{F \nabla \Omega}{ \Omega ^2}$. We compute
\begin{equation}\label{1st_term}\int [\frac{[div(\Omega^2 \nabla f)]^2}{H_0 \Omega}]d\Sigma=\int [\frac{1}{H_0\Omega} (\Omega\Delta F-F\Delta\Omega)^2]d\Sigma.\end{equation}

On the other hand, 
\[  
- \Omega^3 h^{ab} f_af_b =- \Omega h^{ab} F_aF_b +h^{ab}\Omega_b(- \frac{F^2}{\Omega} \Omega_a + 2 FF_a).  
\]
Using \eqref{mix_derivative}, $h^{ab}\Omega_b=\nabla_b e_3(\Omega)$, and integrating by parts on $\Sigma$, we obtain 
\begin{equation}\label{2nd_term}  \int ( - \Omega^3 h^{ab} f_af_b ) d\Sigma  
= \int \left\{- \Omega h^{ab} F_aF_b +e_3(\Omega) div (\frac{F^2\nabla \Omega}{\Omega}-2 F\nabla F)   \right\}d\Sigma. 
\end{equation}

Let $M$ be the region on the static slice enclosed by $\Sigma$. We need the following  Reilly formula with static potential $\Omega$ from \cite{Qiu-Xia} (see also \cite{Li-Xia}):
\[
\begin{split}
   & \int_M \Omega [(\bar \Delta \bar{F} + 3 \kappa \bar{F})^2  - | \bar \nabla ^2 \bar{F} + \kappa \bar{F} g|^2] \\
=& \int \left\{ \Omega (2 e_3(F) \Delta F + H_0 e_3(F)^2 + h^{ab} F_aF_b + 4\kappa F e_3(F)) + e_3(\Omega) (|\nabla F|^2 - 2 \kappa F^2)\right\}d\Sigma,
\end{split}
\] where $\bar{F}$ is a smooth extension of $F$ to $M$. 
Extending $F$ by solving the elliptic PDE $\bar \Delta \bar{F} + 3 \kappa \bar{F} = 0$ with boundary data $F$ on $\Sigma=\partial M$ (see \cite{Kwong-Miao} for the solution of the Dirichlet boundary value problem), we have
\[ - \int (\Omega h^{ab} F_aF_b)d \Sigma \ge \int \left\{\Omega [2 e_3(F) \Delta F + H_0 e_3(F)^2  + 4\kappa  F e_3(F)] + e_3(\Omega) (|\nabla F|^2 - 2 \kappa F^2)\right\}d\Sigma.\]

Plugging this into \eqref{2nd_term} and expanding $\Omega^2 |\nabla f|^2 e_3 (\Omega)$ by replacing  $f =\frac{F}{\Omega}$, we obtain

\begin{equation}\label{2nd+3rd}\begin{split} &\int \left \{  - \Omega^3 h^{ab} f_af_b + \Omega^2 |\nabla f|^2 e_3 (\Omega) \right \} d\Sigma\\
&\geq  \int \left\{\Omega [H_0 e_3(F)^2  + 2 e_3(F)(\Delta F+2\kappa F)] + e_3(\Omega) [\frac{F^2}{\Omega}( \Delta\Omega- 2 \kappa \Omega)-2F\Delta F]\right\} d\Sigma.
\end{split}\end{equation}

Replacing $e_3(\Omega) $ by $-\frac{\Delta \Omega + 2 \kappa \Omega}{H_0}$ by \eqref{mix_derivative}, we arrive at 
\[e_3(\Omega) [\frac{F^2 }{\Omega} (\Delta\Omega- 2 \kappa \Omega)-2F\Delta F]=\frac{1}{H_0\Omega}[-F^2(\Delta\Omega)^2+4\kappa^2\Omega^2F^2
+2F\Omega \Delta F\Delta \Omega+4\kappa \Omega^2 F\Delta F].
\] Plugging this into \eqref{2nd+3rd} and recalling \eqref{1st_term}, the integral in question, after completing squares, is equal to 
\[\int \frac{\Omega}{H_0}[\Delta F+2\kappa F+H_0 e_3(F)]^2 d\Sigma\] which is non-negative.  It is clear that when the equality holds,
\[ \bar \nabla^2 (\bar{F}) + \kappa (\bar{F}) g=0,\] and $\bar{f}=\frac{\bar{F}}{\Omega}$ is a smooth extension of $f$. 
\end{proof}
We prove that a convex surface in the static slice of the reference spacetime is a local minimum of its own quasi-local energy. 
\begin{theorem} \label{minimize_self}
Suppose $X(s)=(\tau(s),X^i(s)), s\in (-\epsilon, \epsilon)$ is a family of isometric embeddings of the same metric $\sigma$ into the reference spacetime such that the image of $X(0)$ is a convex surface $\Sigma_0$ in the static slice, then
\[
\begin{split}
\frac{d^2}{ds^2}|_{s=0} E(\Sigma_0,X(s), \frac{\partial}{\partial t}) \ge & 0.\\
\end{split}
\]
In addition, the equality  holds if and only if $f=  \frac{d}{ds}|_{s=0} \tau(s) $ can be smoothly extended to a smooth function $\bar{f}$ in the region enclosed by $\Sigma_0$ that satisfies
\[\bar \nabla ^2 (\bar{f}\Omega) + \kappa (\bar{f}\Omega) g=0.\]
\end{theorem}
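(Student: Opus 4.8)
The plan is to reduce the second variation of $E(\Sigma_0, X(s), \frac{\partial}{\partial t})$ at $s=0$ to exactly the integral appearing in Lemma \ref{Reilly-Positivity} applied to the function $f = \delta\tau = \frac{d}{ds}\big|_{s=0}\tau(s)$. The key structural fact is that $\Sigma_0$ lies in the static slice, so at $s=0$ we have $\tau\equiv 0$, $\nabla\tau\equiv 0$, $B = \mathrm{div}(\Omega^2\nabla\tau) = 0$, $|H_0| = H_0$ (the mean curvature in the slice), $\alpha_{H_0} = 0$, and $A = \Omega$. This makes the first variation vanish (consistent with Theorem \ref{thm_own_critical} specialized, or one checks directly that the integrand of \eqref{first_variation_graph} vanishes when $\tau\equiv 0$), so the second variation is the genuine leading term and is given by differentiating the first-variation formula \eqref{first_variation_graph} once more in $s$.

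First I would differentiate \eqref{first_variation_graph}. Because the bracketed expressions there all carry either a factor of $\nabla\tau$, a factor of $\alpha_{H_0}-\alpha_H$, or the quantity $\sinh^{-1}(\cdots)$, and because $\rho$ at $s=0$ is just $\frac{H_0-|H|}{\Omega}$, many cross-terms die when we set $s=0$ after one more differentiation. The surviving contributions should organize into: (i) a term quadratic in $\delta\tau$ coming from $\frac{d}{ds}$ hitting $\mathrm{div}(\Omega^2\nabla\tau)$ inside $\sinh^{-1}$ and inside $\rho$; (ii) a term involving $\delta X^i\bar\nabla_i\Omega$; and (iii) cross terms. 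The linearized isometric embedding equation at $s=0$, namely $0 = g_{ij}X^i_a\delta X^j_b + \delta X^k\partial_k g_{ij}X^i_aX^j_b$ (the $\tau$-terms drop since $\tau\equiv 0$), says precisely that $\delta X$ is, infinitesimally, a Killing field of the static slice restricted to $\Sigma_0$ — equivalently the $\widehat X$-variation induces $\delta\hat\sigma = 0$. Using this to eliminate the normal part $\beta$ and tangential part $\alpha^a$ of $\delta\widehat X$ via \eqref{first_variation_metric}, one expresses everything in terms of $\delta\tau$ and the geometry of $\Sigma_0$. I would also use \eqref{induced_laplace} and \eqref{mix_derivative} freely to trade $e_3(\Omega)$ for $-\frac{(\Delta+2\kappa)\Omega}{H_0}$ and $\nabla_a e_3(\Omega)$ for $h_{ab}\nabla^b\Omega$, which is what will make the answer match Lemma \ref{Reilly-Positivity}.

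The target is to show
\[
\frac{d^2}{ds^2}\Big|_{s=0} E(\Sigma_0,X(s),\tfrac{\partial}{\partial t}) = \frac{1}{8\pi}\int_{\Sigma_0}\left\{\frac{[\mathrm{div}(\Omega^2\nabla f)]^2}{H_0\Omega} - \Omega^3 h^{ab}f_af_b + \Omega^2|\nabla f|^2 e_3(\Omega)\right\}d\Sigma,
\]
possibly after an integration by parts and discarding of exact divergences; then Lemma \ref{Reilly-Positivity} gives nonnegativity and the stated rigidity characterization immediately (the convexity hypothesis on $\Sigma_0$ is exactly what Lemma \ref{Reilly-Positivity} needs). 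The first term comes from the $\sinh^{-1}$ contribution (expanding $\sinh^{-1}(x)\approx x$ for small argument gives $\frac{B^2}{|H_0||H|}$-type quadratics, and the $\rho B$ combination produces $\frac{B^2}{H_0\Omega}$ after using $\rho\to\frac{H_0-|H|}{\Omega}$ and simplifying — here one must be careful since $|H|\neq H_0$ in general, but the physical data $|H|,\alpha_H$ are held fixed so only $\Omega,\tau$ vary and the algebra closes), the second from the variation of $\alpha_H(\nabla\tau)$ paired against the connection one-form identity, and the third from $\rho\,\Omega(1+2\Omega^2|\nabla\tau|^2)$ differentiated together with the $\delta X^i\bar\nabla_i\Omega$ factor.

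The main obstacle I anticipate is bookkeeping: correctly tracking which terms in the second derivative of \eqref{first_variation_graph} survive at $s=0$ and verifying that the $\delta X^i$-dependent pieces, after imposing the linearized isometric embedding constraint, reassemble into the purely-intrinsic expression $-\Omega^3 h^{ab}f_af_b + \Omega^2|\nabla f|^2 e_3(\Omega)$ rather than leaving residual $\delta X$ dependence. In particular one needs that the second-order part of the isometric embedding equation does not contribute (it would, but its contribution is an exact divergence or is absorbed because $\tau\equiv 0$ kills the coupling between $\delta^2 X$ and the rest). A clean way to handle this, following \cite{Wang-Yau2}, is to split $\delta X$ into the part generating $\delta\tau$ and a part tangent to the slice, observe the latter contributes nothing to second order since $\Sigma_0$ is already a critical configuration in the slice direction (Brown–York is a critical/minimizing reference by Remark \ref{Brown-York_positive} and \cite{Shi-Tam}), and thereby reduce to varying $\tau$ alone with $f=\delta\tau$. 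Once the integral is in the form \eqref{second_variation_positive}, Lemma \ref{Reilly-Positivity} finishes both the inequality and the equality case.
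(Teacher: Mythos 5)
Your overall skeleton is the same as the paper's (differentiate the first variation formula \eqref{first_variation_graph} once more at the critical configuration, reduce to a quadratic form in $f=\delta\tau$, and invoke Lemma \ref{Reilly-Positivity} for both the inequality and the rigidity case), but there are two genuine gaps. First, the disposal of the slice components $\delta X^i$. The linearized isometric embedding equation only says $\delta\hat\sigma=0$; upgrading this to ``$\delta\widehat X$ is the restriction of a Killing field of the static slice'' is precisely the infinitesimal rigidity of convex surfaces in space forms (\cite{pogorelov}), which is where the convexity hypothesis enters and which your ``equivalently'' glosses over. More seriously, your proposed reason that the slice part ``contributes nothing to second order'' (criticality/minimality of the Brown--York type expression, Remark \ref{Brown-York_positive} and \cite{Shi-Tam}) only addresses variations that stay in the slice; it does not control the cross terms between $\delta\tau$ and $\delta X^i$, nor the coupling of $\delta^2 X^i$ to first-order quantities. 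The paper's mechanism is to integrate the Killing field to isometries $\hat A(s)$, extend them to spacetime isometries $A(s)$, pass to $\breve X(s)=A^{-1}(s)X(s)$ (so $\delta\breve X^i=0$ while $\delta\breve\tau=f$), and then verify \eqref{equality_second_variation} using the isometry-invariance of $(|H_0|,\alpha_{H_0})$ together with $\delta|H_0|=0$; some argument of this kind is needed and is missing from your sketch.

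Second, your accounting of where the quadratic form comes from would fail here, because in this theorem the physical data is that of $\Sigma_0$ itself: at $s=0$ one has $|H|=H_0$, $\alpha_H=\alpha_{H_0}=0$, $div(\Omega^2\nabla\tau)=0$, hence $\rho=0$, and after the reduction also $\delta|H_0|=0$, hence $\delta\rho=0$. Consequently the $\sinh^{-1}$ contributions and the ``$\rho B$'' and ``$\rho\,\Omega(1+2\Omega^2|\nabla\tau|^2)\cdot\delta X^i\bar\nabla_i\Omega$'' terms you point to all vanish at second order (your remark that ``$|H|\neq H_0$ in general'' belongs to Theorem \ref{local_min_AdS}, not here). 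In fact the only surviving term when \eqref{first_variation_graph} is differentiated again is the one containing $\Omega^2\alpha_{H_0}$, so the entire second variation equals $-\frac{1}{8\pi}\int(\delta\alpha_{H_0})(\Omega^2\nabla f)\,d\Sigma$, and the technical core of the proof is the computation, with $\delta\widehat X=0$, of
\[
(\delta\alpha_{H_0})_a=\nabla_a\Big(\frac{div(\Omega^2\nabla f)}{\Omega|H_0|}\Big)+\Omega h_{ab}\nabla^b f-f_a\,e_3(\Omega),
\]
obtained from the variations of $H_0$, $e_3$, $e_4$ as in \cite{Chen-Wang-Wang} (or from Proposition \ref{connection_reference} together with \eqref{gauge_change}). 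Your sketch never carries out this variation of the mean-curvature-gauge connection form, and without it the identification of the second variation with \eqref{second_variation_positive}, and hence the application of Lemma \ref{Reilly-Positivity}, is not established.
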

\begin{proof}
Let $H_0(X(s))$ and $\alpha_{H_0}(X(s))$ be the mean curvature vector and the connection 1-form in mean curvature gauge of the image of $X(s)$. For simplicity, set $\delta |H_0| = \frac{d}{ds}|_{s=0} |H_0(X(s))|$ and
$\delta \alpha_{H_0} =\frac{d}{ds}|_{s=0} \alpha_{H_0}(X(s))$. Let $\widehat X(s)=(0,X^i(s))$ be the projection of $X(s)(\Sigma)$ onto the static slice. $\widehat X(s)$ is an isometric embedding of the metric 
\[  \hat \sigma(s)_{ab}= \sigma_{ab} + \Omega^2(s) \tau_a(s) \tau_b(s) \]
into the static slice and $ \delta \hat \sigma = \frac{d}{ds}|_{s=0} \hat \sigma(s)=0$, as $\tau(0)=0$.

From the infinitesimal rigidity of the isometric embeddings into space forms \cite{pogorelov} ,  there is a family of isometries $\hat A(s)$ of the static slice with $\hat A(0)=Id$  such that 
\[
   \delta \hat A =  \delta  \widehat X
\]
along the surface $\Sigma_0$. Here we set $ \delta \hat A = \frac{d}{ds}|_{s=0} \hat A (s)$  and $\widehat X = \frac{d}{ds}|_{s=0} \widehat X (s) $. Moreover, there is a family $A(s)$ of isometries of the reference spacetime whose restriction to the static slice is  the family $\hat A (s)$. Consider the following family of isometric embeddings of $\sigma$ into the reference spacetime:
\[  \breve X(s) = A^{-1}(s) X(s). \]
 Suppose $\breve X (s) = (\breve \tau(s) , \breve X^i(s))$ in the fixed static coordinate, we have \begin{equation} \label{match_first} \frac{d}{ds}|_{s=0} \breve X^i (s)= 0.\end{equation}

We claim that 
\begin{equation}\label{equality_second_variation}
\frac{d^2}{ds^2}|_{s=0} E(\Sigma_0,X(s), \frac{\partial}{\partial t}) = \frac{d^2}{ds^2}|_{s=0} E(\Sigma_0,\breve X(s), \frac{\partial}{\partial t}).
\end{equation}

Let $H_0(\breve X(s))$ and $\alpha_{H_0}(\breve X(s))$ be the the mean curvature vector and the connection 1-form in mean curvature gauge of the images of  $\breve X(s)$. 
\begin{equation}\label{global_isometry_invariant}
\begin{split}
|H_0(X(s))| = &  |H_0(\breve X(s))| \\
\alpha_{H_0} (X(s))=&\alpha_{H_0} (\breve X(s))
\end{split}
\end{equation}
since both are invariant under isometries of the reference spacetime.  By \eqref{match_first}, is easy to see that 
\begin{equation} \label{variation_mean_vanish}  \frac{d}{ds}|_{s=0} | \breve H_0(s)| = 0.   \end{equation}

Moreover, while $\breve \tau(s)$ is different to $\tau(s)$, we have 
\begin{equation}\label{time_function_same} \frac{d}{ds}|_{s=0} \breve \tau(s) = \frac{d}{ds}|_{s=0} \tau(s)  =f \end{equation}
sicne $\tau(0) =0$, $A(0)= Id$ and the static slice is invariant under the action of $A(s)$.  

We apply Theorem  \ref{thm_first_variation_graph}  to each of $X(s)(\Sigma)$ and $\breve X(s)(\Sigma)$ and use \eqref{global_isometry_invariant}, \eqref{variation_mean_vanish} and \eqref{time_function_same} to differentiate
\eqref{first_variation_graph} one more time.  Only the derivative of the term $ \frac{1}{8 \pi} \int_{\Sigma} (\delta \tau)   div ( \Omega^2 \alpha_{H_0} )  d \Sigma $ survives after the evaluation at $s=0$.  We thus  conclude that both sides of \eqref{equality_second_variation} are the same as
\[ 
   - \frac{1}{8 \pi} \int  ( \delta \alpha_{H_0} )(\Omega ^2 \nabla f) d\Sigma . 
\]

It suffices to evaluate the second variation with respect to the family $ \breve X(s)$.  Equivalently, we may assume, for simplicity, that $\delta \widehat X =0$. We follow the computation of  $ \delta \alpha_{H_0}$ from \cite{Chen-Wang-Wang}.

 Let $e_3 =-\frac{H_0}{|  H_0|} $ and $e_4 = \frac{ J_0}{|\breve H_0|}$. From  \eqref{gauge_angle}, \eqref{gauge} and  $\delta \widehat X =0$, we derive
\begin{align}
 \frac{d}{ds}|_{s=0}  H_0&= \frac{div(\Omega^2  \nabla f)}{\Omega^2} \frac{\partial}{\partial t}\\
\label{variation of e_3}  \frac{d}{ds}|_{s=0}  e_3 &= - \frac{div(\Omega^2  \nabla f)}{\Omega^2 |H_0|} \frac{\partial}{\partial t}\\
\label{variation of w_a} \frac{d}{ds}|_{s=0}  X_*(\frac{\partial}{\partial v^a})& = f_a  \frac{\partial}{\partial t}.
\end{align}
As a result, we have 
\begin{align}\label{variation of e_4}
\frac{d}{ds}|_{s=0}  e_{4} = \Omega \nabla f - \frac{div(\Omega^2 \nabla f)}{|H_0| \Omega} e_3
\end{align}
which follows from solving the linear system
\[
\begin{split}
 \frac{d}{ds}|_{s=0} \langle e_{4}, X_*(\frac{\partial}{\partial v^a}) \rangle =&0\\
  \frac{d}{ds}|_{s=0} \langle e_{4},e_3 \rangle=&0
\end{split}
\]
 along with  \eqref{variation of e_3} and  \eqref{variation of w_a}.

We are ready to compute the variation of $\alpha_{H_0}$, which is denoted by $\alpha$ in the remaining part of the proof.
\begin{align*}
(\delta \alpha)_a &= \delta \langle D_a e_3,e_{4} \rangle \\
&= \langle D_{\frac{\partial (\delta X)}{\partial v^a} } e_3,e_{4} \rangle + \langle D_a (\delta e_3),e_{4} \rangle + \langle D_a e_3, \delta e_{4} \rangle.
\end{align*}
By (\ref{variation of e_3}) and  (\ref{variation of e_4}), we get
\begin{align*}
(\delta \alpha)_a &= \langle D_{f_a \frac{\partial}{\partial t}}e_3, e_{4}\rangle + \langle D_a \left( - \frac{div(\Omega^2 \nabla f)}{|H_0 |\Omega} \right) e_4, e_4 \rangle + \langle D_a e_3, \Omega \nabla f - \frac{div(\Omega^2 \nabla f)}{|H_0| \Omega} e_3 \rangle\\
&= \nabla_a \left( \frac{ div ( \Omega^2 \nabla f)}{\Omega |H_0|}\right) + \Omega h_{ab} \nabla^b f  - f_a e_3(\Omega).
\end{align*}
As a result,
\[
\begin{split}
    & - \int  ( \delta \alpha_{H_0} )(\Omega ^2 \nabla f) d\Sigma \\
= & - \int   \Omega^2 f^a [\nabla_a \left( \frac{ div ( \Omega^2 \nabla f)}{\Omega |H_0|}\right) + \Omega h_{ab} \nabla^b f  - f_a e_3(\Omega) ]  d\Sigma\\
= & \int \left \{ \frac{[div(\Omega^2 \nabla f)]^2}{|H_0| \Omega} - \Omega^3 h^{ab} f_af_b + \Omega^2 |\nabla f|^2 e_3 (\Omega) \right \} d\Sigma.
\end{split}
\]
The theorem follows from Lemma \ref{Reilly-Positivity}.
\end{proof}

Finally, we evaluate the second variation of the quasi-local energy and show that for surfaces with spherically symmetric data, namely
\[
\sigma = r^2 \tilde \sigma, \quad |H|=c>0 \textrm{\,\, and \,\, } \alpha_H=0,
\]
there is an isometric embedding into the hyperbolic space $\mathbb H^3$ which minimizes the quasi-local energy with AdS spacetime reference.

\begin{theorem} \label{local_min_AdS}Let $\Sigma$ be a surface in spacetime $N$ with data $(\sigma, |H|, \alpha_H)$. We assume  the mean curvature vector $H$ of $\Sigma$ is spacelike and $\alpha_H =0$. Furthermore, we assume that the image of an isometric embedding of $\sigma$ into $\mathbb H^3$ is convex.
\begin{enumerate}
\item[(i)] Suppose the mean curvature of the isometric embedding into $\mathbb H^3$ satisfies $H_0 \ge |H|$. Then there is an isometric embedding into $\mathbb H^3$ which is a critical point of the quasi-local energy with $AdS$ spacetime reference.
\item[(ii)] Suppose the data on $\Sigma$ is spherically symmetric. Then the above critical point is a  local minimum of the quasi-local energy  with $AdS$ spacetime reference.
\end{enumerate}
\end{theorem}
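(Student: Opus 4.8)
The plan is to reduce Theorem~\ref{local_min_AdS} to the two preceding results: Theorem~\ref{thm_first_variation_graph} for part (i) and Theorem~\ref{minimize_self} (together with Lemma~\ref{Reilly-Positivity}) for part (ii). For part (i), I would look for a critical point among isometric embeddings into the totally geodesic $\mathbb{H}^3$ slice, i.e.\ with time function $\tau\equiv 0$. When $\tau=0$ and $\alpha_H=0$, the static-slice reduction (Remark~\ref{Brown-York_positive}) shows the quasi-local energy becomes $\int \Omega(H_0-|H|)\,d\Sigma$, and the first-variation formula \eqref{first_variation_graph} collapses: the $\delta X^i$-terms all carry a factor of $\nabla\tau$ or $\alpha_{H_0}-\alpha_H$, which vanish, so the only surviving term is $\frac{1}{8\pi}\int_\Sigma (\delta\tau)\,\mathrm{div}\!\left[\Omega^2\nabla\sinh^{-1}\frac{\rho\,\mathrm{div}(\Omega^2\nabla\tau)}{|H_0||H|}-\rho\Omega^4\nabla\tau+\Omega^2(\alpha_{H_0}-\alpha_H)\right]$, which at $\tau=0$, $\alpha_{H_0}=\alpha_H=0$ reduces further to $-\frac{1}{8\pi}\int_\Sigma (\delta\tau)\,\mathrm{div}(\rho_0\,\Omega^4\nabla\tau)\big|_{\tau=0}=0$ since each term is linear in $\tau$ and its derivatives. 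Hence, once we know the static-slice isometric embedding exists and is admissible, it is automatically a critical point; the hypothesis $H_0\ge|H|$ is what guarantees the density $\rho$ (and the energy itself) is nonnegative and that the embedding lies in the regime where the formulas are valid. The uniqueness and existence of the isometric embedding of $(\Sigma,\sigma)$ into $\mathbb{H}^3$ with convex image is supplied by the classical Pogorelov-type theorem already invoked in the proof of Theorem~\ref{minimize_self}.

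For part (ii), the strategy is to show that the second variation is nonnegative by comparing with the computation already done in Theorem~\ref{minimize_self}, then handle the extra cross-terms that appear because here $|H|\neq H_0$ in general (whereas Theorem~\ref{minimize_self} was the ``self'' case $|H_0|=|H|$). Differentiating \eqref{first_variation_graph} once more at the critical point $X(0)$ with $\tau(0)=0$, and using the same device as in Theorem~\ref{minimize_self} — replacing $X(s)$ by $\breve X(s)=A^{-1}(s)X(s)$ so that $\delta\widehat X=0$ — one finds that the second variation splits into (a) the purely reference-geometric piece $-\frac{1}{8\pi}\int(\delta\alpha_{H_0})(\Omega^2\nabla f)\,d\Sigma$, which by the $\delta\alpha_{H_0}$ computation in that proof equals $\frac{1}{8\pi}\int\{\frac{[\mathrm{div}(\Omega^2\nabla f)]^2}{|H_0|\Omega}-\Omega^3 h^{ab}f_af_b+\Omega^2|\nabla f|^2 e_3(\Omega)\}\,d\Sigma$ up to the factor involving $|H_0|$ versus $|H|$, plus (b) terms proportional to $\rho$ and its first variation, which at the critical point are controlled by the gap $H_0-|H|\ge 0$. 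The spherical symmetry hypothesis ($\sigma=r^2\tilde\sigma$, $|H|=c$, $\alpha_H=0$) is exactly what makes these extra terms manageable: on a round sphere the isometric embedding into $\mathbb{H}^3$ is a geodesic sphere, $H_0$ and $\Omega$ restricted to $\Sigma$ are constants, $h_{ab}$ is pure trace, and $e_3(\Omega)$ is constant, so that the quadratic form in $f$ decouples into spherical-harmonic modes and the $\rho$-dependent corrections reduce to a single explicit multiple of $\int|\nabla f|^2$ (or of $\int f^2$ on each eigenspace), which can be checked to have the right sign. I would then invoke Lemma~\ref{Reilly-Positivity} to conclude that the dominant quadratic form $\int\{\frac{[\mathrm{div}(\Omega^2\nabla f)]^2}{H_0\Omega}-\Omega^3 h^{ab}f_af_b+\Omega^2|\nabla f|^2 e_3(\Omega)\}\,d\Sigma$ is nonnegative, and that the perturbation by the nonnegative multiple of $|H|$-terms keeps the total nonnegative.

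The rigidity statement follows the same way: equality in the second variation forces equality in Lemma~\ref{Reilly-Positivity}, hence $f\Omega$ extends to a static potential solving $\bar\nabla^2(\bar f\Omega)+\kappa(\bar f\Omega)g=0$; one must also check that the $\rho$-correction terms, being nonnegative multiples of $\int f^2$ (or $\int|\nabla f|^2$) over nonzero spherical modes, vanish only when $f$ lies in the kernel consistent with that static-potential condition, which for the round sphere is precisely the $\ell=1$ eigenspace. I expect the main obstacle to be step (b): bookkeeping the cross-terms in $\frac{d^2}{ds^2}E$ that involve the first variation of $\rho$ and of the $\sinh^{-1}$ expression, since unlike in Theorem~\ref{minimize_self} these do not cancel and must be estimated using both $H_0\ge|H|$ and the spherical symmetry. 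A secondary technical point is verifying admissibility (spacelike mean curvature vector, validity of the $\sinh^{-1}$ branch) persists along the family $X(s)$, which should be automatic near $s=0$ by continuity but needs to be stated.
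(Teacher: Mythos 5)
Your part (i) contains a genuine error. You claim that at $\tau=0$, $\alpha_H=0$ the $\delta X^i$-terms in \eqref{first_variation_graph} ``all carry a factor of $\nabla\tau$ or $\alpha_{H_0}-\alpha_H$'' and hence vanish, so that any isometric embedding into the static slice is automatically critical. But the bracket multiplying $\delta X^i\bar\nabla_i\Omega$ contains the term $\rho\,\Omega(1+2\Omega^2|\nabla\tau|^2)$, which at $\tau=0$ reduces to $\rho\,\Omega=(H_0-|H|)$, not zero. So the first variation at a static-slice embedding is $\frac{1}{8\pi}\int \delta\Omega\,(H_0-|H|)\,d\Sigma$, which does not vanish for a generic embedding into $\mathbb H^3$. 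Since $\delta\hat\sigma=0$, infinitesimal rigidity forces $\delta\widehat X$ to come from a Killing field of $\mathbb H^3$, so criticality is equivalent to the vanishing of $\int \delta\hat A(\Omega)(H_0-|H|)\,d\Sigma$ for all one-parameter families of isometries $\hat A(s)$; equivalently, one must choose the particular embedding (equivalently, the base point of the static potential) at which the function $\int\Omega(H_0-|H|)\,d\Sigma$ on $\mathbb H^3$ is critical. This is exactly where the hypothesis $H_0\ge|H|$ enters in the paper: it makes this function positive, proper and convex (static potentials blow up at infinity of $\mathbb H^3$), so a minimizer exists and is unique. Your reading of $H_0\ge|H|$ as merely ensuring $\rho\ge 0$ and ``validity of the formulas'' misses the actual content of part (i), and your conclusion that the critical point is ``automatic'' is false.

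For part (ii) your outline is closer in spirit to the paper (pass to $\breve X(s)=A^{-1}(s)X(s)$, handle $-\int(\delta\alpha_{H_0})(\Omega^2\nabla f)$ via Lemma \ref{Reilly-Positivity}, use $H_0\ge|H|$ for the manifestly nonnegative gap terms $\int\Omega^3(H_0-|H|)|\nabla\delta\tau|^2$ and $\int\frac{H_0-|H|}{\Omega H_0|H|}[\mathrm{div}(\Omega^2\nabla\delta\tau)]^2$), but it leaves unresolved precisely the terms that are second order in the displacement: the term $\frac{1}{8\pi}\int\delta^2\hat A(\Omega)(H_0-|H|)\,d\Sigma$, whose nonnegativity comes from the \emph{minimizing} property of the critical point selected in (i) (so your flawed (i) undercuts this), and the term $\int(\delta^2\breve X^i\,\bar\nabla_i\Omega)(H_0-|H|)\,d\Sigma$, which the paper controls via the second-order isometric embedding constraint $2\beta h_{ab}+\nabla_a\alpha_b+\nabla_b\alpha_a=2\Omega^2\,\delta\tau_a\delta\tau_b$: tracing and integrating gives $\int\beta H_0\,d\Sigma\ge0$, and spherical symmetry (constant $H_0$, $|H|$, $\Omega$ and $\nu(\Omega)$ on the image) turns this into $\int\beta\,\nu(\Omega)(H_0-|H|)\,d\Sigma\ge0$. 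Your ``cross-terms to be estimated'' gestures at these but supplies no argument, and your proposed spherical-harmonic decomposition is not what is needed; the decisive inputs are the second-order rigid-motion comparison and the second-order isometric embedding equation. (Also note the theorem asserts a local minimum only; the rigidity discussion you append is not required.)
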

\begin{proof}
For an isometric embedding into $\mathbb H^3$ as a static slice of AdS, $\tau=0$ and the quasi-local energy \eqref{energy_fix_chart_base} is simply
\begin{equation} \label{choice_static_potential}\frac{1}{8 \pi}\int \Omega(H_0 - |H|) d\Sigma. \end{equation}
The isometric embedding into $\mathbb H^3$ is unique up to an isometry of $\mathbb H^3$. If $H_0 =|H|$ everywhere, then the integral vanishes for any choice of static potential. Otherwise, the integral depends on the isometry which may pick up a different choice of the static potentials $\Omega$. The choice of the static potential corresponds to choosing a base point on $\mathbb H^3$. Hence, the integral $\int \Omega(H_0 - |H|) d\Sigma$ becomes a function on $\mathbb H^3$. Assuming $H_0 \ge |H|$ and they are not equal everywhere, this function is positive, proper and convex since the static potentials approach infinity at the infinity of  $\mathbb H^3$. Hence, there is a unique choice of $\Omega$ that minimizes the quasi-local energy among all the static potentials. Equivalently, if we pick a fixed  static potential, then there is a unique isometric embedding $X_0$ such that for any other isometric embedding $X$ into $\mathbb H^3$, we have
\[ E(\Sigma,X, \frac{\partial}{\partial t}) \ge E(\Sigma,X_0, \frac{\partial}{\partial t}).  \]
In particular, for any family, $\hat A(s)$, of isometries of $\mathbb H^3$ with $\hat A(0)=Id$, we have
\begin{equation} \label{minimum_among_iso}
\begin{split}
\int \delta \hat  A (\Omega ) (H_0 - |H|)  d\Sigma = & 0\\
\int \delta^2 \hat  A ( \Omega )(H_0 - |H|)  d\Sigma \ge & 0\\
\end{split}
\end{equation}
where $ \delta \hat  A = \frac{d}{ds}|_{s=0}\hat  A (s)$ and $ \delta^2 \hat  A= \frac{d^2}{ds^2}|_{s=0} \hat A(s) $ are vector fields on $\mathbb H^3$.

Consider a family of isometric embeddings $X(s)=(\tau(s),X^i(s))$ of $\Sigma$ into $AdS$ where $X(0)$ is the above isometric embedding into $\mathbb H^3$. Let $H_0(s)$ and $ \alpha_{H_0}(s)$ be the mean curvature vector and connection 1-form in mean curvature gauge of the image of $X(s)$. For simplicity, let  $\delta \tau =  \frac{d}{ds}|_{s=0} X^0(s)$, $\delta |H_0| = \frac{d}{ds}|_{s=0} |H_0(s)|$ and 
$\delta \alpha_{H_0} =\frac{d}{ds}|_{s=0} \alpha_{H_0(s)}$. Similarly,  set
$\delta \Omega = \frac{d}{ds}|_{s=0} \Omega(s)$ and 
$\delta^2 \Omega = \frac{d^2}{ds^2}|_{s=0} \Omega(s)$.

Let $\widehat X(s)=(0,X^i(s))$ be the projection of $X(s)(\Sigma)$ onto the static slice. $\widehat X(s)$ is an isometric embedding of the metric 
\[  \hat \sigma(s)_{ab}= \sigma_{ab} + \Omega^2(s) \tau_a(s) \tau_b(s) \]
into $\mathbb H^3$. Set $ \delta \hat \sigma = \frac{d}{ds}|_{s=0} \hat \sigma(s)  $ and $ \delta^2 \hat \sigma = \frac{d^2}{ds^2}|_{s=0} \hat \sigma(s).  $
We have
\begin{equation} \label{variation_projection_metric}
\begin{split}
\delta \hat \sigma =& 0\\
\delta^2 \hat \sigma_{ab} =& \Omega^2(0)\delta \tau_a \delta \tau_b.
\end{split}
\end{equation}
The first variation of the quasi-local energy is
\[ \frac{d}{ds}|_{s=0} E(\Sigma_0,X(s), \frac{\partial}{\partial t}) =\frac{1}{8 \pi} \int ( \delta \Omega) (H_0 - |H|) +   \Omega (\delta |H_0|)   d\Sigma.  \]
From \eqref{variation_projection_metric} and the infinitesimal rigidity of isometric embeddings into  $\mathbb H^3$, there is a family $\hat  A(s)$ of isometries of $\mathbb H^3$ with $\hat  A(0)=Id$  such that 
\begin{equation}\label{first_order_matching}
   \delta \hat A =  \delta  \widehat X.\end{equation}
We conclude that the first term vanishes from \eqref{minimum_among_iso}.  We conclude that $\delta |H_0|=0$ as in the proof of Theorem \ref{minimize_self}. This proves part (i).

For part (ii), it is easy to see that for surfaces with spherical symmetric data, the mean curvature of the image of isometric embeddings into $\mathbb H^3$ is constant and the critical point obtained in part (i) is precisely such that $\Omega$ is constant on the image.

As in the proof of Theorem \ref{minimize_self}, we consider the family $A(s)$ of isometries of the AdS spacetime whose restriction to the static slice is  the family $\hat A (s)$ and consider the following family of isometric embeddings of $\sigma$ into the AdS spacetime:
\[  \breve X(s) = A^{-1}(s) X(s). \]

As in the proof of Theorem \ref{minimize_self}, we apply Theorem  \ref{thm_first_variation_graph}  to each of $X(s)(\Sigma)$ and $\breve X(s)(\Sigma)$ and use \eqref{global_isometry_invariant}, \eqref{variation_mean_vanish} and \eqref{time_function_same} to differentiate \eqref{first_variation_graph} one more time.  In this case, $H_0\not= |H|$ at $s=0$, and we derive that
\begin{equation}
   \frac{d^2}{ds^2}|_{s=0} E(\Sigma_0,  X(s), \frac{\partial}{\partial t})
=  \frac{d^2}{ds^2}|_{s=0} E(\Sigma_0, \breve X(s), \frac{\partial}{\partial t}) + \frac{1}{8 \pi} \int  \delta^2 \hat A ( \Omega ) (H_0 - |H|) d\Sigma 
\end{equation}
and
\begin{equation} \label{second_variation_breve}
\begin{split}
    & \frac{d^2}{ds^2}|_{s=0} E(\Sigma_0, \breve X(s), \frac{\partial}{\partial t})\\
=&\frac{1}{8 \pi} \int ( \delta^2 \breve X^i \bar \nabla_i \Omega) (H_0 - |H|) d\Sigma+\frac{1}{8 \pi} \int  \Omega^3 (H_0 -|H|) |\nabla \delta \tau|^2 d\Sigma \\
&+ \frac{1}{8 \pi}\int  (\frac{H_0 -|H|}{\Omega H_0 |H|}) [div(\Omega^2 \nabla \delta \tau)]^2 d\Sigma -\frac{1}{8 \pi} \int  ( \delta \alpha_{H_0} )(\Omega ^2 \nabla \delta \tau) d\Sigma . 
\end{split}
\end{equation}
From \eqref{minimum_among_iso}, we conclude
\[  \int \delta^2 \hat A(\Omega)(H_0-|H|) d \Sigma \ge 0.\]

The second and third term on the right hand side of \eqref{second_variation_breve}  are manifestly non-negative since $H_0 \ge |H|$. The last term is non-negative as in the proof of Theorem \ref{minimize_self}.
It suffices to show that the first term is also non-negative. We decompose $\delta^2\breve X^i$ into its tangential and normal parts to $X(0)(\Sigma)$. Let
\[ \delta^2 \breve X^i = \alpha^a \frac{\partial \widehat X^i(0)}{\partial v^a} + \beta \nu^i.  \] Since $\Omega$ is a constant on the image of $\widehat{X}$, integrating over $\Sigma$ gives
\[
 \int ( \delta^2 \breve X^i \bar \nabla_i \Omega) (H_0 - |H|) d\Sigma = \int  \beta \nu(\Omega) (H_0-|H|) d \Sigma.
\]

In terms of $\alpha$ and $\beta$, the second variation of the isometric embedding equation is
\begin{equation}
\label{gauge_first_variation_metric}2 \beta  h_{ab} +   \nabla_a \alpha_b +   \nabla_b \alpha_a =2 \Omega^2 \delta \tau_a\delta \tau_b.
\end{equation}

Taking the trace  of \eqref{gauge_first_variation_metric} and integrating, we conclude that 
\[  \int \beta H_0 d\Sigma \ge 0. \] In particular, $\int \beta d\Sigma\ge 0$ since $H_0$ is a constant. It  follows that 
\[  \int  \beta \nu(\Omega) (H_0-|H|) d \Sigma \ge 0 \]
since  $\nu(\Omega) $, $H_0$ and $|H|$ are all positive constants.
\end{proof}

\section{Quasi-local/total  conserved quantities}
The reference spacetime admits $10$ dimensional Killing fields. In addition to the quasi-local energy corresponding to observers, a quasi-local conserved quantity corresponding to each Killing field is defined. We follow the approach in \cite{Chen-Wang-Yau3} to use an isometric embedding to transplant Killing fields of the reference spacetime back to the 2-surface of interest in a physical spacetime. The quasi-local energy can be written in terms of the quasi-local energy density $\rho$ in \eqref{rho} and  the quasi-local momentum density $j$, see \eqref{j_momentum} below.
In the second subsection,  we evaluate the limits of the quasi-local conserved quantities on an  asymptotically AdS initial data set and prove that the limits agree with the total conserved quantities of such an initial data. In the third subsection, we show that the limit of the quasi-local energy is the linear function dual to the total conserved quantities and in the last subsection, we compute the evolution of the total conserved quantities under the Einstein equation.

\subsection{Quasi-local conserved quantities}

We rewrite the quasi-local energy in terms of $\rho$ \eqref{rho} using the expression \eqref{energy_fix_chart_graph}. This is a straightforward computation involving only basic identities of the inverse hyperbolic functions. For the case $\Omega=1$, this is carried out in details in Section 4 of \cite{Chen-Wang-Yau1} for the Wang-Yau quasi-local energy.  After some simplifications, the quasi-local energy in terms of $\rho$ is
\[
\begin{split}
  &E(\Sigma, X, T_0)\\
= & \frac{1}{8\pi}\int_\Sigma \left[\rho(\Omega^2+\Omega^4|\nabla\tau|^2)+ div(\Omega^2 \nabla \tau) \sinh^{-1}(\frac{\rho div(\Omega^2 \nabla \tau) }{|H_0| |H|})-\alpha_{H_0}(\Omega^2 \nabla \tau)+\alpha_H(\Omega^2 \nabla \tau)\right] d\Sigma \end{split}
\]
Let $j$ be the quasi-local momentum density one-form:
\begin{equation}\label{j_momentum}  j = \rho \Omega^2 d \tau - d[ \sinh^{-1} (\frac{\rho div  (\Omega^2 \nabla \tau)}{|H_0||H|})]-\alpha_{H_0}  + \alpha_{H}. \end{equation}

We are ready to define the quasi-local conserved quantity with respect to a pair $(X, T_0)$ and a Killing field $K$.
\begin{definition} \label{ql_conserved}The quasi-local conserved quantity of $\Sigma$ with respect to a pair $(X, T_0)$ and a Killing field $K$ in the reference spacetime is 
\begin{equation}\label{qlcq2}E(\Sigma, X, T_0, K)=-\frac{1}{8\pi} \int_\Sigma
\left[ \langle K, T_0\rangle \rho+j(K^\top) \right]d\Sigma\end{equation}  
where $K^\top$ is the tangential part of $K$ to $X(\Sigma)$, and $\rho$ defined in \eqref{rho} and $j$ defined in \eqref{j_momentum}.
\end{definition}
In particular, when $K=T_0$,  $E(\Sigma, X, T_0,T_0)$  recovers the quasi-local energy $E(\Sigma,X,T_0)$ since the tangential part of $T_0$ to $X(\Sigma)$ is $-\Omega^2 \nabla \tau$ and $\langle T_0, T_0\rangle = - \Omega^2$.

\subsection{Total conserved quantities for an asymptotically AdS spacetime}
In this subsection, we evaluate the large sphere limit of the quasi-local conserved quantities for asymptotically AdS initial data sets and show that their limits recover the total conserved quantities for asymptotically AdS initial data sets considered by previous authors. See for example \cite{Abbott-Deser,  Ashtekar-Magnon,CCS, Chrusciel-Herzlich,Chruscie-Nagy, Chrusciel-Maerten-Tod, Gibbons-Hull-Warner, Henneaux-T, Maerten, Wang-Xie-Zhang, Xie-Zhang}.

We first review the AdS spacetime and its Killing fields. Take $\mathbb{R}^{3,2}$ with the coordinate system $(y^0, y^1, y^2, y^3, y^4)$ and the metric \[-(dy^4)^2+\sum_{i=1}^3(dy^i)^2-(dy^0)^2.\]
 $ AdS$ can be identified with the timelike hypersurface
given by \[-(y^4)^2+\sum_{i=1}^3(y^i)^2-(y^0)^2=-1.\] Note that the group $SO(3,2)$ leaves this hypersurface invariant and thus the isometry group of $AdS$ is $SO(3,2)$, which is $10$ dimensional. 

The static chart of  $AdS$ comes from the following parametrization:
\begin{align*}
y^0&=\sqrt{1+r^2}\sin t\\
y^i&=r \tilde x^i\\
y^4&=\sqrt{1+r^2}\cos t.
\end{align*}
We have the following basis for the Killing vector fields: the time translating Killing field
$\frac{\partial}{\partial t}=y^4\frac{\partial}{\partial y^0}- y^0\frac{\partial}{\partial y^4}$, the first set of boost fields \begin{equation}\label{boost_1}\mathfrak{p}^i=y^i\frac{\partial}{\partial y^0}+y^0\frac{\partial}{\partial y^i},\end{equation} the second set of boost fields \begin{equation}\label{boost_2}\mathfrak{c}^i= y^i\frac{\partial}{\partial y^4}+y^4\frac{\partial}{\partial y^i},\end{equation} and the rotation Killing fields $\mathfrak{j}^{k}=\epsilon_{ijk}y^i\frac{\partial}{\partial y^j}$. 

In the static chart of AdS, the metric is of the form
\[  -(1+r^2) dt^2 + \frac{dr^2}{1+r^2} + r^2 (d\theta^2+\sin^2\theta d\phi^2).  \]
The static slice $t=0$ is totally geodesics and the induced metric is the hyperbolic metric. We consider  asymptotically AdS initial data sets as follows:
\begin{definition}\label{a_h_coordinates}
An initial data $(M,g,k)$ is said to be asymptotically AdS if 
there exists a compact subset $K$ of $M$ such that $M\backslash K$ is diffeomorphic to a finite union  of ends $\cup ( \mathbb{H}^3\backslash B_\alpha )$ where each $B_\alpha$ is a geodesic ball in $\mathbb{H}^3$. On each end, under the diffeomorphism, the metric $g$ takes the form: 
\[g= g_{rr} dr^2+ 2 g_{ra}dr du^a + g_{ab} du^adu^b,\]
where
\[\begin{split}
g_{rr} = \frac{1}{r^2} - \frac{1}{r^4}+ \frac{g_{rr}^{(-5)}}{r^5}+O(r^{-6}), \quad  g_{ra}= O(r^{-3}),  \quad
g_{ab} = r^2 \tilde \sigma_{ab}+ \frac{g_{ab}^{(-1)}}{r}+O(r^{-2}),
\end{split}\]
and
\[
k_{rr} = O(r^{-5}) \quad  k_{ra}=k^{(-3)}_{ra} +O(r^{-4}),  \quad k_{ab} =   k^{(-1)}_{ab}  +O(r^{-2}).
\]
\end{definition}
Let $\Sigma_r$ be the coordinate spheres on an end of an asymptotically AdS initial data set. In the following theorem, we evaluate the limit of the quasi-local conserved quantities in terms of the expansion of $g$ and $k$.
\begin{theorem} \label{thm_new}
Let $(M,g,k)$ be an asymptotically AdS initial data set as in Definition \ref{a_h_coordinates} and  $\Sigma_r$ be the coordinate spheres on an end. Let $X_r$ be the isometric embedding of $\Sigma_r$ into the static slice $t=0$ of the AdS spacetime such that $y^i (X_r) = r \tilde x^i +O(1)$ and $\Omega (X_r)= r+O(1)$. We have
\begin{align}
\label{total_energy_p} \lim_{r\to \infty} E(\Sigma_r,X_r,\frac{\partial}{\partial t},\frac{\partial}{\partial t}) =& \frac{1}{8 \pi} \int \left[ g^{(-5)}_{rr} + \frac{3}{2} tr_{S^2} g^{(-1)}_{ab}\right] d S^2 \\ 
\label{total_momentum_p}\lim_{r\to \infty} E(\Sigma_r,X_r,\frac{\partial}{\partial t},\mathfrak{p}^i) =&   \frac{1}{8 \pi} \int  \tilde x^i \left[g^{(-5)}_{rr} + \frac{3}{2} tr_{S^2} g^{(-1)}_{ab}\right]  d S^2\\
\label{total_center_mass_p}  \lim_{r\to \infty} E(\Sigma_r,X_r,\frac{\partial}{\partial t},\mathfrak{c}^i) =&  -\frac{1}{8 \pi} \int    \tilde x^i \tilde \nabla^a  k^{(-3)}_{ra} d S^2\\
\label{total_angular_momentum_p} \lim_{r\to \infty} E(\Sigma_r,X_r,\frac{\partial}{\partial t},\mathfrak{j}^i) =& \frac{1}{8 \pi}  \int   \tilde x^i \Big (\tilde{\epsilon}^{ab}\tilde{\nabla}_b k^{(-3)}_{ra} \Big) d S^2.
\end{align}
\end{theorem}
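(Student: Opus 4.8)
# Proof Proposal for Theorem 7.5

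The plan is to carry out an asymptotic expansion of each ingredient in the quasi-local conserved quantity $E(\Sigma_r, X_r, \frac{\partial}{\partial t}, K)$ as $r\to\infty$, following the template already established in \cite{Chen-Wang-Yau3} for the Minkowski case but adapted to the hyperbolic/AdS setting. Since the isometric embeddings $X_r$ are taken into the totally geodesic static slice $t=0$, we have $\tau=0$ on the reference side, so $X_r$ is an isometric embedding of $\Sigma_r$ into $\mathbb{H}^3$ and the reference data simplifies considerably: $|H_0|$ is the mean curvature of $\Sigma_r$ in $\mathbb{H}^3$, $\alpha_{H_0}=0$ modulo the connection one-form arising from the normal structure, and $\Omega(X_r)=r+O(1)$. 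The first step is therefore to expand, to the relevant order in $1/r$, the geometric data $(\sigma_r, |H|, \alpha_H)$ of the physical coordinate spheres using the asymptotic expansion of $g$ and $k$ in Definition \ref{a_h_coordinates}, and separately to expand the reference data $(|H_0|, \Omega)$ from the isometric embedding into $\mathbb{H}^3$ of the same induced metric. The induced metric on $\Sigma_r$ is $r^2\tilde\sigma_{ab} + \frac{1}{r}g^{(-1)}_{ab} + O(r^{-2})$, and the isometric embedding into $\mathbb{H}^3$ of this perturbed round metric is controlled, to leading orders, by the linearized isometric embedding equation; this produces the $\frac{3}{2}\,\mathrm{tr}_{S^2} g^{(-1)}_{ab}$ contributions.

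The second step is to compute the quasi-local energy density $\rho$ from \eqref{rho} and the momentum density $j$ from \eqref{j_momentum} to the appropriate orders. With $\tau=0$ the formula for $\rho$ collapses to $\rho = \frac{|H_0| - |H|}{\Omega}$, so one must expand $|H_0|-|H|$ to order $r^{-4}$ (since $\Omega\sim r$ and the area form on $\Sigma_r$ contributes $r^2$, we need the integrand $\langle K, T_0\rangle\rho$ to behave like $r^{-2}$ against $dS^2$, i.e. $\rho\sim r^{-4}$ after accounting for $\langle K,T_0\rangle\sim r$ or $O(1)$ depending on $K$). Here the key computation is that $|H|$ — the norm of the mean curvature vector in the physical spacetime — picks up contributions from both $g_{rr}$ (through $g^{(-5)}_{rr}$) and from $k_{ab}$, while $|H_0|$, being purely Riemannian, sees only the metric expansion; the difference isolates precisely $g^{(-5)}_{rr} + \frac{3}{2}\mathrm{tr}_{S^2}g^{(-1)}_{ab}$ for the energy. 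For the momentum density $j$, with $\tau=0$ the first two terms of \eqref{j_momentum} vanish and $j = \alpha_H - \alpha_{H_0}$, so one must expand the connection one-form in the mean curvature gauge; the $k^{(-3)}_{ra}$ terms enter through $\alpha_H$ and produce the $\mathfrak{c}^i$ and $\mathfrak{j}^i$ limits after pairing with $K^\top$.

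The third step is to substitute the Killing fields — $\frac{\partial}{\partial t}$, the boosts $\mathfrak{p}^i$, $\mathfrak{c}^i$, and the rotations $\mathfrak{j}^i$ — evaluated along $X_r(\Sigma_r)$, compute $\langle K, T_0\rangle$ and the tangential projection $K^\top$ to leading order, and integrate. For $K=\frac{\partial}{\partial t}$ and $K=\mathfrak{p}^i$ the relevant quantity is $\langle K, \frac{\partial}{\partial t}\rangle$: since $\frac{\partial}{\partial t} = y^4\partial_{y^0} - y^0\partial_{y^4}$ and, on the static slice, $\mathfrak{p}^i = y^i\partial_{y^0}$ restricted appropriately, one gets $\langle \mathfrak{p}^i, \frac{\partial}{\partial t}\rangle \sim r\,\tilde x^i$ to leading order, which combined with $\rho\sim r^{-4}$ and $d\Sigma\sim r^2\,dS^2$ gives the stated integrals; moreover the tangential term $j(K^\top)$ is lower order for these two Killing fields. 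For $\mathfrak{c}^i$ and $\mathfrak{j}^i$, the pairing $\langle K,\frac{\partial}{\partial t}\rangle$ is subleading and the dominant contribution comes from $j(K^\top)$, which is why these limits are expressed through $k^{(-3)}_{ra}$ via the one-form $\alpha_H$. One finally uses integration by parts on $S^2$ to rewrite $\int \tilde x^i \tilde\nabla^a(\text{something})$ in the displayed forms and to see, e.g., that the divergence and curl pieces of $k^{(-3)}_{ra}$ separate into the $\mathfrak{c}^i$ and $\mathfrak{j}^i$ quantities respectively.

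The main obstacle I anticipate is the careful bookkeeping of the isometric embedding of $\Sigma_r$ into $\mathbb{H}^3$: one needs to know $|H_0|$ and the induced second-fundamental-form data to order $r^{-4}$, which requires solving the linearized (and second-order) isometric embedding equations on the round sphere and controlling the error terms uniformly in $r$. The subtlety specific to the AdS case — absent in \cite{Chen-Wang-Yau3} — is that the static potential $\Omega$ is not constant but grows like $r$, so its expansion $\Omega = r + O(1)$ must be tracked through every product, and the $O(1)$ correction to $\Omega$ could in principle contribute; showing it does not (or contributes only to lower order, or is absorbed) is the delicate point. A secondary technical point is verifying that the terms involving $\sinh^{-1}$ in both $\rho$'s denominator structure and in $j$ genuinely vanish or are negligible when $\tau=0$ and $|H_0|, |H|$ are both $\sim 2r$ — this follows from the explicit form of \eqref{rho} and \eqref{j_momentum} but should be stated cleanly. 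Once these expansions are in hand, matching against the known definitions of total energy, momentum, center of mass, and angular momentum for asymptotically AdS data (as in the cited references) is a direct comparison.
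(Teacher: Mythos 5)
You follow essentially the same route as the paper: since $X_r$ maps into the totally geodesic slice $t=0$, one has $\tau=0$ and $\alpha_{H_0}=0$, so $\rho=(H_0-|H|)/\Omega$ and $j=\alpha_H$; the fields $\frac{\partial}{\partial t}$ and $\mathfrak{p}^i$ are normal to the slice $\{y^0=0\}$, so for them $K^\top=0$ exactly and only the $\rho$-term in \eqref{qlcq2} survives (it drops identically, not merely to lower order), while $\mathfrak{c}^i$ and $\mathfrak{j}^i$ are tangent to the slice, so only $j(K^\top)$ survives; everything then reduces to expanding $|H|$, $\alpha_H$, $H_0$, $\Omega$ and the Killing fields and integrating by parts on $S^2$. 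This is exactly the paper's proof scheme.

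However, your central expansion step is misattributed in two ways that, as written, would not produce the stated formulas. First, $k$ does not enter $|H|$ at the relevant order: under Definition \ref{a_h_coordinates} one has $\langle H,e_4\rangle=O(r^{-3})$, hence $|H|=h+O(r^{-6})$ where $h$ is the mean curvature of $\Sigma_r$ in $(M,g)$; this is precisely why, in contrast with the asymptotically hyperbolic case of \cite{Chen-Wang-Yau4}, no $k$-terms appear in \eqref{total_energy_p}--\eqref{total_momentum_p}, whereas your plan posits contributions from $k_{ab}$ to $|H|$ which would then have to cancel. Second, the $\frac{3}{2} tr_{S^2} g^{(-1)}_{ab}$ term does not come from the reference embedding: the linearized isometric embedding equation into $\mathbb{H}^3$ (equivalently the Gauss equation, since the perturbation $g^{(-1)}_{ab}/r$ changes the Gauss curvature only at order $r^{-5}$) gives $H_0=2+r^{-2}+O(r^{-4})$, i.e.\ $g^{(-1)}_{ab}$ is invisible in $H_0$ at order $r^{-3}$; the combination $g^{(-5)}_{rr}+\frac{3}{2} tr_{S^2} g^{(-1)}_{ab}$ arises entirely from the physical mean curvature, through $\partial_r\ln\sqrt{\det \sigma_{ab}}$ and $g_{rr}$, and thus uses the radial dependence of $g_{ab}$, which no isometric embedding of the single surface can detect. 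The identity $H_0=2+r^{-2}+O(r^{-4})$ is the only genuinely embedding-theoretic input (no second-order embedding analysis is needed), and without it --- or with the attribution reversed as in your write-up --- the evaluation of $\Omega(H_0-|H|)$ does not close. Your remaining points ($\Omega=r+O(1)$ is harmless because $H_0-|H|=O(r^{-3})$ and $d\Sigma_r\sim r^2\,dS^2$; $(\alpha_H)_a=-k^{(-3)}_{ra}/r^2+O(r^{-3})$ paired with $K^\top$ and the divergence/curl split on $S^2$ for $\mathfrak{c}^i$ versus $\mathfrak{j}^i$) agree with the paper.
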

\begin{proof}
First, we compute the expansion of $(\sigma,|H|,\alpha_H)$ on $\Sigma_r$  in the following lemma.
\begin{lemma}
On $\Sigma_r$, we have the following expansions:
\begin{align}
\label{expansion_metric}\sigma_{ab} =& r^2 \tilde \sigma_{ab}+ \frac{g_{ab}^{(-1)}}{r}+O(r^{-2})\\
\label{expansion_mean_curvature_physical}|H| = & 2+ \frac{1}{r^2} - \frac{ g^{(-5)}_{rr} + \frac{3}{2} tr_{S^2} g^{(-1)}_{ab} }{r^3} + O(r^{-4})\\
\label{expansion_torsion_physical} (\alpha_H)_a=& -\frac{k^{(-3)}_{ra}}{r^2} +O(r^{-3}).
\end{align}
\end{lemma}
\begin{proof}
The computation of $|H|$ is the same as in Lemma 3.1 of \cite{Chen-Wang-Yau4} for an asymptotically hyperbolic initial data set. The only difference is that, in the asymptotically AdS case, $\langle H, e_4 \rangle = O(r^{-3})$ and does not contribute. For $\alpha_H$, we recall from \cite{Wang-Yau3} that
\[  ( \alpha_{H})_a =- k(e_3, \partial_a) + \nabla_a \theta \]
where 
\[  \sinh (\theta) = \frac{- \langle H, e_4 \rangle }{|H|}. \]
The formula follows since the leading term of $e_3$ is $r \frac{\partial}{\partial r}$.
\end{proof}
We are now ready to evaluate the limits of the quasi-local conserved quantities. The static slice $t=0$ corresponds to the hypersurface $y^0=0$. We  have $y^4 = \Omega$, 
\[
\begin{split}
\rho= \frac{H_0-|H|}{\Omega}   \qquad  \textrm{and}  \qquad j = \alpha_H.
\end{split}
\]
For the conserved quantities corresponding to $\frac{\partial}{\partial t}$ and $\mathfrak{p}^i$, we observe that they are normal to the hypersurface and the term in \eqref{qlcq2} that involves the quasi-local momentum density $j$ vanishes.  As a result,
\[    
\begin{split}
\lim_{r\to \infty} E(\Sigma_r,X_r,\frac{\partial}{\partial t},\frac{\partial}{\partial t}) =& \frac{1}{8 \pi}\lim_{r \to \infty} \int \Omega(H_0 -|H|)  d \Sigma_r\\
\lim_{r\to \infty} E(\Sigma_r,X_r,\frac{\partial}{\partial t},\mathfrak{p}^i) =& \frac{1}{8 \pi}\lim_{r \to \infty} \int  y^i (H_0 -|H|)  d \Sigma_r\\
\end{split}
\]
since 
\[
\begin{split}
\langle \frac{\partial}{\partial t}, \frac{\partial}{\partial t} \rangle=  -\Omega^2 \qquad  \textrm{and} \qquad \langle \frac{\partial}{\partial t},  \mathfrak{p}^i \rangle=  \Omega y^i.
\end{split}
\]
From \eqref{expansion_metric} for $\sigma_{ab}$ and the linearized isometric embedding equation, we conclude that
\[  H_0 = 2 + \frac{1}{r^2} + O(r^{-4}). \] \eqref{total_energy_p} and \eqref{total_momentum_p} follow from \eqref{expansion_mean_curvature_physical}.

On the other hand, $\mathfrak{c}^i$ and $\mathfrak{j}^{i}$ are normal to $\frac{\partial}{\partial t}$. As a result, for the conserved quantities corresponding to these vector fields, the term in \eqref{qlcq2} that involves the quasi-local energy density $\rho$ vanishes. Hence, 
\[    
\begin{split}
\lim_{r\to \infty} E(\Sigma_r,X_r,\frac{\partial}{\partial t},\mathfrak{c}^i) =&  -\frac{1}{8 \pi}\lim_{r \to \infty} \int  \alpha_H(  \Omega \nabla y^i - y^i \nabla \Omega)  d \Sigma_r\\
\lim_{r\to \infty} E(\Sigma_r,X_r,\frac{\partial}{\partial t},\mathfrak{j}^i) =& -\frac{1}{8 \pi} \lim_{r \to \infty} \int  \alpha_H( \Omega \epsilon_{kji}y^k \nabla y^j) d \Sigma_r.
\end{split}
\]
\eqref{total_center_mass_p} and \eqref{total_angular_momentum_p} follow from \eqref{expansion_torsion_physical},
\[
\begin{split}
(\frac{\partial}{\partial y^i})^\top =\nabla y^i \qquad \textrm{and} \qquad (\frac{\partial}{\partial y^4})^\top =- \nabla \Omega.
\end{split}
\]
\end{proof}
This leads to the following definition for the total conserved quantities for an asymptotically AdS initial data set.
\begin{definition}\label{conserved}
For an asymptotically AdS initial data set in the sense of Definition \ref{a_h_coordinates}, the 10 total conserved quantities
$E$, $P^i$, $C^i$ and $J^i$ corresponding to $\frac{\partial}{\partial t}$, $\mathfrak{p}^i$, $\mathfrak{c}^i$ and $
\mathfrak{j}^i$, respectively, are defined to be
\begin{align}
\label{total_energy} E =&  \frac{1}{8 \pi} \int \left[ g^{(-5)}_{rr} + \frac{3}{2} tr_{S^2} g^{(-1)}_{ab}\right] d S^2\\
\label{total_momentum}P^i =&  \frac{1}{8 \pi} \int  \tilde x^i (g^{(-5)}_{rr} + \frac{3}{2} tr_{S^2} g^{(-1)}_{ab}) d S^2\\
\label{total_center_mass} C^i=& \frac{1}{8 \pi} \int    \tilde x^i \tilde \nabla^a  k^{(-3)}_{ra} d S^2\\
\label{total_angular_momentum}J^i =& \frac{1}{8 \pi}  \int   \tilde x^i \Big (\tilde{\epsilon}^{ab}\tilde{\nabla}_b k^{(-3)}_{ra} \Big) d S^2.
\end{align}
\end{definition}
Total conserved quantities (or global charges) of asymptotically AdS initial data sets have been studied extensively using the Hamiltonian of asymptotically Killing fields \cite{Abbott-Deser,  Ashtekar-Magnon, Chrusciel-Herzlich,Chruscie-Nagy, Chrusciel-Maerten-Tod, Gibbons-Hull-Warner, Henneaux-T, Maerten, Wang-Xie-Zhang, Xie-Zhang}. We review the construction below and prove that they are the same as the total conserved quantities in Definition \ref{conserved} under the asymptotic assumptions in Definition \ref{a_h_coordinates}.

Let $K$ be a Killing field of  the AdS spacetime.  Let $V$ and $Y$  be the normal component and the  tangential component of $K$ to the static slice of the  AdS spacetime, respectively.  The Hamiltonian $H(V,Y)$ corresponding to $K$ is 
\begin{equation} \label{total_hamiltonian}
H(V,Y) = \lim_{r \to \infty} \frac{1}{8 \pi} \int_{\Sigma_r} [\mathbb U_i(V) +  \mathbb V_i(Y)]  \nu^i d\Sigma_r
\end{equation}
where
\[ 
\begin{split}
\mathbb U_i(V)  = & V g^{jl}\partial_j g_{i l} + D^ j V (g_{ij} - g^{\mathbb H^3}_{ij}), \\
 \mathbb V_i(Y) =& (k_{ij} - tr k \,\,g_{ij})Y^j. 
\end{split}
\]
\begin{proposition}
For asymptotically AdS initial data sets with expansion given in Definition \ref{a_h_coordinates}, the total conserved quantities in Definition \ref{conserved} agree with the total conserved quantities (global charges) in \eqref{total_hamiltonian}.
\end{proposition}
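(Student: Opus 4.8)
The plan is to evaluate the boundary integral \eqref{total_hamiltonian} for each of the ten basis Killing fields using only the asymptotic expansions of Definition~\ref{a_h_coordinates}, and to identify the resulting limits with the integrals \eqref{total_energy}--\eqref{total_angular_momentum}. The first step is to decompose each Killing field $K$ into its part $V$ normal to the static slice $\{y^0=0\}\cong\mathbb{H}^3$ and its part $Y$ tangent to it. From the explicit realization of $SO(3,2)$, the fields $\frac{\partial}{\partial t}$ and $\mathfrak{p}^i$ are purely normal to the slice, so $Y=0$ and only the term $\mathbb{U}_i(V)$ survives, with $V=\Omega=\sqrt{1+r^2}$ and $V=y^i=r\tilde{x}^i$ respectively (up to lower order); the fields $\mathfrak{c}^i$ and $\mathfrak{j}^i$ are tangent to the slice, so $V=0$ and only $\mathbb{V}_i(Y)$ survives, with $Y$ the restriction of $\mathfrak{c}^i$, resp.\ $\mathfrak{j}^i$, to $\mathbb{H}^3$. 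For the latter I would record that, along the coordinate spheres of an end of $\mathbb{H}^3$, the angular part of $\mathfrak{c}^i$ is asymptotic to $\tilde{\nabla}\tilde{x}^i$ and that of $\mathfrak{j}^i$ to $\tilde{\epsilon}^{ab}\tilde{\nabla}_b\tilde{x}^i$, while the remaining (radial) parts will be seen not to contribute in the limit.

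For the energy and the linear momentum, I would substitute $g_{rr}=\frac{1}{r^2}-\frac{1}{r^4}+\frac{g^{(-5)}_{rr}}{r^5}+O(r^{-6})$, $g_{ra}=O(r^{-3})$, $g_{ab}=r^2\tilde{\sigma}_{ab}+\frac{g^{(-1)}_{ab}}{r}+O(r^{-2})$ into $\mathbb{U}_i(V)$ and contract with $\nu^i$ and the area form on $\Sigma_r$. The key point is that the hyperbolic reference is exactly the $\frac{1}{r^2}-\frac{1}{r^4}$ part of $g_{rr}$ together with $r^2\tilde{\sigma}_{ab}$, so that, after the usual subtraction of the (divergent) reference contribution, the integrand depends only on $g-g^{\mathbb{H}^3}$ and its first derivatives; this difference is $O(r^{-5})$ in the $rr$-component and $O(r^{-1})$ in the $ab$-component. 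Keeping the surviving $O(r^{-3})$ piece, one finds that the $\partial_r$-derivative hitting $\frac{g^{(-1)}_{ab}}{r}$ in the first term of $\mathbb{U}_i(V)$, together with the $D^rV\,(g_{rr}-g^{\mathbb{H}^3}_{rr})$ contribution, produces precisely the combination $g^{(-5)}_{rr}+\frac{3}{2}\,\mathrm{tr}_{S^2}g^{(-1)}_{ab}$, integrated against $V/r\to 1$ for $\frac{\partial}{\partial t}$ and against $V/r\to\tilde{x}^i$ for $\mathfrak{p}^i$, while the trace-free and off-diagonal parts of $g^{(-1)}_{ab}$ and the $g_{ra}$ terms integrate to zero over $S^2$. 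This reproduces \eqref{total_energy} and \eqref{total_momentum}.

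For the center of mass and the angular momentum, I would expand $\mathbb{V}_i(Y)=(k_{ij}-\mathrm{tr}\,k\,g_{ij})Y^j$ and contract with $\nu^i$. Using the fall-off $k_{rr}=O(r^{-5})$, $k_{ra}=k^{(-3)}_{ra}+O(r^{-4})$, $g_{ra}=O(r^{-3})$ and the corresponding behavior of $\mathrm{tr}\,k$, one checks that the radial component of $Y$, the $g_{ra}$ cross terms and the trace terms drop out in the limit, leaving only $k^{(-3)}_{ra}$ paired with the angular part of $Y$ and with $\nu^r$. Since the angular part of $Y$ is $\tilde{\nabla}\tilde{x}^i$ for $\mathfrak{c}^i$, resp.\ $\tilde{\epsilon}^{ab}\tilde{\nabla}_b\tilde{x}^i$ for $\mathfrak{j}^i$, the powers of $r$ in $\nu^r$ and in the area form combine to give the finite limits $\frac{1}{8\pi}\int_{S^2}k^{(-3)}_{ra}\tilde{\nabla}^a\tilde{x}^i\,dS^2$ and $\frac{1}{8\pi}\int_{S^2}k^{(-3)}_{ra}\tilde{\epsilon}^{ab}\tilde{\nabla}_b\tilde{x}^i\,dS^2$ (up to the sign conventions fixed in the first step). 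An integration by parts on the closed surface $S^2$ --- moving $\tilde{\nabla}^a$, resp.\ $\tilde{\epsilon}^{ab}\tilde{\nabla}_b$, off $k^{(-3)}_{ra}$ and onto $\tilde{x}^i$ --- turns these into the expressions \eqref{total_center_mass} and \eqref{total_angular_momentum}.

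\emph{The main obstacle} is the energy computation: extracting the finite $O(r^{-3})$ coefficient of $\mathbb{U}_i(V)\nu^i$ when $V\sim r$ and $d\Sigma_r\sim r^2$, which forces one to carry $g_{rr}$ to order $r^{-5}$ and $g_{ab}$ to order $r^{-1}$, to verify that after the reference subtraction all would-be-divergent terms cancel, and --- most delicately --- to get the numerical coefficient $\tfrac{3}{2}$ exactly right, matching the known form of the Chru\'sciel--Herzlich-type charge. A secondary point is to pin down the leading asymptotics of the tangential parts of $\mathfrak{c}^i$ and $\mathfrak{j}^i$ on $\mathbb{H}^3$ and to confirm that their large radial components are annihilated in the limit by the fall-off of $k_{rr}$, $g_{ra}$ and $\mathrm{tr}\,k$. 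One should also note that the two definitions are insensitive to the residual coordinate freedom in Definition~\ref{a_h_coordinates}, since any two admissible charts differ by an asymptotic isometry of $\mathbb{H}^3$, under which both sets of charges transform in the same way.
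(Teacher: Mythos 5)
Your overall plan is sound, and for the charges $C^i$ and $J^i$ it is essentially the paper's argument made explicit: the paper merely observes that $\mathfrak{c}^i$ and $\mathfrak{j}^i$ are tangent to the slice $t=0$ and leaves the evaluation of $\int \mathbb V_i(Y)\nu^i$ as "easy to see," which is exactly your computation (only the $k_{ra}$ term paired with the angular part of $Y$ survives, followed by an integration by parts on $S^2$). For the energy and linear momentum, however, you take a genuinely different route. You propose to expand $\mathbb U_i(V)\nu^i$ directly from Definition \ref{a_h_coordinates} and extract the finite $O(r^{-3})$ coefficient, which is precisely the step you flag as the main obstacle (cancellation of the divergent reference terms and the coefficient $\tfrac{3}{2}$ in $g^{(-5)}_{rr}+\tfrac{3}{2}\,tr_{S^2}g^{(-1)}_{ab}$). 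The paper avoids this computation entirely: it quotes the result of Miao--Tam--Xie, which identifies the large-sphere limits of the Brown--York-type integrals $\frac{1}{8\pi}\int r(H_0-h)\,d\Sigma_r$ and $\frac{1}{8\pi}\int r\tilde x^i(H_0-h)\,d\Sigma_r$ with $H(V,Y)$ for $\frac{\partial}{\partial t}$ and $\mathfrak p^i$, and then uses $h-|H|=O(r^{-6})$ together with Theorem \ref{thm_new} (which already expresses $E$ and $P^i$ as limits of $\frac{1}{8\pi}\int\Omega(H_0-|H|)$, etc.) to conclude. So the citation buys the paper exactly the delicate expansion you would have to carry out by hand; conversely, your direct route is self-contained, treats all ten charges uniformly, and does not pass through the quasi-local quantities at all, but to be complete you must actually execute the $\mathbb U_i(V)$ expansion (or cite the same Chru\'sciel--Herzlich/Miao--Tam--Xie-type computation), and you should fix the sign conventions for $\mathfrak c^i$ carefully, since the quasi-local limit \eqref{total_center_mass_p} and the definition \eqref{total_center_mass} differ by a sign.
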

\begin{proof}
In \cite{Miao-Tam-Xie}, Miao, Tam and Xie compute the limit of the Brown-York mass and prove that 
\[
\begin{split}
 \frac{1}{8 \pi}\lim_{r \to \infty} \int r(H_0 - h)  d \Sigma_r  =& H(V(\frac{\partial}{\partial t}), Y(\frac{\partial}{\partial t})), \\
 \frac{1}{8 \pi}\lim_{r \to \infty} \int r\tilde x^i (H_0 - h)  d \Sigma_r  =&  H(V(\mathfrak{p}^i), Y(\mathfrak{p}^i))
\end{split}
\]
where $h$ is the mean curvature of $\Sigma_r$ in $M$. We conclude that
\[
\begin{split}
 E  =& H(V(\frac{\partial}{\partial t}), Y(\frac{\partial}{\partial t})), \\
 P^i  =&  H(V(\mathfrak{p}^i), Y(\mathfrak{p}^i)),
\end{split}
\]
since  $h-|H| = O(r^{-6})$.
 
For $C^i$ and $J^i$, we observe that $\mathfrak{c}^i$ and $\mathfrak{j}^i$ are tangent to the static slice $t=0$ and it is easy to see that
\[
\begin{split}
 C^i  =& H(V(\mathfrak{c}^i), Y(\mathfrak{c}^i))\\
 J^i  =&  H(V(\mathfrak{j}^i), Y(\mathfrak{j}^i)).
\end{split}
\]
\end{proof}

We compare the conserved quantities for  asymptotically AdS initial data sets to  the conserved quantities for asymptotically hyperbolic initial data sets we studied in \cite{Chen-Wang-Yau4}. \eqref{total_energy} and \eqref{total_momentum} resemble the total energy-momentum for the hyperbolic case (see Definition 1.4 of \cite{Chen-Wang-Yau4}). However, the second fundamental form $k$ does not contribute to them in the AdS case. The total angular momentum in  \eqref{total_angular_momentum} is the same as the total angular momentum for  the hyperbolic case (see Theorem 7.3 of \cite{Chen-Wang-Yau4}). The total conserved quantity $C^i$ does not seem to have a good analogy in the hyperbolic case; It is rather different from the center of mass in Theorem 7.3 of \cite{Chen-Wang-Yau4}.
\subsection{Limit of the quasi-local energy}
In this subsection, we evaluate the limit of the quasi-local energy at the infinity of asymptotically AdS initial data sets and show that it converges to the linear function dual to the total conserved quantities. First, we derive an expression for the limit of quasi-local energy $E(\Sigma_r,X_r,T_0)$ for a family of surfaces $\Sigma_r$ and a family of isometric embeddings $X_r$ of $\Sigma_r$ into the reference spacetime. Then we apply the result to the family of coordinate spheres at the infinity of an asymptotically AdS initial data set.
\begin{theorem} \label{linearization_energy}
Let $\Sigma_r$ be a family of surfaces and $X_r$ be a family of isometric embeddings of $\Sigma_r$ into the reference spacetime. Suppose the mean curvature vectors $H$ of $\Sigma_r$ and $H_0$ of $X_r(\Sigma_r)$ are both spacelike for $r > R_0$ and
\[
\lim_{r \to \infty} \frac{|H|}{|H_0|} =1.
\]
Then the limit of $E(\Sigma_r,X_r,T_0)$ is the same as the limit of 
\[
\frac{1}{8 \pi} \int  \left [ - \langle T_0, \frac{J_0}{|H_0|}  \rangle (|H_0| - |H|) + (\alpha_{H_0} - \alpha_{H}) (T_0^\top) \right ] d \Sigma_r
\]
as long as the limits exist. 
\end{theorem}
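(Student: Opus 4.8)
\emph{Proof strategy.} The plan is to strip away the terms that already coincide and reduce what remains to an elementary one-variable estimate. Since $E(\Sigma_r,X_r,T_0)$ and the displayed expression are both invariant under applying an isometry of the reference spacetime to $(X_r,T_0)$, I would work in a fixed static chart with $T_0=\frac{\partial}{\partial t}$. Write $a=-\langle T_0^\perp,T_0^\perp\rangle$ (a positive function on $\Sigma_r$, since $T_0^\top$ is spacelike and $\langle T_0,T_0\rangle<0$) and $b=div(T_0^\top)$; by \eqref{decompose_t}, $T_0^\top=-\Omega^2\nabla\tau$ and $T_0^\perp=\Omega\sqrt{1+\Omega^2|\nabla\tau|^2}\,\breve e_4$, so $a=\Omega^2(1+\Omega^2|\nabla\tau|^2)$ and $b=-div(\Omega^2\nabla\tau)$. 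Setting
\[
\Phi(x)=\sqrt{a x^{2}+b^{2}}-b\,\sinh^{-1}\frac{b}{x\sqrt a}\,,\qquad x>0,
\]
Definition \ref{energy_invariant} becomes
\[
8\pi E(\Sigma_r,X_r,T_0)=\int_{\Sigma_r}\big[\Phi(|H_0|)-\Phi(|H|)\big]\,d\Sigma_r+\int_{\Sigma_r}(\alpha_{H_0}-\alpha_H)(T_0^\top)\,d\Sigma_r,
\]
and the one-form integral is precisely the $\alpha$-term in the displayed expression. Thus the assertion reduces to showing that $\int_{\Sigma_r}[\Phi(|H_0|)-\Phi(|H|)]\,d\Sigma_r$ and $\int_{\Sigma_r}\big(-\langle T_0,\frac{J_0}{|H_0|}\rangle\big)(|H_0|-|H|)\,d\Sigma_r$ have the same limit.

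Next I would identify $-\langle T_0,\frac{J_0}{|H_0|}\rangle$ with $\Phi'(|H_0|)$. Differentiation gives $\Phi'(x)=\frac{\sqrt{ax^{2}+b^{2}}}{x}$. On the other hand, writing the mean-curvature gauge in the frame $\{\breve e_3,\breve e_4\}$ as in \eqref{gauge}, one has $\frac{J_0}{|H_0|}=\sinh\theta\,\breve e_3+\cosh\theta\,\breve e_4$, where by \eqref{gauge_angle} $\sinh\theta=-\frac{div(\Omega^2\nabla\tau)}{|H_0|\,\Omega\sqrt{1+\Omega^2|\nabla\tau|^2}}=\frac{b}{|H_0|\sqrt a}$; since the normal part of $T_0$ is $\sqrt a\,\breve e_4$ and $J_0$ is normal,
\[
-\Big\langle T_0,\frac{J_0}{|H_0|}\Big\rangle=\sqrt a\,\cosh\theta=\sqrt a\,\sqrt{1+\frac{b^{2}}{a|H_0|^{2}}}=\frac{\sqrt{a|H_0|^{2}+b^{2}}}{|H_0|}=\Phi'(|H_0|).
\]
Hence, after cancelling the common $\alpha$-term, the difference of the two quantities whose limits we want to match is
\[
\int_{\Sigma_r}\big[\Phi(|H_0|)-\Phi(|H|)-\Phi'(|H_0|)(|H_0|-|H|)\big]\,d\Sigma_r,
\]
so it remains to prove this tends to $0$.

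For that I would exploit the concavity of $\Phi$: since $\Phi''(x)=-\frac{b^{2}}{x^{2}\sqrt{ax^{2}+b^{2}}}\le0$, the integrand is $\ge0$, and comparing $\Phi$ with its tangent line at $|H|$ bounds it above by $\big(\Phi'(|H|)-\Phi'(|H_0|)\big)(|H_0|-|H|)$. Writing $\Phi'(x)=\sqrt{a+b^{2}/x^{2}}$, rationalizing the difference, and using $\Phi'(|H|)+\Phi'(|H_0|)\ge|b|\big(\tfrac{1}{|H|}+\tfrac{1}{|H_0|}\big)$ (which follows from $\Phi'(x)\ge|b|/x$), one gets the pointwise bound
\[
0\le\Phi(|H_0|)-\Phi(|H|)-\Phi'(|H_0|)(|H_0|-|H|)\le|div(T_0^\top)|\,\Big(1-\frac{|H|}{|H_0|}\Big)\Big(\frac{|H_0|}{|H|}-1\Big),
\]
whose right-hand side tends to $0$ because $\frac{|H|}{|H_0|}\to1$. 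The hard part will be exactly this last step: one needs $\frac{|H|}{|H_0|}\to1$ with enough uniformity along $\Sigma_r$ to make the error vanish after integration against $d\Sigma_r$, i.e. to dominate the possible growth of $\int_{\Sigma_r}|div(T_0^\top)|\,d\Sigma_r$. In the situations where the theorem will be applied — coordinate spheres with a static-slice reference, where $\tau\equiv0$ so that $b=div(T_0^\top)\equiv0$ — the function $\Phi$ is linear and the identity is in fact exact, so this subtlety does not arise; everything else is a routine computation.
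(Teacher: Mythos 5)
Your proposal is correct and takes essentially the same route as the paper: there the energy is written with $x=|H|/|H_0|$ and $Y= div(T_0^\top)/\big(|H_0|\sqrt{-\langle T_0^\perp,T_0^\perp\rangle}\big)$, the function $f(x)=\sqrt{x^2+Y^2}-Y\sinh^{-1}(Y/x)$ is expanded to first order at $x=1$, and $|H_0|\sqrt{-\langle T_0^\perp,T_0^\perp\rangle}\,f'(1)$ is identified with $-\langle T_0,\frac{J_0}{|H_0|}\rangle$ via \eqref{gauge} and \eqref{gauge_angle}, exactly as you identify $\Phi'(|H_0|)$. Your concavity/tangent-line estimate is simply a quantitative form of the paper's $O((1-x)^2)$ remainder, and the uniformity caveat you flag at the end is present in the paper's proof as well, where it is absorbed into the hypothesis $|H|/|H_0|\to 1$ and the proviso ``as long as the limits exist.''
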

\begin{proof}
Let $x=\frac{|H|}{|H_0|}$ and 
\[ Y=\frac{ div(T_0^\top) }{|H_0|\sqrt{-  \langle T_0^\perp,T_0^\perp \rangle} }. \]   
In terms of $x$ and $Y$, the quasi-local energy is
\[ 
\begin{split}
   & E(\Sigma_r, X_r,T_0) \\
=& \frac{1}{8 \pi}  
 \int_{\Sigma_r} |H_0|\sqrt{ - \langle T_0^\perp,T_0^\perp \rangle}  \Big [  \sqrt{1 + Y^2}- \sqrt{x^2 + Y^2} - Y \sinh^{-1} Y+Y \sinh^{-1} \frac{Y}{x}  \Big ]  d \Sigma_r \\
&+\frac{1}{8 \pi}   \int_{\Sigma_r}  (\alpha_{H_0} - \alpha_{H})(T_0^\top)  d \Sigma_r.
\end{split}
\]
Let 
\[ f(x) = \sqrt{x^2+Y^2} -Y \sinh^{-1} \frac{Y}{x}. \]
For $x$ close to $1$, we have
\[ f(1) - f(x) = (1-x) \sqrt{1+Y^2} + O((1-x)^2).  \]
We compute
\[ 
\begin{split}
  & \lim_{r \to \infty} \frac{1}{8 \pi}  
   \int_{\Sigma_r} |H_0|\sqrt{ - \langle T_0^\perp,T_0^\perp \rangle}  \Big [  \sqrt{1 + Y^2}- \sqrt{x^2 + Y^2} - Y \sinh^{-1} Y+Y \sinh^{-1} \frac{Y}{x}  \Big ]  d \Sigma_r  \\
=&  \lim_{r \to \infty} \frac{1}{8 \pi}  
   \int_{\Sigma_r} |H_0|\sqrt{ - \langle T_0^\perp,T_0^\perp \rangle} (1- x ) \sqrt{1+Y^2}   d \Sigma_r  \\
= & \lim_{r \to \infty}  \int_{\Sigma_r} ( |H_0| - |H|) \sqrt{ - \langle T_0^\perp,T_0^\perp \rangle}  \sqrt{1+\frac{ div(T_0^\top) ^2}{ -  \langle T_0^\perp,T_0^\perp \rangle |H_0|^2 }}   d \Sigma_r 
\end{split}
\]
Recall that 
\[   T_0 ^\perp =  \sqrt{ - \langle T_0^\perp,T_0^\perp \rangle} \breve e_4 \]
and 
\[
\begin{split}
- \langle T_0, \frac{J_0}{|H_0|} \rangle = & - \sqrt{ - \langle T_0^\perp,T_0^\perp \rangle}   \langle \breve e_4, \frac{J_0}{|H_0|} \rangle \\
=&  \sqrt{ - \langle T_0^\perp,T_0^\perp \rangle}  \sqrt{1+\frac{ div(T_0^\top) ^2}{ -  \langle T_0^\perp,T_0^\perp \rangle |H_0|^2 }},
\end{split}
\]
where  \eqref{gauge_angle} and \eqref{gauge} are used in the last equality. This finishes the proof of the theorem.
\end{proof}
We are ready to show that for an asymptotically AdS initial data set, the large sphere limit of the quasi-local energy is the linear function dual to the total conserved quantities in Definition \ref{conserved}. 
\begin{theorem}
Let $(M,g,k)$ be an asymptotically AdS initial data set and $\Sigma_r$  be the coordinate spheres. Let $X_r$ be an isometric embedding of $\Sigma_r$ into the static slice $t=0$ of the AdS spacetime such that $y^i (X_r) = r \tilde x^i +O(1)$ and $\Omega (X_r)= r+O(1)$. Consider the observer 
\[  T_0 = A (y^0\frac{\partial}{\partial y^4}-y^4\frac{\partial}{\partial y^0})+B_k(y^k\frac{\partial}{\partial y^0}+y^0\frac{\partial}{\partial y^k}) + D_k( y^k\frac{\partial}{\partial y^4}+y^4\frac{\partial}{\partial y^k}) +F_k\epsilon_{ijk}y^i\frac{\partial}{\partial y^j}.\]
We have
\[
\lim_{r \to \infty} E(\Sigma_r ,X_r,T_0) = AE + B_k P^k +D_k C^k + F_k J^k.
\]
where $E$, $P^k$, $C^k$ and $J^k$ are the total conserved quantities.
\end{theorem}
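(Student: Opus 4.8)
The plan is to reduce the statement to Theorem \ref{linearization_energy} and then to exploit linearity. First I would apply Theorem \ref{linearization_energy} to the family $\Sigma_r$, $X_r$. Its hypothesis $\lim_{r\to\infty}|H|/|H_0| = 1$ holds: by \eqref{expansion_mean_curvature_physical}, $|H| = 2 + O(r^{-2})$ on $\Sigma_r$, while \eqref{expansion_metric} together with the linearized isometric embedding equation forces the mean curvature $H_0$ of $X_r(\Sigma_r)\subset\mathbb{H}^3$ to satisfy $H_0 = 2 + O(r^{-2})$, with $H_0 = |H_0| > 0$ for $r$ large since these images are convex. Theorem \ref{linearization_energy} then gives
\[
\lim_{r\to\infty} E(\Sigma_r,X_r,T_0) = \lim_{r\to\infty}\frac{1}{8\pi}\int_{\Sigma_r}\left[-\left\langle T_0,\frac{J_0}{|H_0|}\right\rangle(H_0 - |H|) + (\alpha_{H_0}-\alpha_H)(T_0^\top)\right]d\Sigma_r
\]
whenever the right-hand side converges.

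The right-hand integrand depends \emph{linearly} on $T_0$, since $J_0$, $H_0$, $|H|$, $\alpha_{H_0}$ and $\alpha_H$ are determined by $X_r(\Sigma_r)$ alone while $T_0\mapsto T_0^\top$ is linear. Writing $I_r[K]$ for this integral with $T_0$ replaced by a general vector field $K$ along $X_r(\Sigma_r)$, it therefore suffices to identify $\lim_{r\to\infty}I_r[K]$ for $K$ running over the basis $\frac{\partial}{\partial t}$, $\mathfrak{p}^i$, $\mathfrak{c}^i$, $\mathfrak{j}^i$ of Killing fields and then to reassemble $I_r[T_0]$ from these by linearity with the coefficients $A$, $B_k$, $D_k$, $F_k$, the $A$--term of $T_0$ being a multiple of $\frac{\partial}{\partial t}$. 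One should also record at the outset that the prescribed $T_0$ is future timelike along $\Sigma_r$ for $r$ large — which holds provided $A$ dominates the remaining coefficients along the leading behavior $\Omega,y^i = O(r)$ — so that $E(\Sigma_r,X_r,T_0)$ is defined and Theorem \ref{linearization_energy} genuinely applies.

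Next I would use that each $X_r$ maps into the static slice $t=0$, the totally geodesic copy of $\mathbb{H}^3 = \{y^0=0\}$ in AdS. As in the proof of Theorem \ref{thm_new}, this gives $\tau = 0$, $\alpha_{H_0} = 0$, $\frac{J_0}{|H_0|} = \breve e_4$ (the future timelike unit normal, which along $\Sigma_r$ is the $\partial_{y^0}$--direction), and $(\partial_{y^i})^\top = \nabla y^i$, $(\partial_{y^4})^\top = -\nabla\Omega$ on $\Sigma_r$. Evaluating $I_r$ on the basis: $\frac{\partial}{\partial t}$ and $\mathfrak{p}^i$ restrict on $\{t=0\}$ to $\Omega\partial_{y^0}$ and $y^i\partial_{y^0}$, so their tangential parts vanish and $I_r[\frac{\partial}{\partial t}] = \frac{1}{8\pi}\int_{\Sigma_r}\Omega(H_0-|H|)\,d\Sigma_r$, $I_r[\mathfrak{p}^i] = \frac{1}{8\pi}\int_{\Sigma_r} y^i(H_0-|H|)\,d\Sigma_r$; the fields $\mathfrak{c}^i$, $\mathfrak{j}^i$ are tangent to the slice with no $\partial_{y^0}$--component, so the $(H_0-|H|)$ term drops and $I_r[\mathfrak{c}^i] = -\frac{1}{8\pi}\int_{\Sigma_r}\alpha_H\big((\mathfrak{c}^i)^\top\big)\,d\Sigma_r$, $I_r[\mathfrak{j}^i] = -\frac{1}{8\pi}\int_{\Sigma_r}\alpha_H\big((\mathfrak{j}^i)^\top\big)\,d\Sigma_r$. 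These are precisely the integrals whose limits are shown to be $E$, $P^i$, $C^i$, $J^i$ in the proof of Theorem \ref{thm_new} (see \eqref{total_energy_p}--\eqref{total_angular_momentum_p}). Reassembling with the coefficients yields $\lim_{r\to\infty} E(\Sigma_r,X_r,T_0) = AE + B_kP^k + D_kC^k + F_kJ^k$.

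The main obstacle is bookkeeping rather than a single hard estimate. One must verify that the $O((1-x)^2)$ remainder in Theorem \ref{linearization_energy} is negligible in the limit for this family, that $T_0$ is an admissible observer on all of $\Sigma_r$, and — most delicately — keep every sign straight in the identifications $\frac{J_0}{|H_0|} = \breve e_4 = \partial_{y^0}$, $T_0^\top$ as the tangential projection of the ambient $\mathbb{R}^{3,2}$ coordinate fields via $(\partial_{y^i})^\top = \nabla y^i$ and $(\partial_{y^4})^\top = -\nabla\Omega$, and the $\langle K,\frac{\partial}{\partial t}\rangle$ pairing that enters \eqref{qlcq2}. Once these are in place the evaluation reduces, term by term, to computations already carried out in the proof of Theorem \ref{thm_new}.
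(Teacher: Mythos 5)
Your proposal is correct and follows essentially the same route as the paper: reduce via Theorem \ref{linearization_energy} to the integral of $-\langle T_0, \frac{J_0}{|H_0|}\rangle(H_0-|H|)+(\alpha_{H_0}-\alpha_H)(T_0^\top)$, split $T_0$ according to the normal/tangential decomposition relative to the static slice $y^0=0$, and identify the resulting terms with the limits computed in Theorem \ref{thm_new}. The only cosmetic difference is that you evaluate the linear functional on each basis Killing field and reassemble by linearity, whereas the paper records the leading expansions of $-\langle T_0,\frac{J_0}{|H_0|}\rangle$ and $(T_0^\top)_a$ directly; the content is the same.
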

\begin{proof}
Recall that the static slice $t=0$ is the same as the hypersurface $y^0=0$.
The Killing fields $ y^0\frac{\partial}{\partial y^4}-y^4\frac{\partial}{\partial y^0}$ and $y^k\frac{\partial}{\partial y^0}+y^0\frac{\partial}{\partial y^k}$ are normal to the hypersurface. On the other hand, $\epsilon_{ijk}y^i\frac{\partial}{\partial y^j}$ and $y^k\frac{\partial}{\partial y^4}+y^4\frac{\partial}{\partial y^k}$ are tangent to the hypersurface. As a result, 
\[  
- \langle T_0, \frac{J_0}{|H_0|} \rangle  =  - \langle T^{\perp}_0, \frac{J_0}{|H_0|} \rangle= r(A + B_i \tilde x^i) + O(1).
\]
It is also easy to verify that 
\[ (T_0^\top)_a =  r^2(  D_k \tilde x^k_a +  F_k \epsilon_{ijk}\tilde x ^i \tilde x ^j_a   ) + O(r). \]
The theorem follows directly from Theorem \ref{thm_new} and Theorem \ref{linearization_energy}.
\end{proof}

\subsection{Evolution of the total conserved quantities under the Einstein equation}
In this subsection, we study the evolution of  the total conserved quantities for asymptotically AdS initial data sets under the Einstein equation.  

We assume that the initial data set $(M,g,k)$ satisfies the vacuum constraint equation (with cosmological constant $\kappa=-1$)
\begin{equation}\label{constraint}
\begin{split}
R(g) + (tr_g k)^2 - |k|^2 = & -6 \\
\bar \nabla ^{i}k_{ij}  - \partial_j (tr_g k) =& 0
\end{split}
\end{equation}
where $\bar \nabla$ is the covariant derivative with respect to $g$.

We shall fix an asymptotically flat coordinate system on $M$ with respect to $(g_{ij}(0), k_{ij}(0))$ and consider a family $(g_{ij}(t), k_{ij}(t))$ that evolves according to the vacuum Einstein  evolution equation (with cosmological constant $\kappa=-1$)
\begin{equation} \label{evolution}
\begin{split}
\partial _t g_{ij} & =-2N k_{ij}+(\mathcal L _{\gamma} g)_{ij}\\
\partial _t k_{ij} & = -\bar \nabla_i\bar \nabla_j N+N\left(R_{ij} +g_{ij}+ (tr k) k_{ij} - 2 k_{il} k^l \,_j\right)  + (\mathcal L _{\gamma} k)_{ij}
\end{split}
\end{equation}
where $N$ is the lapse function, $\gamma$ is the shift vector, and $\mathcal L$ is the Lie derivative.

\begin{theorem}\label{total_AdS_evolve}
Let $(M,g,k)$ be an  asymptotically AdS initial data set.  Let $(M,g(t),k(t))$ be the solution to the vacuum Einstein equation with $g(0)=g$ and $k(0)=k$, and with lapse $N=\sqrt{r^2+1}$ and a vanishing shift vector. Let $E(t)$, $P^i(t)$, $C^i(t)$ and $J^i (t)$ be the total conserved quantities for $(M,g(t),k(t))$ defined in Definition \ref{conserved}. We have
\[
\begin{split}
\partial_t E(t) = & 0 \\
\partial_t P^i(t) = &  -C^i(t) \\
\partial_t C^i(t) = &P^i(t) \\
\partial_t J^i(t) = & 0.
\end{split}
\]
\end{theorem}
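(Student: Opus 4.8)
With vanishing shift the evolution equations \eqref{evolution} reduce to $\partial_t g_{ij} = -2N k_{ij}$ and $\partial_t k_{ij} = -\bar\nabla_i\bar\nabla_j N + N\big(R_{ij} + g_{ij} + (tr_g k)\,k_{ij} - 2k_{il}k^l{}_j\big)$, and the lapse $N=\sqrt{r^2+1}$ is exactly the static potential of the AdS reference, i.e.\ the generator of AdS time translation. The plan is: \emph{(i)} insert the asymptotic expansions of Definition \ref{a_h_coordinates} into these equations, match orders in $r^{-1}$, and derive ordinary differential equations in $t$ for the coefficients entering Definition \ref{conserved} --- the mass aspect $\mu := g^{(-5)}_{rr}+\tfrac32 tr_{S^2}g^{(-1)}_{ab}$ and the momentum aspect one-form $p_a := k^{(-3)}_{ra}$ on $S^2$; and \emph{(ii)} integrate those $t$-ODEs over $S^2$ against $1$ and against the functions $\tilde x^i$ to obtain $\partial_t E$, $\partial_t P^i$, $\partial_t C^i$, $\partial_t J^i$. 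A preliminary point is that the flow stays in the class of Definition \ref{a_h_coordinates}: here the precise choice of $N$ is what forces the fixed leading profiles $r^2\tilde\sigma_{ab}$ and $r^{-2}-r^{-4}$ to be preserved under $\partial_t g = -2Nk$, while the vacuum constraints \eqref{constraint} propagate along the flow.

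The central step is extracting $\partial_t\mu$ and $\partial_t p_a$. Tracing the $ab$-component of $\partial_t g = -2Nk$ over $S^2$ and combining it with the $rr$-component produces $\partial_t\mu$ in terms of the $k$-data; the $ra$-component of the $\partial_t k$ equation produces $\partial_t p_a$, and this requires expanding the mixed Hessian $\bar\nabla_r\bar\nabla_a N$ and the Ricci term $N R_{ra}$ --- hence the curvature of $g$ --- to the appropriate order in $r^{-1}$. The constraint equations \eqref{constraint} are indispensable here: the Hamiltonian constraint $R(g)+(tr_g k)^2-|k|^2=-6$ links the curvature expansion of $g$, in particular the combination forming $\mu$, to the $k$-data, while the momentum constraint $\bar\nabla^i k_{ij}-\partial_j(tr_g k)=0$, in its $j=r$ and $j=a$ components, expresses $\tilde\nabla^a p_a$, $\tilde\epsilon^{ab}\tilde\nabla_b p_a$ and the divergences of the remaining $k$-coefficients in terms of the other asymptotic data. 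Using these to eliminate the auxiliary coefficients, one expects $\partial_t\mu$ to come out as an $S^2$-divergence whose $\ell=1$ part is that of $-\tilde\nabla^a p_a$, and $\partial_t p_a$ to come out as $-\tfrac12\tilde\nabla_a\mu$ plus a co-exact one-form with no $\ell=1$ part.

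Finally, substitute into \eqref{total_energy}--\eqref{total_angular_momentum}, which read (all $S^2$-integrals with respect to the round area element) $8\pi E=\int_{S^2}\mu$, $8\pi P^i=\int_{S^2}\tilde x^i\mu$, $8\pi C^i=\int_{S^2}\tilde x^i\,\tilde\nabla^a p_a = -\int_{S^2}(\tilde\nabla^a\tilde x^i)\,p_a$, $8\pi J^i=\int_{S^2}\tilde x^i\,\tilde\epsilon^{ab}\tilde\nabla_b p_a$; differentiate under the integral; and integrate by parts on $S^2$ using that $\tilde x^i|_{S^2}$ is a first spherical harmonic ($\tilde\Delta\tilde x^i=-2\tilde x^i$, $\tilde\nabla_a\tilde\nabla_b\tilde x^i=-\tilde x^i\tilde\sigma_{ab}$) together with $\int_{S^2}\tilde x^i\tilde x^j=\tfrac{4\pi}{3}\delta^{ij}$. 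Then: $\partial_t\mu$ being a pure $S^2$-divergence gives $\partial_t E=0$; its $\ell=1$ part being that of $-\tilde\nabla^a p_a$ gives $\partial_t P^i=-C^i$; pairing $\partial_t p_a=-\tfrac12\tilde\nabla_a\mu$ with $\tilde\nabla^a\tilde x^i$ and using $\tilde\Delta\tilde x^i=-2\tilde x^i$ gives $\partial_t C^i=P^i$; and the co-exact part of $\partial_t p_a$ having no $\ell=1$ content gives $\partial_t J^i=0$. As a consistency check, these four relations mirror the commutator structure $[\frac{\partial}{\partial t},\mathfrak{p}^i]=\mathfrak{c}^i$, $[\frac{\partial}{\partial t},\mathfrak{c}^i]=-\mathfrak{p}^i$, $[\frac{\partial}{\partial t},\mathfrak{j}^i]=0$ of the AdS isometry algebra.

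The main obstacle is step \emph{(i)}: pushing the expansion of the nonlinear term $-\bar\nabla_i\bar\nabla_j N + N(R_{ij}+g_{ij}+(tr_g k)k_{ij}-2k_{il}k^l{}_j)$ to sufficiently high order --- the Ricci curvature of $g$ alone couples $g^{(-1)}_{ab}$, $g^{(-5)}_{rr}$ and their $\tilde\nabla$-derivatives, and the lapse grows like $r$ so each order of $N$ mixes with two orders of the metric and connection --- and then invoking the Hamiltonian and momentum constraints at exactly the matching orders so the auxiliary coefficients cancel and $\partial_t\mu$, $\partial_t p_a$ close up. Everything after that is routine integration by parts on $S^2$.
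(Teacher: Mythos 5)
Your overall skeleton---differentiate the mass aspect $\mu=g^{(-5)}_{rr}+\tfrac32 tr_{S^2}g^{(-1)}_{ab}$ and the momentum aspect $p_a=k^{(-3)}_{ra}$ in $t$ using \eqref{evolution} and the constraints \eqref{constraint}, then integrate against $1$ and $\tilde x^i$ on $S^2$---is the same as the paper's, and your consistency check against the commutators of the AdS isometry algebra is apt. But as written the argument has a gap at exactly the step you yourself flag as the main obstacle: the two identities you need ($\partial_t\mu$ a divergence whose $\ell=1$ part is that of $-\tilde\nabla^a p_a$, and $\partial_t p_a=-\tfrac12\tilde\nabla_a\mu$ plus an $\ell=1$-trivial remainder) are only asserted as ``expected,'' and the brute-force expansion of $R_{ij}$, $\bar\nabla_i\bar\nabla_j N$ and the Hamiltonian constraint that you propose in their place is never carried out. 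The paper closes this step with two devices you do not use, and they are what make the computation short. For $E$ and $P^i$ it never expands the metric coefficients or the Ricci tensor at all: it differentiates the mean curvature $h(t)$ of the coordinate spheres through the explicit formula $h=\bigl(\partial_r\ln\sqrt{\det\sigma_{ab}}-\nabla^a g_{ra}\bigr)/\sqrt{g_{rr}-\sigma^{ab}g_{ra}g_{rb}}$ (whose $r^{-3}$ coefficient carries $\mu$), inserts $\partial_t g_{ab}=-2\sqrt{r^2+1}\,k_{ab}$, and then uses only the \emph{radial component of the momentum constraint} to collapse the result to $\partial_t h=-r^2\nabla^a k_{ar}+O(r^{-4})$; integrating against $r$ and $r\tilde x^i$ then gives $\partial_t E=0$ and $\partial_t P^i=-C^i$, with no use of the Hamiltonian constraint anywhere. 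For $C^i$ and $J^i$ it avoids expanding $Ric_{ra}$ from the metric by combining $\partial_t k_{ra}=r\,Ric_{ra}+O(r^{-4})$ (the Hessian of $N$ cancels against $Ng_{ra}$ by the static equation) with the \emph{Codazzi equation} on $\Sigma_r$, which writes $Ric_{rb}$ as $\nabla^a$ of the traceless second fundamental form minus $\tfrac12\nabla_b h$; after taking the divergence (resp.\ curl) and integrating against $r^2\tilde x^i$, the traceless term drops out (the traceless Hessian of a first harmonic on the round sphere vanishes; the curl of a gradient vanishes) and one is left with $\partial_t C^i=P^i$ and $\partial_t J^i=0$. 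If you want to complete your version, you should either reproduce these two reductions or genuinely perform the order-by-order Ricci expansion you sketch; in the latter case the bookkeeping with $N\sim r$ is delicate and is the whole content of the proof.

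Two smaller points. Your description of the residual piece of $\partial_t p_a$ as ``a co-exact one-form with no $\ell=1$ part'' is not what actually appears: the extra term is the divergence of a traceless symmetric $2$-tensor, which is in general neither exact nor co-exact; what is true, and all that is needed, is that both its divergence and its curl integrate to zero against $\tilde x^i$. Also, the preservation of the asymptotic class of Definition \ref{a_h_coordinates} along the flow, which you raise as a preliminary point, is not addressed in the paper either; the paper's computation (like yours) only uses the expansions at a fixed time together with the evolution equations at the orders indicated, so if you wish to include that point it requires a separate argument rather than the remark that the lapse ``forces'' the leading profile to be preserved.
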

\begin{remark}
The evolution equations for $E$ and $P^i$ are proved previously in \cite[Theorem 5.1]{CCS} with a different convention for $C^i$. 
\end{remark}
\begin{proof}
Let $h(t)$ be the mean curvature of $\Sigma_r$ in the hypersurface  $(M,g(t))$ and $H(t)$ be the mean curvature vector of $\Sigma_r$ in the spacetime. We have  $|H(t)| = h(t) + O(r^{-4})$ and the following formula for $h(t)$ (see for example \cite{Wang-Yau3, Chen-Wang-Yau3}) :
\[ h(t) = \frac{\partial_r \ln \sqrt{det(\sigma_{ab})} - \nabla ^a g_{ra} }{\sqrt{g_{rr} - \sigma^{ab}g_{ra}g_{rb}}}  \]
where $g_{ra}$ is viewed as a 1-form on $\Sigma_r$ and $\nabla$ is the covariant derivative with respect to the induced metric on $\Sigma_r$.

From the Einstein equation  \eqref{evolution}, we derive
\[  \partial_t g_{ab} = - 2\sqrt{r^2+1}k_{ab}. \]
As a result,
\[
\begin{split}
\partial_t |H| = & \partial_t h(t) + O(r^{-4})\\
=&  -2 r^3 k_{rr} - r \sigma^{ab}k_{ab} + r ^2\sigma^{ab} \partial_r k_{ab} -2r^2 \nabla^a k_{ar} +O(r^{-4}).
\end{split}
\]
From the vacuum constraint equation, \eqref{constraint} we derive
\begin{equation}\label{momentum_constraint_r}
  g^{ij}\bar \nabla_{i} k_{jr} = \partial_r (tr_g k).  \end{equation}
The left hand side of \eqref{momentum_constraint_r} is 
\[
\begin{split}
g^{ij}\bar \nabla_{i} k_{jr}
= & g^{rr}\bar \nabla_{r} k_{rr} + g^{ab}\bar \nabla_{a} k_{br} + O(r^{-6})\\
= & r^2 \partial_r k_{rr} + 4r k_{rr} + \nabla^a k_{ar}  - \frac{1}{r} \sigma^{ab}k_{ab} + O(r^{-6})
\end{split}
\]
On the other hand, the right hand side of \eqref{momentum_constraint_r}  is 
\[
\partial_r (tr_g k) = r^2\partial_r k_{rr} + 2r k_{rr} + \sigma^{ab} \partial_r k_{ab} - 2r  \sigma^{ab}k_{ab} + O(r^{-6}).
\]
As a result, \eqref{momentum_constraint_r} implies
\[ \nabla^a k_{ar} = - 2r k_{rr} + \sigma^{ab} \partial_r k_{ab} - r  \sigma^{ab}k_{ab} + O(r^{-6}) \]
and
\[  \partial_t h(t) =- r^2 \nabla^a k_{ar} + O(r^{-4}).\]
This proves the evolution equations for $E$ and $P^i$.

To evaluate $\partial_t C^i$ and $\partial_t J^i$, we start with the evolution equation of the second fundamental  form \eqref{evolution}, which implies
\[  \partial_t k_{ar} = r Ric_{ar} + O(r^{-4}). \]
Let $\AA $ be the traceless part of second fundamental form of the surface $\Sigma_r$ in the hypersurface $(M,g(t))$. The Codazzi equation reads 
\[  \nabla^a \AA_{ab} - \frac{1}{2} \nabla_a h = Ric_{ar} . \]
The evolution of $C^i$ follows from taking the divergence of the above equation, multiplying with $r^2 x^i$, and integrating over $\Sigma_r$. For the evolution of $J^i$, we take the curl of the above equation instead.
\end{proof}
From the above theorem, it follows that the rest mass of asymptotically initial data defined by the authors in \cite{Chen-Hung-Wang-Yau} is invariant under the Einstein equation.
\begin{corollary}
Let $(M,g,k)$ be an  asymptotically AdS initial data set.  Let $(M,g(t),k(t))$ be the solution to the vacuum Einstein equation with $g(0)=g$ and $k(0)=k$, and with lapse $N=\sqrt{r^2+1}$ and a vanishing shift vector. Let $m(t)$ be the rest mass of the data $(M,g(t),k(t))$. Then we have
\[ \partial_t m(t)=0.\]
\end{corollary}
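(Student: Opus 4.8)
The plan is to deduce the corollary algebraically from Theorem~\ref{total_AdS_evolve} together with the defining property of the rest mass in \cite{Chen-Hung-Wang-Yau}: $m$ is a function of the ten total conserved quantities $(E,P^i,C^i,J^i)$ that is invariant under the action of the isometry group $SO(3,2)$ of the AdS reference spacetime on these charges. In the ``minimum of the energy over admissible observers'' characterization of $m$ this invariance is automatic, since the admissible observers form a single $SO(3,2)$--orbit and the minimum is taken over the whole orbit; if instead $m$ is given by a closed-form expression, the relevant fact is the invariance of the Casimirs of $\mathfrak{so}(3,2)$. So the first step is simply to recall that definition and the precise invariance statement.

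The key observation is that the evolution system
\[ \partial_t E = 0,\qquad \partial_t P^i = -C^i,\qquad \partial_t C^i = P^i,\qquad \partial_t J^i = 0 \]
of Theorem~\ref{total_AdS_evolve} is exactly the coadjoint action --- on the charge functional $Q$ defined by $E=Q(\tfrac{\partial}{\partial t})$, $P^i=Q(\mathfrak p^i)$, $C^i=Q(\mathfrak c^i)$, $J^i=Q(\mathfrak j^i)$ --- of the one-parameter subgroup $\phi_t\subset SO(3,2)$ generated by the Killing field $\frac{\partial}{\partial t}$, which is precisely the field appearing in the evolution \eqref{evolution} through the lapse $N=\sqrt{r^2+1}=\Omega$ (with vanishing shift). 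This is a short bracket computation with the explicit Killing fields of Section~7.2: one finds $[\frac{\partial}{\partial t},\mathfrak p^i]=\mathfrak c^i$, $[\frac{\partial}{\partial t},\mathfrak c^i]=-\mathfrak p^i$ and $[\frac{\partial}{\partial t},\mathfrak j^i]=0$ in $\mathfrak{so}(3,2)$, so that $\dot Q(\xi)=-Q([\tfrac{\partial}{\partial t},\xi])$ reproduces the four displayed equations. Hence $(E(t),P^i(t),C^i(t),J^i(t))$ is the image of $(E(0),P^i(0),C^i(0),J^i(0))$ under the single element $\phi_t\in SO(3,2)$, and the invariance of $m$ yields $m(t)=m(0)$.

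Equivalently, and avoiding the group language entirely, one may differentiate directly: $E$ and each $J^i$ are $t$--independent by Theorem~\ref{total_AdS_evolve}, and $|P|^2+|C|^2=\sum_i\big((P^i)^2+(C^i)^2\big)$ is $t$--independent because $\partial_t(|P|^2+|C|^2)=2\sum_i\big(P^i\partial_tP^i+C^i\partial_tC^i\big)=2\sum_i\big(-P^iC^i+C^iP^i\big)=0$; the pair $(P^i,C^i)$ simply rotates rigidly in a plane while $E$ and $J^i$ stay fixed. Since the rest mass of \cite{Chen-Hung-Wang-Yau} is built from the $\mathfrak{so}(3,2)$--Casimirs --- the quadratic one being proportional to $E^2-|P|^2-|C|^2+|J|^2$ --- every ingredient is constant in $t$, and therefore so is $m$. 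I expect the only real obstacle to be bookkeeping rather than analysis: one must check that the specific normalization of the rest mass (and of $C^i$, whose convention differs from the one in \cite{CCS} as noted after Theorem~\ref{total_AdS_evolve}) used in \cite{Chen-Hung-Wang-Yau} is exactly that of Definition~\ref{conserved}, after which the corollary follows with no further spacetime computation.
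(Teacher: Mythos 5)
Your proposal is correct, but it takes a genuinely different route from the paper. The paper's proof is purely computational: it quotes the closed-form expression $m^2=\tfrac{1}{2}(\alpha+\sqrt{\beta})$ from Theorem 6.7 of \cite{Chen-Hung-Wang-Yau}, with $\alpha=E^2+|\vec{j}|^2-|\vec{p}|^2-|\vec{c}|^2$ and $\beta=(E^2-|\vec{j}|^2-|\vec{p}|^2-|\vec{c}|^2)^2-4|\vec{j}\times\vec{p}|^2-4|\vec{p}\times\vec{c}|^2-4|\vec{c}\times\vec{j}|^2+8E\,\vec{c}\cdot(\vec{p}\times\vec{j})$, and then checks directly from Theorem \ref{total_AdS_evolve} that every term is constant. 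You instead recognize the evolution $\partial_tE=0$, $\partial_tP^i=-C^i$, $\partial_tC^i=P^i$, $\partial_tJ^i=0$ as the coadjoint action of the one-parameter subgroup of $SO(3,2)$ generated by $\frac{\partial}{\partial t}$ (your brackets $[\frac{\partial}{\partial t},\mathfrak{p}^i]=\mathfrak{c}^i$, $[\frac{\partial}{\partial t},\mathfrak{c}^i]=-\mathfrak{p}^i$, $[\frac{\partial}{\partial t},\mathfrak{j}^i]=0$ are correct and do reproduce the theorem) and invoke the $SO(3,2)$-invariance of the rest mass. This is more conceptual and immediately gives invariance under the flow of any Killing lapse--shift, not just $N=\sqrt{r^2+1}$, $\gamma=0$; but it leans on an invariance property of $m$ that the present paper never states --- what it quotes from \cite{Chen-Hung-Wang-Yau} is only the formula above --- so to be self-contained you must either cite that invariance explicitly or verify it on the formula. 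Relatedly, your ``avoiding the group language'' variant is incomplete as written: constancy of $E$, $|\vec{j}|$ and $|\vec{p}|^2+|\vec{c}|^2$ only controls the quadratic invariant $\alpha$, whereas $m$ also involves $\beta$, whose cross terms $|\vec{j}\times\vec{p}|^2$, $|\vec{p}\times\vec{c}|^2$, $|\vec{c}\times\vec{j}|^2$ and $E\,\vec{c}\cdot(\vec{p}\times\vec{j})$ must each be checked. They are indeed constant under the rigid rotation of the pair $(\vec{p},\vec{c})$ --- e.g.\ $\partial_t(\vec{p}\times\vec{c})=-\vec{c}\times\vec{c}+\vec{p}\times\vec{p}=0$, $\partial_t[\vec{c}\cdot(\vec{p}\times\vec{j})]=\vec{p}\cdot(\vec{p}\times\vec{j})-\vec{c}\cdot(\vec{c}\times\vec{j})=0$, and $\partial_t(|\vec{j}\times\vec{p}|^2+|\vec{c}\times\vec{j}|^2)=0$ --- and supplying these few lines is exactly the ``direct computation'' the paper performs; with them (or with the cited invariance of $m$) your argument closes the corollary.
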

\begin{proof}
Let $\vec{p}=(P^1,P^2,P^3)$, $\vec{c}=(C^1,C^2,C^3)$ and $\vec{j}=(J^1,J^2,J^3)$. From Theorem 6.7 of \cite{Chen-Hung-Wang-Yau}, the rest mass $m$ in terms of the total conserved quantities is
\[  m^2=\frac{1}{2}(\alpha+\sqrt{\beta}) \]
where
\[
\begin{aligned}
\alpha&=E^2+|\vec{j}|^2-|\vec{p}|^2-|\vec{c}|^2\\
\beta&=(E^2-|\vec{j}|^2-|\vec{p}|^2-|\vec{c}|^2)^2-4|\vec{j}\times\vec{p}|^2-4|\vec{p}\times\vec{c}|^2-4|\vec{c}\times\vec{j}|^2+8E\vec{c}\cdot (\vec{p}\times\vec{j}).
\end{aligned}
\]
The corollary follows from Theorem \ref{total_AdS_evolve} by a direct computation.
\end{proof}
 

\begin{thebibliography}{99} 
\bibitem{Abbott-Deser} L. F. Abbott and S. Deser, \textit{Stability of gravity with a cosmological constant}, Nuclear Phys. B {\bf 195} (1982),  no. ~1, 76--96.
\bibitem{Ashtekar-Magnon} A.  Ashtekar and A. Magnon, \textit{Asymptotically anti-de Sitter space-times},  Classical Quantum Gravity {\bf 1} (1984), no. 4, L39--L44. 
\bibitem{CCS} C. Cederbaum, J. Cortier, and A. Sakovich, \textit{On the center of mass of asymptotically hyperbolic initial data sets}, arXiv:1501.05559.
\bibitem{Chen-Wang-Wang} P.-N. Chen, M.-T. Wang\ and\ Y.-K. Wang, \textit{Rigidity of time-flat surfaces in the Minkowski spacetime}, Math. Res. Lett. {\bf 21} (2014), no.~6, 1227--1240. 
\bibitem{Chen-Wang-Yau1} P.-N. Chen, M.-T. Wang, and S.-T. Yau, \textit {Evaluating quasilocal energy and solving optimal embedding equation at null infinity,} Comm. Math. Phys. \textbf{308} (2011), no.3, 845--863.
\bibitem{Chen-Wang-Yau2}  P.-N. Chen, M.-T. Wang\ and\ S.-T. Yau, \textit{Minimizing properties of critical points of quasi-local energy}, Comm. Math. Phys. {\bf 329} (2014), no.~3, 919--935. 
\bibitem{Chen-Wang-Yau3}  P.-N. Chen, M.-T. Wang, and S.-T. Yau, \textit {Conserved quantities in general relativity: from the quasi-local level to spatial infinity}, Comm. Math. Phys. \textbf{338} (2015), no. 1, 31--80.
\bibitem{Chen-Wang-Yau4} P.-N. Chen, M.-T. Wang, and S.-T. Yau, \textit {Conserved quantities on asymptotically hyperbolic initial data sets}, arXiv:1409.1812
\bibitem{Chen-Hung-Wang-Yau}P.-N. Chen, P.-K. Hung M.-T. Wang, and S.-T. Yau, \textit{The rest mass of an asymptotically Anti-de Sitter spacetime}, arXiv:1510.00053
\bibitem{Chrusciel-Herzlich} P. T. Chru\'sciel\ and\ M. Herzlich, \textit{The mass of asymptotically hyperbolic Riemannian manifolds}, Pacific J. Math. {\bf 212} (2003), no.~2, 231--264. 
\bibitem{Chrusciel-Maerten-Tod} P. T. Chru\'sciel, D. Maerten,\ and\ P. Tod, \textit{Rigid upper bounds for the angular momentum and centre of mass on non-singular asymptotically anti-de Sitter space-times}, J. High Energy Phys. {\bf 2006}, no.~11, 084, 42 pp.
\bibitem{Chruscie-Nagy} P. T. Chru\'sciel\ and\ G. Nagy, \textit{The Hamiltonian mass of asymptotically anti-de Sitter space-times}, Classical Quantum Gravity {\bf 18} (2001), no.~9, L61--L68.
\bibitem{Gibbons-Hull-Warner} G. W. Gibbons, \ C. M. Hull,\ and N. P. Warner, \textit{The stability of gauged supergravity},  Nuclear Phys. B {\bf 218} (1983), no. ~1, 173--190. 
\bibitem{Henneaux-T} M. Henneaux and C. Teitelboim, \textit{Asymptotically anti-de Sitter spaces}, Comm. Math. Phys. {\bf 98} (1985), no. 3, 391--424.
\bibitem{Kwong-Miao} K.-K. Kwong and P. Miao, \textit{A functional inequality on the boundary of static manifolds}, arXiv:1602.00194
\bibitem{Li-Xia}J. Li and C. Xia, \textit{An integral formula and its applications on sub-static manifolds}, arXiv:1603.02201
\bibitem{Lin-Wang}C.-Y. Lin\ and\ Y.-K. Wang, \textit{On isometric embeddings into anti-de Sitter spacetimes}, Int. Math. Res. Not. IMRN {\bf 2015}, no.~16, 7130--7161.
\bibitem{Maerten} D. Maerten, \textit{Positive energy-momentum theorem for AdS-asymptotically hyperbolic manifolds}, Ann. Henri Poincar\'e {\bf 7} (2006), no.~5, 975--1011.
\bibitem{Miao-Tam-Xie}P. Miao, L.-F. Tam\ and\ N. Xie, \textit{Critical points of Wang-Yau quasi-local energy}, Ann. Henri Poincar\'e {\bf 12} (2011), no.~5, 987--1017.
\bibitem{Miao-Tam-Xie2}P. Miao, L.-F. Tam\ and\ N. Xie, \textit{Quasi-local mass integrals and the total mass}, arXiv:1510.07756

\bibitem{pogorelov} A. V. Pogorelov, \textit{Extrinsic geometry of convex surfaces},  Translated from the Russian by Israel Program for Scientific Translations. Translations of Mathematical Monographs, Vol. 35. American Mathematical Society, Providence, R.I., 1973. vi+669 pp.

\bibitem{Shi-Tam}Y. Shi\ and\ L.-F. Tam, \textit{Rigidity of compact manifolds and positivity of quasi-local mass}, Classical Quantum Gravity {\bf 24} (2007), no.~9, 2357--2366.
\bibitem{Qiu-Xia} G. Qiu\ and\ C. Xia, \textit{A generalization of Reilly's formula and its applications to a new Heintze-Karcher type inequality}, Int. Math. Res. Not. IMRN {\bf 2015}, no.~17, 7608--7619. 
\bibitem{Wang-Yau07} M.-T. Wang and S.-T. Yau, \textit{A generalization of Liu-Yau's quasi-local mass},  Comm. Anal. Geom. {\bf 15} (2007), no. 2, 249--282.
\bibitem{Wang-Yau1} M.-T. Wang and S.-T. Yau, \textit{Quasilocal mass in general relativity}, Phys. Rev. Lett. \textbf{102} (2009), no. 2, no. 021101.
\bibitem{Wang-Yau2} M.-T. Wang and S.-T. Yau, \textit{Isometric embeddings into the Minkowski space and new quasi-local mass}, Comm. Math. Phys. \textbf{288} (2009), no. 3, 919--942.
\bibitem{Wang-Yau3} M.-T. Wang and S.-T. Yau, \textit{Limit of quasilocal mass at spatial infinity}, Comm. Math. Phys. \textbf{296} (2010), no.1, 271--283. 
\bibitem{Wang-Xie-Zhang}Y. Wang, N. Xie,\ and\ X. Zhang, \textit{The positive energy theorem for asymptotically anti-de Sitter spacetimes}, Commun. Contemp. Math. {\bf 17} (2015), no.~4, 1550015, 24 pp.
\bibitem{Xie-Zhang} N. Xie\ and\ X. Zhang, \textit{Positive mass theorems for asymptotically AdS spacetimes with arbitrary cosmological constant}, Internat. J. Math. {\bf 19} (2008), no.~3, 285--302.
\end{thebibliography}
\end{document}